\documentclass[12pt]{amsart}
\usepackage{mlmodern}

\usepackage[utf8]{inputenc}
\usepackage[margin=1.2in]{geometry}
\usepackage{amsmath}
\usepackage{amsfonts}
\usepackage{mathtools}
\usepackage{float}
\usepackage{amsthm}
\usepackage{mathrsfs}
\usepackage{graphicx}
\usepackage[colorlinks=true, linkcolor=blue, citecolor=blue, urlcolor=blue, breaklinks=true]{hyperref}
\usepackage{thmtools}
\usepackage[capitalise]{cleveref}
\usepackage{tikz}
\usepackage{soul}
\usepackage{enumitem}

\newcommand\R{\mathbb{R}}

\newcommand{\s}{\mathfrak{S}}
\newcommand{\can}{\mathop{\mathsf{can}}}

\newcommand{\des}{\mathop{\mathsf{des}}}

\newcommand{\Des}{\mathop{\mathsf{Des}}}
\newcommand{\pk}{\mathop{\mathsf{pk}}}
\newcommand{\lpk}{\mathop{\mathsf{lpk}}}
\newcommand{\bpk}{\mathop{\mathsf{bpk}}}
\newcommand{\bounce}{\mathop{\mathsf{bounce}}}
\newcommand{\bperm}{\mathop{\mathsf{bperm}}}
\newcommand{\vperm}{\mathop{\mathsf{vperm}}}
\newcommand{\bcomp}{\mathop{\mathsf{bcomp}}}
\newcommand\cC{\mathcal{C}}

\newcommand{\CanDy}{\operatorname{CanDy}}
\newcommand{\Dyck}{\operatorname{Dyck}}

\newtheorem{theorem}{Theorem}[section]

\newtheorem{corollary}[theorem] {Corollary}
\newtheorem{definition}[theorem]{Definition}
\newtheorem{example}[theorem]{Example}
\newtheorem{lemma}[theorem]{Lemma}

\newtheorem{proposition}[theorem]{Proposition}
\newtheorem{remark}[theorem]{Remark}

\newtheorem*{utheorem}{Theorem}

\theoremstyle{definition}
\newtheorem{algorithm}[theorem]{Algorithm}

\title{How to bounce your canon permutation}

\author{Danai Deligeorgaki}
\address{\scriptsize Department of Mathematics, Universitat de Barcelona, Barcelona, Spain}
\email{deligeorgaki@ub.edu}

\author{Krishna Menon}
\address{\scriptsize Department of Mathematics, KTH Royal Institute of Technology, Stockholm, Sweden}
\email{puzhan@kth.se}

\date{\today}

\begin{document}

\begin{abstract}[old abstract]
    We study a new class of palindromic descent polynomials on multiset permutations. 
Given a Dyck path $d$ of semilength $n$ and a permutation $\sigma$ of size $n$, one can label the up-steps and down-steps of $d$ with the elements of $\sigma$. 
The labeled Dyck path gives rise to a multiset permutation called a {canon} (or nonnesting) permutation. 
Ranging over all Dyck paths for fixed $\sigma$  produces a collection of multiset permutations whose descent polynomial is a shift of the Narayana polynomial, while allowing both the Dyck path and the permutation to vary results in a descent polynomial that is a product of a Narayana and an Eulerian polynomial (Elizalde, 2024). 
In this article, we fix $d$ and range over all permutations of size $n$. 
We prove that the resulting descent polynomial is always palindromic, with its degree determined by the number of peaks in the bounce path of $d$. Our descent polynomials are not unimodal in general, but they are free from internal zeros.

In the process, we provide a method to traverse Dyck paths with the same bounce path by swapping valleys into peaks, while labeling their steps to maximize the number of descents. 

We further identify an unexpected connection between the canon permutations contributing to the leading coefficient of the descent polynomial and Dyck paths below 
$d$, yielding a sharp combinatorial lower bound via a natural compatibility condition on valleys.

\end{abstract}

\begin{abstract}
We study a new class of palindromic descent polynomials on multiset permutations. 
Given a Dyck path $d$ of semilength $n$ and a permutation $\sigma$ of size $n$, one can label the up-steps and down-steps of $d$ with the elements of $\sigma$. 
The labeled Dyck path gives rise to a multiset permutation called a {canon} (or nonnesting) permutation (Elizalde, 2024).

We consider the descent polynomial $C_d$ of canon permutations for fixed $d$, letting $\sigma$ vary over the permutations of size $n$. 
We show that $C_d$ is always palindromic, with degree determined by the number of peaks in the bounce path of 
$d$. Although $C_d$ is not unimodal in general, it is free from internal zeros.
We further establish a connection between the canon permutations contributing to the leading coefficient of $C_d$ and Dyck paths below $d$ in the Dyck lattice, yielding a sharp combinatorial lower bound for it.  Finally, we propose connections to known combinatorial sequences, including the Catalan numbers.

\textcolor{blue}{thinking about it again, a short sentence about this palindromicity phenomenon for canons etc could be nice}

\end{abstract}

\begin{abstract}

We study a new class of palindromic descent polynomials.
Given a Dyck path $d$ of semilength $n$ and a permutation $\sigma$ of $[n]:=\{1,\ldots,n\}$, one can label the up-steps and down-steps of $d$ with the elements of $\sigma$.
The labeled Dyck path gives rise to a multiset permutation called a {canon} (or nonnesting) permutation (Elizalde, 2024). These permutations come up as linear extensions as well as regions of certain hyperplane arrangements.
We consider the descent polynomial $C_d$ of canon permutations for fixed $d$, letting $\sigma$ vary over the permutations of size $n$.
Equivalently, $C_d$ is the descent polynomial of permutations of $[2n]$ whose odd and even entries correspond to up- and down-steps, respectively, and have the same relative order.
We show that $C_d$ is always palindromic, with degree determined by the number of peaks in the bounce path of $d$. 
This refines the palindromicity of the descent polynomial for all canon permutations (where $d$ and $\sigma$ can both vary) which is a product of an Eulerian and a Narayana polynomial. While $C_d$ is not unimodal in general, it is free from internal zeros. 
We also connect the canon permutations contributing to the leading coefficient of $C_d$ to Dyck paths below $d$ in the Dyck lattice, yielding a sharp combinatorial lower bound for it.  Finally, we propose connections to known combinatorial sequences, including the Catalan numbers.

\textcolor{blue}{I think last sentence unecessary, but we could rephrase and attach it with an "and" to the second to last sentence too. Need to shorten a bit I think :/}
\textcolor{red}{I've added another version below. I removed the odd,even $2n$ thing and restructured a bit. Parts we could delete I've marked in cyan (deletion would require slight rewording of other sentences). We could also break into two paragraphs: background + what we do.}

\end{abstract}

\begin{abstract}

We study a new class of palindromic descent polynomials. 
Given a Dyck path $d$ of semilength $n$ and a permutation $\sigma$ of size $n$, one can label the up-steps and down-steps of $d$ with the elements of $\sigma$. 
The labeled Dyck path gives rise to a multiset permutation called a {canon} (or nonnesting) permutation. 
{These permutations arise as linear extensions of posets as well as regions of hyperplane arrangements. }
The study of descents in canon permutations was initiated by Elizalde, {who showed that the descent polynomial for all canon permutations of a fixed length is a product of an Eulerian and a Narayana polynomial. }
We study a refinement of these polynomials.

To any Dyck path $d$, we associate the descent polynomial $C_d$ of all canon permutations obtained by labeling $d$. This polynomial can also be seen as the descent polynomial for a certain class of standard permutations. 
While $C_d$ is not unimodal in general, it is palindromic and free from internal zeros. 
We show that the degree of $C_d$ is determined by the number of peaks in the bounce path of $d$. 
We describe methods to generate canon permutations with this maximum number of descents. 
In particular, we associate maximizers to certain Dyck paths below $d$ in the Dyck lattice{, yielding a sharp combinatorial lower bound for the leading term in $C_d$}. 
{We end with several computational observations and directions for future research.}

\end{abstract}

\begin{abstract}

We study a new class of palindromic descent polynomials. 
Given a Dyck path $d$ of semilength $n$ and a permutation $\sigma$ of size $n$, one can label the up-steps and down-steps of $d$ with the elements of $\sigma$. 
The labeled Dyck path determines a multiset permutation called a {canon} (or nonnesting) permutation. Such permutations arise as linear extensions of posets and as regions of hyperplane arrangements.
Elizalde showed that the descent polynomial for all canon permutations of fixed length factors as a product of an Eulerian and a Narayana polynomial.

We refine these polynomials by associating to $d$ a descent polynomial $C_d$ over the canon permutations obtained from $d$.
 We prove that $C_d$
 is palindromic and free of internal zeros, though not unimodal in general.
Its degree is determined by the number of peaks in the bounce path of $d$. 
We establish a correspondence between canon permutations attaining the maximum number of descents and Dyck paths below $d$ in the Dyck lattice satisfying a valley condition. 
Each such path contributes a number of maximizers equal to the number of linear extensions of an associated poset, yielding a combinatorial interpretation of the leading coefficient of $C_d$.




\end{abstract}

\maketitle

\section{Introduction}
Canon permutations are a class of multiset permutations that exhibit rich combinatorial structure. 
For the multiset $M=\{1^m,2^m,\ldots,n^m\}$ with $m=2$, this class consists of exactly the multiset permutations that avoid the patterns $1221$ and $2112$. 
For general $m$, a canon permutation is a shuffle of $m$ copies of a permutation of size $n$ satisfying the following: the subsequence formed by the $j$-th copy of each element of $[n]:=\{1,2,\ldots,n\}$ is identical for all $j\in[m]$. 
For instance, $13123424$ is a canon permutation of $\{1^2,2^2,3^2,4^2\}$, with the sequence formed by the $j$-th copy being $1324$, whereas  $13123442$ is not a canon permutation. 
We denote by $\cC_n^{m}$ the set of all canon permutations for given $m,n$.

Although canon permutations (also called \textit{nonnesting} permutations for $m=2$) have been used in other contexts (for example, to label regions of hyperplane arrangements \cite{ber}), Sergi Elizalde \cite{sergi1} observed that their descent polynomial is of particular interest. Not only is it palindromic, but it further factorizes as a product of an Eulerian polynomial and a Narayana polynomial. This exceptional structure was later explained using standard Young tableaux by Elizalde in \cite{sergi2} and through labeled posets by Matthias Beck and Danai Deligeorgaki in \cite{danai}, where further generalizations were developed. 
Specifically, the descent polynomial corresponding to $\cC_n^{m}$ is the numerator of the rational generating function of the order polynomial for the ordinal sum of a product of two chains ($[m]\times [n]$) with an $n$-element antichain \cite{danai}.

The \emph{descent polynomial} for the canon permutations of the multiset $\{1^m,2^m,\ldots, n^m\}$ is denoted by \[C^{m}_n(t):=\sum_{\pi \in \cC_n^{m}}t^{\des(\pi)}, \]
where $j$ is a \emph{descent} of $\pi$ if $\pi(j+1) < \pi(j)$ and
$\des(\pi)$ is the number of descents of $\pi$. 
For $\sigma \in \s_n$, Elizalde \cite{sergi1} also considered the polynomial 
\[C^{m,\sigma}_n(t):=\sum_{\pi \in \cC_n^{m,\sigma}}t^{\des(\pi)},\] 
where $\cC_n^{m,\sigma}$ is the set of canon permutations such that the sequence formed by the $j$-th copy of each element of $[n]$ is $\sigma$. 
Narayana polynomials play a crucial role in this specialization. 
Let us denote by $N_n(t)$ the $n$-th Narayana polynomial, enumerating peaks in
Dyck paths. 
If we fix the underlying subsequence $\sigma$ to be the identity permutation,
$C^{2,\text{id}}_n(t)=N_n(t)$. 
More generally, for any $\sigma\in \s_n$, \[C^{2,\sigma}_n(t)=t^{\des(\sigma)}N_n(t).\] This was proved bijectively in \cite{sergi1} (case $m=2$) and generalized to all $m$ in \cite{sergi2} using standard Young tableaux. 
A shorter (but non-bijective) proof is also presented in \cite{danai} using $(P,\omega)$-partitions. 
The expression of $C^{2,\sigma}_n(t)$ as a shifted Narayana polynomial is fundamental for establishing distributional properties for $C^{2,\sigma}_n(t)$ (and, further, for $C^{2}_n(t)$). 
For instance, it follows that  $C^{2,\sigma}_n(t)$ is palindromic and its degree depends on $\sigma$. 

 In this article, we look at a different, natural restriction on canon permutations that is less understood, while focusing on the case $m=2$. In particular, we study canon permutations where the ``shuffle order'' is fixed and the subsequence $\sigma\in \s_n$ can vary. For example, $121323$ and $212313$ have the same shuffle order, since the first occurrences of the entries $\{1,2,3\}$ are in positions $1$, $2$ and $4$ in both permutations. In the case $m=2$, a shuffle ordering is captured by a Dyck path (see \cite[Section 3]{sergi1}, or alternatively \cite[Section 2]{danai}). 
Specifically, provided with a Dyck path $d$ of semilength $n $ and a permutation $\sigma \in \s_n$, we consider the canon permutation, denoted by $\can(d,\sigma)$, formed by labeling the up-steps (and the down-steps) of $d$ with the entries of $\sigma$, and then reading the permutation formed (from left to right). 
See Figure \ref{fig:canon123} for an example, and Section~\ref{sec: prel} for background on Dyck paths.
\bigskip
\begin{figure}[H]
    \centering
    \begin{tikzpicture}[scale=0.95]
      \draw[dotted] (0, 0) grid (6, 2);
      \draw[thick, color=blue] (0, 0) -- (1, 1) -- (2, 2) -- (3, 1) -- (4, 2) -- (5, 1) -- (6, 0) ;
        \node at (0 + 0.4, 0 + 0.75) {\scriptsize 1};
        \node at (1 + 0.4, 1 + 0.75) {\scriptsize 2};
        \node at (2 + 0.6, 2 - 0.25) {\scriptsize 1};
        \node at (3 + 0.4, 1 + 0.75) {\scriptsize 3};
        \node at (4 + 0.6, 2 - 0.25) {\scriptsize 2};
        \node at (5 + 0.6, 1 - 0.25) {\scriptsize 3};
                \node at (3, -0.5) {\scriptsize $\can(d, 123)=121323$};
    \end{tikzpicture} \qquad
 \begin{tikzpicture}[scale=0.95]
      \draw[dotted] (0, 0) grid (6, 2);
      \draw[thick, color=blue] (0, 0) -- (1, 1) -- (2, 2) -- (3, 1) -- (4, 2) -- (5, 1) -- (6, 0) ;
        \node at (0 + 0.4, 0 + 0.75) {\scriptsize 2};
        \node at (1 + 0.4, 1 + 0.75) {\scriptsize 1};
        \node at (2 + 0.6, 2 - 0.25) {\scriptsize 2};
        \node at (3 + 0.4, 1 + 0.75) {\scriptsize 3};
        \node at (4 + 0.6, 2 - 0.25) {\scriptsize 1};
        \node at (5 + 0.6, 1 - 0.25) {\scriptsize 3};
        \node at (3, -0.5) {\scriptsize $\can(d, 213)=212313$};
    \end{tikzpicture}
    
    \caption{Two canon permutations on the same Dyck path.}
    \label{fig:canon123}
\end{figure}
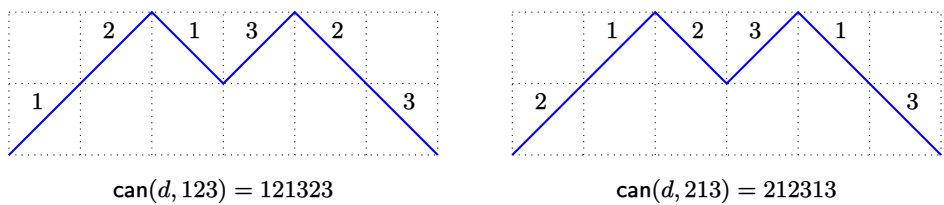

To a Dyck path $d$ of semilength $n$, we associate the polynomial
\begin{equation*}
    C_d(t) \coloneqq \sum_{\sigma \in \s_n} t^{\des (\can (d, \sigma))},
\end{equation*}
called the \emph{canon descent polynomial of $d$}. Summing over the set of Dyck paths of semilength $n$, we recover the set of canon permutations for $\{1^2, 2^2,\ldots, n^2 \}$, and 
\[  \sum_{d \in \Dyck_n} C_d(t) = C^{2}_n(t).\]

Our main contribution is the following theorem (summarizing Propositions \ref{sym} and \ref{niz}, Theorem \ref{md} and Corollary \ref{LEplusLB}).

\begin{utheorem}
  For any Dyck path $d$ of semilength $n$, the canon descent polynomial  of $d$ is
    \begin{enumerate}
      \item[a)] \textit{palindromic} with respect to degree $2n - 1 - a$, where $a$ is the number of \emph{low peaks of $d$},
      \item[b)] of degree $2n - 1 - b$, where $b$ is the number of peaks of the \emph{bounce path} of $d$, and
      \item[c)] free from internal zeros.
  \end{enumerate}
Moreover, its leading coefficient 
counts Dyck paths $b$ below $d$ satisfying a compatibility condition, 
each weighted by the number of linear extensions of a poset 
associated with  the pair 
$(b,d)$.
\end{utheorem}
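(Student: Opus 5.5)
The plan is to prove the four parts separately, reading everything off the word $w=\can(d,\sigma)$ of length $2n$. In $w$, the $k$-th up-step and the $k$-th down-step of $d$ both carry the letter $\sigma(k)$.

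\emph{Palindromicity.} I would use the involution $\sigma\mapsto\sigma^c$, where $\sigma^c(k)=n+1-\sigma(k)$. Complementation turns every adjacent pair of distinct letters of $w$ from a descent into an ascent, and vice versa. It leaves every adjacent pair of equal letters as a non-descent.
\begin{itemize}
\item Equal letters are adjacent in $w$ only when an up-step is immediately followed by a down-step carrying the same label.
\item At a peak, the up-step is the $u$-th up-step and the down-step is the $(u-h+1)$-th down-step, where $h$ is the height of the peak.
\item So the two labels agree exactly when $h=1$, i.e.\ at a low peak.
\end{itemize}
This gives $\des(\can(d,\sigma^c))=2n-1-a-\des(\can(d,\sigma))$. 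Hence $C_d(t)=t^{2n-1-a}C_d(1/t)$, which is part a).

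\emph{Degree.} I would split the positions $1,\dots,2n$ into the blocks cut out by the bounce path of $d$. The first block is the initial ascending run of $d$, of length $k_1$, together with the down-steps of $d$ that the bounce path traverses. The later blocks are defined recursively in the same way.

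For the upper bound, I would show that each of the $b$ bounce peaks forces at least one non-descent. The idea is that within a block the labels $\sigma(i)$, $i$ in the block, appear first as a run of up-letters and then again, in the same relative order, among the down-letters. A fully decreasing pattern across the junction is then impossible. Concretely, I expect one can show that the last up-letter of the block and the following letter (or the first down copy that follows) cannot form a descent, unless the block already contains an earlier ascent.

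For the lower bound, I would construct an explicit $\sigma$ that labels each bounce block with a decreasing run of values, with the blocks ordered so that every step between blocks is a descent. I would then check that this $\sigma$ has exactly $b$ non-descents.

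I expect this upper bound to be the main obstacle. The down-letters of a block interleave with the up-letters of the next block, so making "one forced non-descent per bounce peak" rigorous needs a careful choice of which adjacent pair to charge to each peak. My first attempt would be to charge the bounce peak to the pair at the corresponding touch point of $d$ with the bounce path, and argue by contradiction.

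\emph{No internal zeros.} By part a), it suffices to find a chain of permutations running from a minimiser of $\des$ to the maximiser constructed above, along which $\des$ changes by at most $1$ at each step. I would use swaps of the consecutive values $i$ and $i+1$ in $\sigma$. Such a swap only affects adjacent pairs of $w$ consisting of one copy of $i$ and one copy of $i+1$, and there are at most three such pairs. I would order the swaps, bubble-sort style and block by block along the bounce decomposition, so that at most one of these pairs actually flips at each step. If this proves too delicate, the fallback is to build, for each $j$ between the minimum and the maximum, an explicit $\sigma$ with $\des=j$. This would be done by gradually merging the decreasing runs of the maximiser.

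\emph{Leading coefficient.} The same analysis shows that a maximiser has non-descents exactly at the $b$ forced positions. I would record, for a given maximiser, which up/down steps of $d$ realise its ascents as the peaks and valleys of a Dyck path $b'$ lying weakly below $d$. The compatibility condition on valleys is then exactly what ensures that the recorded non-descents are forced and that there are no others.

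Conversely, fixing $b'$ turns "every remaining adjacent pair is a descent" into a set of order relations among the values $\sigma(1),\dots,\sigma(n)$. These relations are the cover relations of a poset $P_{b',d}$, and the maximisers mapping to $b'$ are in bijection with the linear extensions of $P_{b',d}$. Summing over all compatible $b'$ gives the stated formula for the leading coefficient.
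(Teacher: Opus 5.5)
Part a) is correct and is exactly the paper's complement argument. The degree, however, is not established, and your blocks are cut in the wrong place. If a block ends where the bounce path returns to the axis, the first block contains positions $1$ and $k_1+1$, both labeled $\sigma(1)$. So no block can carry a decreasing run, and ``each block decreasing, every step between blocks a descent'' would give $2n-1$ descents, contradicting the bound you want. The right cut (the paper's bounce factors) is immediately \emph{before} the down-step of $d$ under each bounce peak. Then each factor $F_{i+1}$ consists of the down-copies of the labels of the up-steps of $F_i$, interleaved with new up-steps. The target is a labeling whose only non-descents are the $\bpk d$ junctions between factors.

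You still owe such a labeling, and it is the real content of the lower bound, because $\delta_n$ fails in general. For $d=UUDUDUDD$, $\can(d,\delta_4)=43423121$ has $4$ descents while $m_d=5$. The paper builds the labeling with Algorithm~\ref{algo}, or with valley swaps. Alternatively, requiring $F_{i+1}$ to be decreasing is a total order on the labels occurring in $F_{i+1}$ (those of the up-steps of $F_i$ and of $F_{i+1}$) that is decreasing in index on each of these two sets. Consecutive constraints therefore agree on their overlap, and their union is acyclic.

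For the upper bound, charging a bounce peak to a fixed adjacent pair fails: for $d=UUDD$ and $\sigma=12$, the pair at the peak of $1212$ is $2,1$, a descent. What works is to let $a_i$ be the label on the down-step of $d$ under the $i$-th bounce peak. The stretch between the two copies of $a_i$ cannot be strictly decreasing, so it contains a non-descent. The up-copy of $a_{i+1}$ is the first up-step after the down-copy of $a_i$, so these stretches are disjoint. You have the first half of this idea, but the disjointness is exactly the interleaving issue you leave open.

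Part c) and the leading coefficient are also only sketched. Swapping the values $i,i+1$ flips every adjacent pair made of a copy of $i$ and a copy of $i+1$. Two such pairs of the same type make $\des$ jump by $2$: $\can(U^3D^3,123)=123123$ has one descent, but $\can(U^3D^3,213)=213213$ has three. That the swaps can always be ordered to avoid such jumps is the entire claim, and you give no argument for it (nor for the fallback). The paper instead inducts on $n$. It reduces to primitive $d$ and deletes the first up-step and its partner to get $d'$. It then lifts any $\sigma'$ with $\des(d',\sigma')=k$ to $\sigma$ with $\des(d,\sigma)=k+1$, by prepending $1$ (shifting the other values up) or $n$, according to whether $a_m<a_{m+1}$. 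The degree formula and $\bpk d'\le\bpk d$ then cover the whole range.

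For the leading coefficient, there are no fixed ``forced positions'': different maximizers have different non-descent sets, which is why one sums over paths. Three substantive steps are missing:
(i) the non-descents of a maximizer are the peaks of some Dyck path below $d$. The paper shows that the descending line from one non-descent and the ascending line into the next must meet; otherwise an up-step and its corresponding down-step lie between them and force an extra non-descent.
(ii) This path has every valley either a valley of $d$ or on the axis, which is the compatibility condition defining $B_d$. The paper proves this by a two-case contradiction using the decreasing runs in the bounce factors.
(iii) Every $b\in B_d$ actually occurs, by valley swapping from $\can(b,\delta_n)$ up to $d$.
Without (ii) and (iii), your ``compatibility condition'' only restates ``arises from some maximizer''.
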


One reason the distributional properties of the polynomials $C_d$ are interesting is because they refine the descent polynomial of canon permutations, which is the product of an Eulerian and a Narayana polynomial. As the theorem above states, palindromicity and the no-internal-zeros property are inherited, but unimodality and real-rootedness are not (\Cref{notuni}). 
The polynomials $C_d$ can also be thought of as descent polynomials of standard permutations. Given a Dyck path $d$ of semilength $n$, we label its up-steps (resp.\ down-steps) with the odd (resp.\ even) numbers in $[2n]$, with the same relative order in each parity. 
Now, $C_d$ is the resulting descent polynomial, ranging over all such permutations.

\subsection*{Summary}

We present our definitions along with some background on Dyck paths in Section \ref{sec: prel}. 
Then, in Section \ref{sec:symdeg} we prove the palindromicity of the polynomials $C_d$, and compute their degree. 
In particular, using the bounce path of $d$, we construct a labeling that maximizes descents over the canon permutations associated to $d$. 
A natural candidate for a descent-maximizer is the decreasing permutation $\delta_n:= n \cdots 21\in \s_n$, which turns out to be the unique maximizer for bounce paths. 
However, in Section \ref{sec: more distributional properties}, we show that $\delta_n$ is a descent maximizer if and only if the number of peaks of $d$ equals the number of peaks of its corresponding bounce path -- these paths are counted by \cite[\href{https://oeis.org/A1519}{A1519}]{oeis}. 
Section \ref{sec: more distributional properties} also contains the proof that $C_d$ is free from internal zeros. 

Sections \ref{sec:valleys} and \ref{sec: interpret leader} are concerned with identifying all descent-maximizers for $d$, i.e., interpreting the leading coefficient of $C_d$. 
In Section \ref{sec:valleys}, we provide a method to traverse Dyck paths with the same underlying bounce path, while labeling their steps to maximize descents. 
In Section \ref{sec: interpret leader}, we extend this method and associate a Dyck path under $d$ in the Dyck lattice to each descent-maximizer of $d$. 
This provides a lower bound for the leading coefficient of $C_d$. 
To compute the exact leading coefficient, we sum over all suitable Dyck paths, weighting each path by the number of linear extensions of a poset determined by its interaction with $d$. 
We end with \Cref{sec:future} where we propose future directions, including connections to known OEIS sequences.

\section{Preliminaries}\label{sec: prel}

A \emph{Dyck path} of semilength $n$ is a lattice path in $\R^2$ that starts at the origin $(0, 0)$, ends at $(2n, 0)$, has steps $U = (1, 1)$ (up-steps) or $D = (1, -1)$ (down-steps), and never falls below the $x$-axis. 
We use $\Dyck_n$ to denote the set of Dyck paths of semilength $n$. 
A Dyck path of semilength $6$ is shown in \Cref{fig:dyck}.

A \emph{peak} of a Dyck path is an up-step that is immediately followed by a down-step, i.e., an occurrence of $UD$. 
We denote the number of peaks in a Dyck path $d$ by  $\pk d$. 
We also use $\lpk d$ to denote the number of \emph{low peaks} of $d$, that is, the peaks whose steps touch the $x$-axis.

\begin{figure}[H]
    \centering
    \begin{tikzpicture}[scale=0.95]
      \draw[dotted] (0, 0) grid (12, 2);
      \draw[thick, color=blue] (0, 0) -- (1, 1) -- (2, 2) -- (3, 1) -- (4, 2) -- (5, 1) -- (6, 0) -- (7, 1) -- (8, 0) -- (9, 1) -- (10, 2) -- (11, 1) -- (12, 0);
    \end{tikzpicture}
    \caption{An element $d \in \Dyck_6$ with $\pk d = 4$ and $\lpk d = 1$.}
    \label{fig:dyck}
\end{figure}
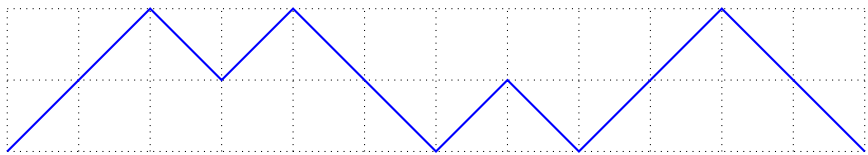

A crucial role among Dyck paths is played by the primitive paths, often used in path decompositions and recursive constructions.
A Dyck path of semilength $n$ is said to be \emph{primitive} if it only touches the $x$-axis at $(0, 0)$ and $(2n, 0)$. 
Any Dyck path can be broken into \emph{primitive factors} using the points where it touches the $x$-axis. 
For example, the Dyck path in \Cref{fig:dyck} has $3$ primitive factors: $UUDUDD$, $UD$, and $UUDD$.

This work focuses on a polynomial obtained by fixing a Dyck path and varying over a set of permutations. 
In particular, provided with $d \in \Dyck_n$ and a permutation $\sigma \in \s_n$, we will denote by $\can(d, \sigma)$ the canon permutation obtained by replacing the up-steps (similarly, down-steps) in $d$ with the terms of $\sigma$ in order. 
For instance, if $d$ is the Dyck path in \Cref{fig:dyck}, then $\can(d, 541623) = 545141662323$ as shown in \Cref{fig:canon}.

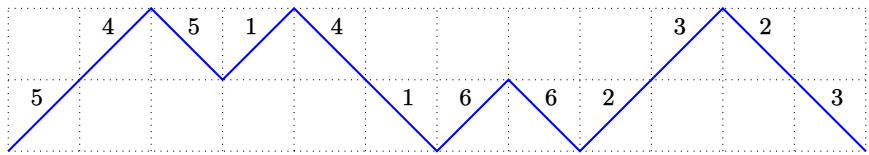
\begin{figure}[H]
    \centering
    \begin{tikzpicture}[scale=0.95]
      \draw[dotted] (0, 0) grid (12, 2);
      \draw[thick, color=blue] (0, 0) -- (1, 1) -- (2, 2) -- (3, 1) -- (4, 2) -- (5, 1) -- (6, 0) -- (7, 1) -- (8, 0) -- (9, 1) -- (10, 2) -- (11, 1) -- (12, 0);
        \node at (0 + 0.4, 0 + 0.75) {\scriptsize 5};
        \node at (1 + 0.4, 1 + 0.75) {\scriptsize 4};
        \node at (2 + 0.6, 2 - 0.25) {\scriptsize 5};
        \node at (3 + 0.4, 1 + 0.75) {\scriptsize 1};
        \node at (4 + 0.6, 2 - 0.25) {\scriptsize 4};
        \node at (5 + 0.6, 1 - 0.25) {\scriptsize 1};
        \node at (6 + 0.4, 0 + 0.75) {\scriptsize 6};
        \node at (7 + 0.6, 1 - 0.25) {\scriptsize 6};
        \node at (8 + 0.4, 0 + 0.75) {\scriptsize 2};
        \node at (9 + 0.4, 1 + 0.75) {\scriptsize 3};
        \node at (10 + 0.6, 2 - 0.25) {\scriptsize 2};
        \node at (11 + 0.6, 1 - 0.25) {\scriptsize 3};
    \end{tikzpicture}
    \caption{Labeling the steps of a Dyck path using the permutation $541623$.}
    \label{fig:canon}
\end{figure}

Notice that, for a fixed Dyck path $d$, there is a down-step {corresponding} to an up-step (or vice-versa) that shares the same label in $\can(d, \sigma)$, regardless of the choice of the permutation $\sigma$. We will take advantage of this property, referring to them as the \emph{corresponding} up or down-step.
In terms of the Dyck path, the down-step corresponding to the $i$-th up-step is simply the $i$-th down-step.

Our goal is to study statistics on canon permutations obtained for a fixed Dyck path, in particular their descent polynomials, following the line of research in \cite{danai,sergi1,sergi2}. 
For a (multiset) permutation $\pi = \pi_1\pi_2 \cdots \pi_k$, we define $\Des(\pi) = \{i \in [k - 1] \mid \pi_i > \pi_{i + 1}\}$, the descent set of $\pi$. 
We denote the number of descents in $\pi$ by $\des(\pi)$, i.e., $\des(\pi) = |\Des(\pi)|$. 
To simplify notation, to denote the number of descents of a canon permutation $\can(d, \sigma)$, we simply write $\des(d, \sigma)$ instead of $\des(\can(d, \sigma))$. 
Similarly, we use $\Des(d, \sigma)$ to denote $\Des(\can(d, \sigma))$.
We can now define our main objects of interest. 
To a Dyck path $d$ of semilength $n$, we associate the polynomial
\begin{equation*}
    C_d(t) \coloneqq \sum_{\sigma \in \s_n} t^{\des (d, \sigma)},
\end{equation*}
called the \emph{canon descent polynomial of $d$}.
For example, if $d = UUDUDD$, then varying $\sigma$ we obtain the canon permutations $121323,\ 131232,\ 212313,\ 232131,\ 313212,\ 323121$ and hence $C_d(t) = 3t^2 + 3t^3$. 
In what follows, our objective will be to obtain results about these polynomials.

\begin{remark}
    We do not keep track of plateaus in this polynomial (like in \cite{danai,sergi1,sergi2}) since plateaus of canon permutations only depend on the choice of Dyck path and hence factor out. In other words, the corresponding {weak descent (non-ascent) polynomial} for a fixed Dyck path is just a shift of $C_d(t)$.
\end{remark}

Some characteristics, such as the degree of $C_d$, are captured by a significant class of Dyck paths called bounce paths. 
For $d \in \Dyck_n$, its \emph{bounce path} is constructed as follows: 
Start at the point $(0, 0)$ and keep moving using up-steps until encountering a down-step of $d$. 
Then keep moving using down-steps until encountering the $x$-axis. 
Then, again, keep moving using up-steps until encountering a down-step of $d$, and repeat the process until $(2n, 0)$ is reached. 
We denote the bounce path of $d$ by $\bounce d$. 
An example is shown in \Cref{fig:bouncepath}.

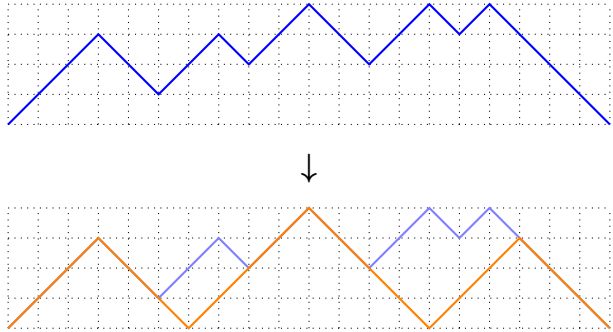
\begin{figure}[H]
    \centering
    \begin{tikzpicture}[scale = 0.4]
        \draw[dotted] (0, 0) grid (20, 4);
        \draw[thick, blue] (0, 0) -- (1, 1) -- (2, 2) -- (3, 3) -- (4, 2) -- (5, 1) -- (6, 2) -- (7, 3) -- (8, 2) -- (9, 3) -- (10, 4) -- (11, 3) -- (12, 2) -- (13, 3) -- (14, 4) -- (15, 3) -- (16, 4) -- (17, 3) -- (18, 2) -- (19, 1) -- (20, 0);
    \end{tikzpicture}\\[0.1cm]
    $\downarrow$\\[0.3cm]
    \begin{tikzpicture}[scale = 0.4]
        \draw[dotted] (0, 0) grid (20, 4);
        \draw[thick, blue!50] (0, 0) -- (1, 1) -- (2, 2) -- (3, 3) -- (4, 2) -- (5, 1) -- (6, 2) -- (7, 3) -- (8, 2) -- (9, 3) -- (10, 4) -- (11, 3) -- (12, 2) -- (13, 3) -- (14, 4) -- (15, 3) -- (16, 4) -- (17, 3) -- (18, 2) -- (19, 1) -- (20, 0);
        \draw[thick, orange] (0, 0) -- (3, 3) -- (6, 0) -- (10, 4) -- (14, 0) -- (17, 3) -- (20, 0);
    \end{tikzpicture}
    \caption{A Dyck path (\textcolor{blue}{top}) and its bounce path (\textcolor{orange}{bottom}).}
    \label{fig:bouncepath}
\end{figure}

Bounce paths were introduced by James Haglund in the study of $q,t$-Catalan numbers (see \cite[Chapter 3]{hanglundbounce}), and have since played a key role in understanding the structure and statistics of Dyck paths and other Catalan families.
They have the special property that they are
determined by a composition of $n$. This is just the sequence of heights of the peaks of the bounce path. In fact, we can attach such a composition to any Dyck path, by computing it for the corresponding bounce path. Given a Dyck path $d$, we denote this composition by $\bcomp d$. 
For example, if $d$ is the Dyck path in \Cref{fig:bouncepath}, then $\bcomp d = (3, 4, 3)$.


\section{Symmetry and degree}\label{sec:symdeg}

In this section, we show bijectively that the polynomial $C_d(t)$ is symmetric for any Dyck path $d$. We then compute the degree of  $C_d(t)$ using the number of peaks in the bounce path of $d$.

For a polynomial $f(t)$, we use $[t^i]f(t)$ to denote the coefficient of $t^i$ in $f(t)$.

\begin{proposition}\label{sym}
    Let $d \in \Dyck_n$ and $k = 2n - 1 - \lpk d$. 
    For any $i \in [0, k]$, we have $[t^i]C_d(t) = [t^{k- i}] C_d(t)$.
\end{proposition}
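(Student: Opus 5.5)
The plan is to exhibit an explicit involution on $\s_n$ that sends permutations with $i$ descents on $d$ to permutations with $k-i$ descents. The natural candidate is complementation: for $\sigma=\sigma_1\cdots\sigma_n$, set $\sigma^c=(n+1-\sigma_1)\cdots(n+1-\sigma_n)$. Since $\can(d,\sigma)$ only records which label sits on each step, the labeling is compatible with complementation entrywise: $\can(d,\sigma^c)$ is obtained from $\can(d,\sigma)$ by replacing every letter $x$ by $n+1-x$. Consequently, at each position $j\in[2n-1]$ of the word, three things hold. A strict descent of $\can(d,\sigma)$ becomes a strict ascent of $\can(d,\sigma^c)$. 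A strict ascent becomes a strict descent. A plateau $\pi_j=\pi_{j+1}$ stays a plateau.

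Hence, writing $p(d)$ for the number of plateaus of $\can(d,\sigma)$, we get
\[
\des(d,\sigma)+\des(d,\sigma^c)=2n-1-p(d).
\]
This uses the remark preceding the statement that $p(d)$ does not depend on $\sigma$; I will verify this directly in the next step. Since $\sigma\mapsto\sigma^c$ is a bijection of $\s_n$, the identity gives $[t^i]C_d(t)=[t^{k-i}]C_d(t)$ with $k=2n-1-p(d)$.

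The remaining step, and the only real content, is to show $p(d)=\lpk d$. Two consecutive letters of $\can(d,\sigma)$ are equal exactly when the two consecutive steps carry the same label. Because $\sigma$ has distinct entries, this happens exactly when the two steps are an up-step and its corresponding down-step, namely the $i$-th up-step and the $i$-th down-step for some $i$. This shows that plateaus are independent of $\sigma$.

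The order ``down then up'' is impossible, because the $i$-th up-step always precedes the $i$-th down-step in a Dyck path. So a plateau is an occurrence $UD$, i.e.\ a peak, in which the up-step is the $i$-th $U$ and the down-step is the $i$-th $D$. For a peak $UD$ whose $U$ is the $i$-th up-step, the $D$ is the $i$-th down-step exactly when the number of $D$'s before the peak equals the number of $U$'s before it. That condition says the peak starts at height $0$, i.e.\ it is a low peak. Thus $p(d)=\lpk d$ and $k=2n-1-\lpk d$, as claimed.

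I do not expect any serious obstacle; the only point needing care is this identification of plateaus with low peaks, which follows from the height-counting argument above. One may also check it on the example $d=UUDUDD$, which has no low peaks, so $k=5$ and $C_d=3t^2+3t^3$ is symmetric about $5/2$.
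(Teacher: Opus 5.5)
Your proposal is correct and follows the paper's proof: complementation gives $\des(d,\sigma)+\des(d,\sigma^c)=2n-1-\lpk d$ because every position is a descent of exactly one of $\can(d,\sigma)$, $\can(d,\sigma^c)$ unless it is a plateau, and plateaus are exactly the low peaks of $d$. The paper simply asserts that last identification. Your argument for it is sound: equal consecutive letters force the $i$-th up-step to be immediately followed by the $i$-th down-step, which happens exactly at a peak starting at height $0$.
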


\begin{proof}
    For a permutation $\sigma = \sigma_1 \cdots \sigma _n \in \s_n$, its \emph{complement} is the permutation $\sigma^c = (n + 1 - \sigma_1) \cdots (n + 1 - \sigma_n)$. 
    For any $\sigma \in \s_n$, 
    \[\des(d,\sigma) + \des(d,\sigma^c) = (2n - 1) - \lpk d.\]
    This is because each of the $(2n - 1)$ possible descent positions in the canon permutation $\can(d,\sigma)$ is exactly one of the following
    \begin{itemize}
        \item a descent in $\can(d, \sigma)$,

        \item a descent in $\can(d, \sigma^c)$, or

        \item a plateau: two consecutive terms being equal (low peaks of $d$).
    \end{itemize}
    The result now follows since the complement map is an involution on permutations.
\end{proof}

\begin{example}
    The two Dyck paths in \Cref{fig:symdycks} both have semilength $4$ and one low peak each.
    \begin{enumerate}
        \item For $d = UDUUUDDD$, we have 
        \begin{equation*}
            C_d(t) = t + 8t^2 + 6t^3 + 8t^4 + t^5.
        \end{equation*}
        Here, $[t^i]C_d(t) = [t^{6 - i}]C_d(t)$ for all $i \in [0, 6]$.
        
        \item For $d = UUDUDDUD$, we have 
        \begin{equation*}
            C_d(t) = 4t^2 + 16t^3 + 4t^4.
        \end{equation*}
        This polynomial also satisfies the symmetry condition. 
        Note, however, that it has a different degree than the previous one despite both Dyck paths having the same semilength and number of low peaks.
    \end{enumerate}
\end{example}

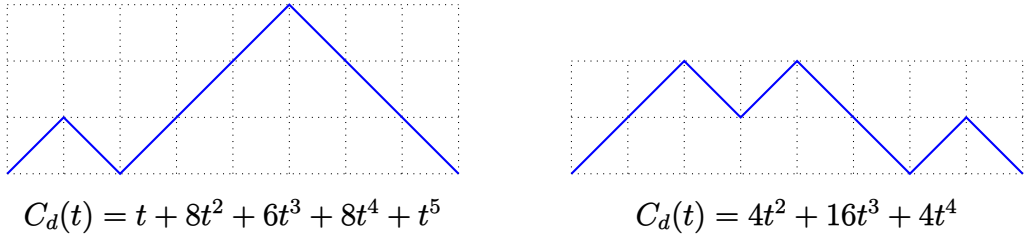
\begin{figure}[H]
    \centering
    \begin{tikzpicture}[scale =0.75]
        \draw[dotted] (0, 0) grid (8, 3);
        \draw[thick, blue] (0, 0) -- (1, 1) -- (2, 0) -- (3, 1) -- (4, 2) -- (5, 3) -- (6, 2) -- (7, 1) -- (8, 0);
        \node at (4, -0.75) {$C_d(t) = t + 8t^2 + 6t^3 + 8t^4 + t^5$};
        \begin{scope}[xshift = 10cm]
            \draw[dotted] (0, 0) grid (8, 2);
            \draw[thick, blue] (0, 0) -- (1, 1) -- (2, 2) -- (3, 1) -- (4, 2) -- (5, 1) -- (6, 0) -- (7, 1) -- (8, 0);
            \node at (4, -0.75) {$C_d(t) = 4t^2 + 16t^3 + 4t^4$};
        \end{scope}
    \end{tikzpicture}
    \caption{Two Dyck paths of semilength $4$ with one low peak each.}
    \label{fig:symdycks}
\end{figure}

\begin{remark}\label{notuni}
    Although we will see that the polynomials $C_d(t)$ are free from internal zeroes in \Cref{sec: more distributional properties}, they need not be real-rooted nor unimodal. 
    For $d = U^3D^3$,we have $C_d(t) = t + 2t^2 + 2t^3 + t^4$, which is not real rooted. 
    For $d = UDU^3D^3$, we have $C_d(t) = t + 8t^2 + 6t^3 + 8t^4 + t^5$, which is not unimodal.
\end{remark}

We now turn to computing the degree of $C_d(t)$.

\begin{definition}
    For any Dyck path $d$, we set $m_d \coloneqq \deg C_d(t)$.
\end{definition}
Hence, $m_d$ is the maximum possible number of descents that a canon permutation associated to $d$ can have. 
Since we will be using the peaks of $\bounce d$ to compute $m_d$, we use the following notation for convenience: 
For any Dyck path $d$, we use $\bpk d$ to denote the number of peaks in $\bounce d$. 
That is, $\bpk d = \pk(\bounce d)$.

\begin{lemma}\label{upperbound}
    For any $d \in \Dyck_n$, we have
    \begin{equation*}
        m_d \leq 2n - 1 - \bpk d.
    \end{equation*}
\end{lemma}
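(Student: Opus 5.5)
The plan is to show that for every $\sigma\in\s_n$, the word $\can(d,\sigma)$ has at least $\bpk d$ positions among the $2n-1$ adjacent pairs that are \emph{not} descents (ascents or plateaus). This gives $\des(d,\sigma)\le 2n-1-\bpk d$ for all $\sigma$, hence the bound on $m_d$. To do this, I will exhibit $\bpk d$ pairwise disjoint windows of consecutive positions in $\can(d,\sigma)$, each of which is forced to contain a non-descent.

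\textbf{Setup.} Let $\bcomp d=(h_1,\dots,h_r)$ with $r=\bpk d$, and put $H_0=0$ and $H_i=h_1+\cdots+h_i$. I will first translate the bounce construction into step counts. The bounce path returns to the $x$-axis after $H_{i-1}$ up-steps and $H_{i-1}$ down-steps. It then climbs until it meets a down-step of $d$, and that down-step is the $(H_{i-1}+1)$-th down-step of $d$. So the characterization I will verify is:
\begin{itemize}
\item the $(H_{i-1}+1)$-th down-step of $d$ is preceded by exactly $H_i$ up-steps of $d$;
\item equivalently, it lies after the $H_i$-th up-step and before the $(H_i+1)$-th up-step.
\end{itemize}

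\textbf{Windows.} For $i=1,\dots,r$, let $W_i$ be the segment of $\can(d,\sigma)$ running from the $(H_{i-1}+1)$-th up-step through the $(H_{i-1}+1)$-th down-step. The first entry of $W_i$ is $\sigma_{H_{i-1}+1}$, and so is its last entry, since a down-step carries the same label as its corresponding up-step. The segment has at least two entries. If every adjacent pair inside $W_i$ were a descent, the entries would strictly decrease from first to last. This would force $\sigma_{H_{i-1}+1}>\sigma_{H_{i-1}+1}$, which is impossible. Hence each $W_i$ contains at least one non-descent position. This holds regardless of which other steps, for instance down-steps of smaller index, are interleaved inside the window.

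\textbf{Disjointness.} The window $W_i$ ends at the $(H_{i-1}+1)$-th down-step. By the characterization above, that step precedes the $(H_i+1)$-th up-step, which is where $W_{i+1}$ begins. So the sets of adjacent pairs inside $W_1,\dots,W_r$ are pairwise disjoint. This yields at least $r=\bpk d$ non-descents, and therefore $\des(d,\sigma)\le 2n-1-\bpk d$.

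The main point requiring care is the translation of the geometric bounce construction into the step-count characterization in the setup. I would justify it by noting two facts. First, a point of $d$ at $x$-coordinate $2H_{i-1}+j$ with $j\le h_i$ lies weakly above the bounce path. Second, "meeting a down-step of $d$" at height $h_i$ means exactly that $d$'s next step there is a down-step, namely the $(H_{i-1}+1)$-th one, because before it $d$ has $H_i$ up-steps while its height equals $H_i-(\text{number of down-steps so far})$. Once this bookkeeping is in place, the remaining steps are immediate.
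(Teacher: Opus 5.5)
Your proof is correct and is essentially the paper's argument. Your windows $W_i$ are exactly the stretches between the two copies of $a_i=\sigma_{H_{i-1}+1}$ in the paper's subsequence $a_1a_1a_2a_2\cdots a_ka_k$, and your disjointness check (down-step $H_{i-1}+1$ precedes up-step $H_i+1$) is the paper's observation that the up-step labeled $a_{i+1}$ is the first up-step after the down-step labeled $a_i$; you just make the step-count bookkeeping more explicit.
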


\begin{proof}
    Let $\sigma \in \s_n$ and $k = \bpk d$. 
    We will show that $\can(d, \sigma)$ has a subsequence of the form $a_1a_1 a_2a_2 \cdots a_ka_k$. 
    This will prove the result since between any two equal terms, there must be a non-descent. 
    In fact, the peaks of the bounce path pick out the left-most such subsequence of $\can(d, \sigma)$.

    Let $a_i$ be the label on the down-step of $d$ that coincides with the down-step of the $i$-th peak of $\bounce d$. 
    For example, if $\sigma$ is the identity permutation and $d$ is as in \Cref{fig:upperbound}, then $a_1 = 1$, $a_2 = 5$, and $a_3 = 8$.

    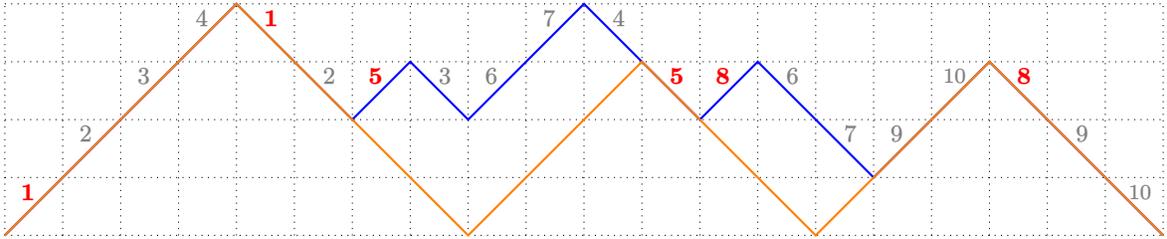
\begin{figure}[H]
        \centering
        \begin{tikzpicture}[scale = 0.77]
            \draw[dotted] (0, 0) grid (20, 4);
            \draw[thick, blue] (0, 0) -- (1, 1) -- (2, 2) -- (3, 3) -- (4, 4) -- (5, 3) -- (6, 2) -- (7, 3) -- (8, 2) -- (9, 3) -- (10, 4) -- (11, 3) -- (12, 2) -- (13, 3) -- (14, 2) -- (15, 1) -- (16, 2) -- (17, 3) -- (18, 2) -- (19, 1) -- (20, 0);
            \node at (0 + 0.4, 0 + 0.75) {\scriptsize \color{red}\textbf{1}};
            \node[text opacity = 0.5] at (1 + 0.4, 1 + 0.75) {\scriptsize 2};
            \node[text opacity = 0.5] at (2 + 0.4, 2 + 0.75) {\scriptsize 3};
            \node[text opacity = 0.5] at (3 + 0.4, 3 + 0.75) {\scriptsize 4};
            \node at (4 + 0.6, 4 - 0.25) {\scriptsize \color{red}\textbf{1}};
            \node[text opacity = 0.5] at (5 + 0.6, 3 - 0.25) {\scriptsize 2};
            \node at (6 + 0.4, 2 + 0.75) {\scriptsize \color{red}\textbf{5}};
            \node[text opacity = 0.5] at (7 + 0.6, 3 - 0.25) {\scriptsize 3};
            \node[text opacity = 0.5] at (8 + 0.4, 2 + 0.75) {\scriptsize 6};
            \node[text opacity = 0.5] at (9 + 0.4, 3 + 0.75) {\scriptsize 7};
            \node[text opacity = 0.5] at (10 + 0.6, 4 - 0.25) {\scriptsize 4};
            \node at (11 + 0.6, 3 - 0.25) {\scriptsize \color{red}\textbf{5}};
            \node at (12 + 0.4, 2 + 0.75) {\scriptsize \color{red}\textbf{8}};
            \node[text opacity = 0.5] at (13 + 0.6, 3 - 0.25) {\scriptsize 6};
            \node[text opacity = 0.5] at (14 + 0.6, 2 - 0.25) {\scriptsize 7};
            \node[text opacity = 0.5] at (15 + 0.4, 1 + 0.75) {\scriptsize 9};
            \node[text opacity = 0.5] at (16 + 0.4, 2 + 0.75) {\tiny 10};
            \node at (17 + 0.6, 3 - 0.25) {\scriptsize \color{red}\textbf{8}};
            \node[text opacity = 0.5] at (18 + 0.6, 2 - 0.25) {\scriptsize 9};
            \node[text opacity = 0.5] at (19 + 0.6, 1 - 0.25) {\tiny 10};

            \draw [thick, orange] (0, 0) -- (1, 1) -- (2, 2) -- (3, 3) -- (4, 4) -- (5, 3) -- (6, 2) -- (7, 1) -- (8, 0) -- (9, 1) -- (10, 2) -- (11, 3) -- (12, 2) -- (13, 1) -- (14, 0) -- (15, 1) -- (16, 2) -- (17, 3) -- (18, 2) -- (19, 1) -- (20, 0);
        \end{tikzpicture}
        \caption{Subsequence corresponding to peaks of the bounce path. \textcolor{blue}{On top}, the Dyck path $d$, with the identity permutation canon labeling. \textcolor{orange}{Under it}, the bounce path $\bounce d$. The \textcolor{red}{selected} numbers on $d$ come from the first up-step and down-step of each component in $\bounce d$. The subsequence of $\can(d, \sigma)$ with $k=3$ plateaus formed is $115588$.} 
        \label{fig:upperbound}
    \end{figure}

    We claim that $a_1a_1 a_2a_2 \cdots a_ka_k$ is a subsequence of $\can(d, \sigma)$. 
    We just have to show that for any $i \in [k - 1]$, the up-step labeled $a_{i + 1}$ appears after the down-step labeled $a_i$. 
    This follows from the definition of the bounce path. 
    In fact, for any $i \in [k - 1]$, the up-step labeled $a_{i + 1}$ is the \emph{first} up-step after the down-step labeled $a_i$.
\end{proof}

We will prove that the upper bound in Lemma \ref{upperbound} is in fact the value of $m_d$. 
To do this, we provide an algorithm that constructs a canon permutation with the required number of descents. 
That is, we start with a Dyck path $d$ and construct a canon permutation $\can(d, \sigma)$ such that $\des(d, \sigma) = 2n - 1 - \bpk d$.

The idea of this algorithm is as follows. 
For ordinary permutations, the decreasing permutation $\sigma = n(n-1)\cdots 1$ maximizes the number of descents. 
It is easy to verify that the decreasing permutation does not always maximize descents in the setting of canon permutations. 
Nevertheless, our approach to constructing a canon permutation with the maximum number of descents aims to mimic the behavior of a decreasing permutation. 
The guiding principle is to start labeling the steps of the Dyck path and attempt to decrease the label by $1$ as we move from one step to the next. 
The algorithm is described below.

\begin{algorithm}\label{algo}
    Let $d$ be a Dyck path of semilength $n$. 
    Label the first up-step and its corresponding (i.e., the first) down-step with $n$. 
    Repeat the following until all steps of $d$ have been labeled:
    \begin{itemize}
        \item Find the first unlabeled step in $d$ (this will be an up-step). 
        Suppose that $i$ is the label of the step immediately before it.
        \item If there is no step that already has the label $i - 1$, then label the first unlabeled step and its corresponding down-step with $i - 1$.
        \item If there are already steps labeled $i - 1$, then reduce by $1$ all existing labels whose value is at most $i - 1$. 
        Now label the first unlabeled step and its corresponding down-step with $i - 1$.
    \end{itemize}
\end{algorithm}

Essentially, we give precedence to giving the first unlabeled step a label exactly $1$ less than the step before it and we change the existing labels that would make this an invalid canon labeling. 
It is not difficult to see that the algorithm does indeed give a canon labeling of the Dyck path. 
An example of a step in this algorithm is shown in \Cref{fig:algorithmstep}.

\begin{figure}[H]
    \centering
    \begin{tikzpicture}[scale = 0.8]
        \node[opacity = 0] at (0, 3) {8};
        \node[opacity = 0] at (14, 0) {8};
        \draw[dotted] (0, 0) grid (14, 3);
        \draw[thick, blue] (0, 0) -- (1, 1) -- (2, 2) -- (3, 3) -- (4, 2) -- (5, 1) -- (6, 2) -- (7, 1) -- (8, 2) -- (9, 1) -- (10, 2) -- (11, 3) -- (12, 2) -- (13, 1) -- (14, 0);
        \node at (0 + 0.4, 0 + 0.75) {\scriptsize 7};
        \node at (1 + 0.4, 1 + 0.75) {\scriptsize 6};
        \node at (2 + 0.4, 2 + 0.75) {\scriptsize 4};
        \node at (3 + 0.6, 3 - 0.25) {\scriptsize 7};
        \node at (4 + 0.6, 2 - 0.25) {\scriptsize 6};
        \node at (5 + 0.4, 1 + 0.75) {\scriptsize 5};
        \node at (6 + 0.6, 2 - 0.25) {\scriptsize 4};
        \node at (7 + 0.4, 1 + 0.75) {\scriptsize 3};
        \node at (8 + 0.6, 2 - 0.25) {\scriptsize 5};
        \node at (11 + 0.6, 3 - 0.25) {\scriptsize 3};
    \end{tikzpicture}\\[-0.2cm]
    $\downarrow$\\
    \begin{tikzpicture}[scale = 0.8]
        \node[opacity = 0] at (0, 3) {8};
        \node[opacity = 0] at (14, 0) {8};
        \draw[dotted] (0, 0) grid (14, 3);
        \draw[thick, blue] (0, 0) -- (1, 1) -- (2, 2) -- (3, 3) -- (4, 2) -- (5, 1) -- (6, 2) -- (7, 1) -- (8, 2) -- (9, 1) -- (10, 2) -- (11, 3) -- (12, 2) -- (13, 1) -- (14, 0);
        \node at (0 + 0.4, 0 + 0.75) {\scriptsize 7};
        \node at (1 + 0.4, 1 + 0.75) {\scriptsize 6};
        \node[circle, draw = red, inner sep = 1pt] at (2 + 0.4, 2 + 0.75) {\scriptsize 4};
        \node at (3 + 0.6, 3 - 0.25) {\scriptsize 7};
        \node at (4 + 0.6, 2 - 0.25) {\scriptsize 6};
        \node at (5 + 0.4, 1 + 0.75) {\scriptsize 5};
        \node[circle, draw = red, inner sep = 1pt] at (6 + 0.6, 2 - 0.25) {\scriptsize 4};
        \node[circle, draw = red, inner sep = 1pt] at (7 + 0.4, 1 + 0.75) {\scriptsize 3};
        \node at (8 + 0.6, 2 - 0.25) {\scriptsize 5};
        \node at (9 + 0.4, 1 + 0.75) {\scriptsize \color{red}\textbf{4}};
        \node[circle, draw = red, inner sep = 1pt] at (11 + 0.6, 3 - 0.25) {\scriptsize 3};
        \node at (12 + 0.6, 2 - 0.25) {\scriptsize \color{red}\textbf{4}};
    \end{tikzpicture}\\[-0.2cm]
    $\downarrow$\\
    \begin{tikzpicture}[scale = 0.8]
        \node[opacity = 0] at (0, 3) {8};
        \node[opacity = 0] at (14, 0) {8};
        \draw[dotted] (0, 0) grid (14, 3);
        \draw[thick, blue] (0, 0) -- (1, 1) -- (2, 2) -- (3, 3) -- (4, 2) -- (5, 1) -- (6, 2) -- (7, 1) -- (8, 2) -- (9, 1) -- (10, 2) -- (11, 3) -- (12, 2) -- (13, 1) -- (14, 0);
        \node at (0 + 0.4, 0 + 0.75) {\scriptsize 7};
        \node at (1 + 0.4, 1 + 0.75) {\scriptsize 6};
        \node at (2 + 0.4, 2 + 0.75) {\scriptsize 3};
        \node at (3 + 0.6, 3 - 0.25) {\scriptsize 7};
        \node at (4 + 0.6, 2 - 0.25) {\scriptsize 6};
        \node at (5 + 0.4, 1 + 0.75) {\scriptsize 5};
        \node at (6 + 0.6, 2 - 0.25) {\scriptsize 3};
        \node at (7 + 0.4, 1 + 0.75) {\scriptsize 2};
        \node at (8 + 0.6, 2 - 0.25) {\scriptsize 5};
        \node at (9 + 0.4, 1 + 0.75) {\scriptsize 4};
        \node at (11 + 0.6, 3 - 0.25) {\scriptsize 2};
        \node at (12 + 0.6, 2 - 0.25) {\scriptsize 4};
    \end{tikzpicture}
    \caption{A step of \Cref{algo}.}
    \label{fig:algorithmstep}
\end{figure}

\begin{definition}
    For $d \in \Dyck_n$, we set $\bperm d$ to be the permutation in $\s_n$ such that $\can(d, \bperm d)$ is the canon permutation obtained when \Cref{algo} is applied to $d$.
\end{definition}
SageMath \cite{sagemath} code implementing \Cref{algo} to return $\bperm d$ can be found at
\begin{center}
    \href{https://github.com/KrishnaMenon1998/Bouncing-canons}{\texttt{https://github.com/KrishnaMenon1998/Bouncing-canons}}.
\end{center}

\begin{example}
    Completing \Cref{algo} on the Dyck path $d$ shown in \Cref{fig:algorithmstep} will result in the labeling shown in \Cref{fig:completedalgo}. 
    Hence $\bperm d = 7625143$.
\end{example}

\begin{figure}[H]
    \centering
    \begin{tikzpicture}[scale = 0.8]
        \node[opacity = 0] at (0, 3) {8};
        \node[opacity = 0] at (14, 0) {8};
        \draw[dotted] (0, 0) grid (14, 3);
        \draw[thick, blue] (0, 0) -- (1, 1) -- (2, 2) -- (3, 3) -- (4, 2) -- (5, 1) -- (6, 2) -- (7, 1) -- (8, 2) -- (9, 1) -- (10, 2) -- (11, 3) -- (12, 2) -- (13, 1) -- (14, 0);
        \node at (0 + 0.4, 0 + 0.75) {\scriptsize 7};
        \node at (1 + 0.4, 1 + 0.75) {\scriptsize 6};
        \node at (2 + 0.4, 2 + 0.75) {\scriptsize 2};
        \node at (3 + 0.6, 3 - 0.25) {\scriptsize 7};
        \node at (4 + 0.6, 2 - 0.25) {\scriptsize 6};
        \node at (5 + 0.4, 1 + 0.75) {\scriptsize 5};
        \node at (6 + 0.6, 2 - 0.25) {\scriptsize 2};
        \node at (7 + 0.4, 1 + 0.75) {\scriptsize 1};
        \node at (8 + 0.6, 2 - 0.25) {\scriptsize 5};
        \node at (9 + 0.4, 1 + 0.75) {\scriptsize 4};
        \node at (10 + 0.4, 2 + 0.75) {\scriptsize 3};
        \node at (11 + 0.6, 3 - 0.25) {\scriptsize 1};
        \node at (12 + 0.6, 2 - 0.25) {\scriptsize 4};
        \node at (13 + 0.6, 1 - 0.25) {\scriptsize 3};
    \end{tikzpicture}
    \caption{A Dyck path $d$ labeled with $\bperm d$.}
    \label{fig:completedalgo}
\end{figure}
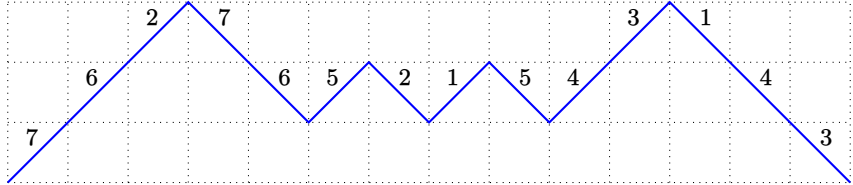

The notation `$\bperm$' is due to the fact that this labeling of the Dyck path will allow us to prove the following `bounce' expression for $m_d$. 
We also mention that this expression for $m_d$ was conjectured by FindStat \cite{FindStat}.

\begin{theorem}\label{md}
    For $d \in \Dyck_n$, we have
    \begin{equation*}
        m_d = 2n - 1 - \bpk d.
    \end{equation*}
\end{theorem}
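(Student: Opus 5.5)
By \Cref{upperbound} we already have $m_d \le 2n-1-\bpk d$, so it remains to exhibit one $\sigma$ with $\des(d,\sigma) = 2n-1-\bpk d$. I will take $\sigma = \bperm d$ from \Cref{algo} and show that $\can(d,\bperm d)$ has exactly $\bpk d$ non-descents. The proof of \Cref{upperbound} shows there are at least that many. So it suffices to show that every non-descent sits at the first down-step of a peak of $\bounce d$, i.e.\ at one of the plateaus $a_ia_i$ identified there.

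\textbf{Step 1: invariant of the algorithm.} When labels $\le i-1$ are lowered by $1$ and the new step receives $i-1$, the relative order of all previously placed labels is preserved. The new label is inserted immediately below the label $i$ of the preceding step and above everything that was below $i$. Since relative order is all that matters for descents, I will work with this ``insertion'' description: each new up-step is placed in the total order of labels directly below the label of the step preceding it.

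\textbf{Step 2: positions before up-steps.} It follows at once that every position immediately followed by an up-step is a descent in the final word, since the new label is strictly smaller than the preceding one and later relabelings keep this. So all non-descents occur at positions followed by a down-step, i.e.\ at the pairs $UD$ (peaks) and $DD$.

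\textbf{Step 3: positions before down-steps.} This is the main obstacle. Consider a down-step $D_j$ (the $j$-th down-step) with corresponding up-step $U_j$, and let $s$ be the step just before $D_j$. I need $\mathrm{label}(s) > \mathrm{label}(U_j)$ except when $D_j$ is the first down-step of a bounce peak, where $s = D_{j-1}$ or $s$ is a peak with the same label.

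I would argue block by block along $\bounce d$. Let the $r$-th bounce block consist of the up-steps $U_{c+1},\dots,U_{c+h}$ with $h = \bcomp d_r$. Within the algorithm, all up-steps of block $r$ are inserted below something labeled in block $r$ or earlier, and down-steps of block $r$ are exactly $D_{c+1},\dots,D_{c+h}$. By induction on the order of insertion, I would prove that within each block the up-step labels are decreasing in $j$: $U_{c+1} > U_{c+2} > \cdots > U_{c+h}$ in the final order. Moreover, any up-step occurring in the path between $U_{c+j}$ and $D_{c+j}$ is inserted below a label that is at least $\mathrm{label}(U_{c+j+1})$ or belongs to a later chain, hence lies above $\mathrm{label}(U_{c+j})$ in the relevant comparison.

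Concretely, I expect the cleanest formulation to be the following claim. For $j \ge 2$ within a block, the step preceding $D_{c+j}$ is either $D_{c+j-1}$, whose label exceeds $\mathrm{label}(U_{c+j})$ by decreasingness, or an up-step inserted after $U_{c+j}$ immediately below some label $\ge \mathrm{label}(U_{c+j-1})$. Either way the position is a descent. For $j = 1$ the preceding step is the first peak's up-step of the block; it is a plateau (the $a_ra_r$ from \Cref{upperbound}) or, after checking, an ascent, and it is counted once per block.

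I would verify the delicate insertion comparisons on the example of \Cref{fig:completedalgo} before writing them out in general. Counting one non-descent per block gives exactly $\bpk d$ non-descents, hence $\des(d,\bperm d) = 2n-1-\bpk d$, and combined with \Cref{upperbound} this proves $m_d = 2n-1-\bpk d$.
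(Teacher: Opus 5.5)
Your outline matches the paper's: the upper bound from \Cref{upperbound}, the labeling $\bperm d$, the reading of \Cref{algo} as ``insert the new label immediately below the label of the preceding step'', and the fact that every position followed by an up-step is a descent. There is a gap, however. Step~3, which you correctly flag as the main obstacle, is never proved, and the concrete claim you offer for it is false.

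Take the path of \Cref{fig:completedalgo}, $d=U^3D^2UDUDU^2D^3$ with $\bperm d=7625143$ and $\bcomp d=(3,2,2)$. Here $U_4,U_5,U_6,U_7$ carry the labels $5,1,4,3$. In your second block ($c=3$, $j=2$) the step before $D_5$ is $U_7$. It is inserted immediately below the label $4$ of $U_6$, whereas the label of $U_{c+j-1}=U_4$ is $5$. The descent $3>1$ does hold, but not for the reason you give.

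Your other ingredient, that the up-step labels within a block decrease, is only announced ``by induction on the order of insertion''. It is not automatic: down-steps of the previous block can sit between consecutive up-steps of a block, and handling them requires exactly the comparison that Step~3 is supposed to supply. A minor point: the step before $D_{c+1}$ need not be an up-step. For the third block above it is $D_5$, though this is harmless for the count.

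The missing idea is to induct over the paper's bounce factors, which are contiguous segments of $d$. Your blocks instead pair the up-steps of one factor with their down-steps in the next. The induction runs as follows.
\begin{enumerate}
\item Suppose factor $r$ is labeled decreasingly. Because \Cref{algo} labels up-steps from left to right, by the time the first up-step of factor $r+1$ is labeled, every down-step of factor $r+1$ already carries a label copied from an up-step of factor $r$. Hence these down-step labels decrease.
\item Factor $r+1$ begins with a down-step. So every maximal run of up-steps in it is preceded by one of these down-steps, say $D_{c+i}$, and is inserted as a descending chain starting immediately below the label of $D_{c+i}$.
\item The smaller label of $D_{c+i+1}$ is already present, since $U_{c+i+1}$ lies in factor $r$. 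So the entire run lands strictly between the labels of $D_{c+i}$ and $D_{c+i+1}$. A final run after the last down-step only needs to be descending.
\item Relabeling never changes relative order, so factor $r+1$ ends up decreasing.
\end{enumerate}

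This is the paper's second observation in the proof of \Cref{md}. It shows that non-descents can occur only at the $\bpk d$ boundaries between factors, i.e.\ immediately before the $D_{c+1}$'s, which is exactly what your count needs. In the example, the run $U_6U_7$ lies between $D_4$ and $D_5$, labeled $5$ and $1$; that is the real reason for $3>1$.
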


Before presenting the proof, we make the following definition which will also be used in the sequel.

\begin{definition}
    The peaks of $\bounce d$ break $d$ into \emph{bounce factors}. 
    The first bounce factor consists of all steps before the down-step of the first peak of $\bounce d$ (which is also a down-step of $d$). 
    The second bounce factor consists of all steps before the down-step of the second peak of $\bounce d$, excluding those in the first bounce factor. 
    The other factors are defined similarly (see \Cref{fig:bouncefactors}).
\end{definition}

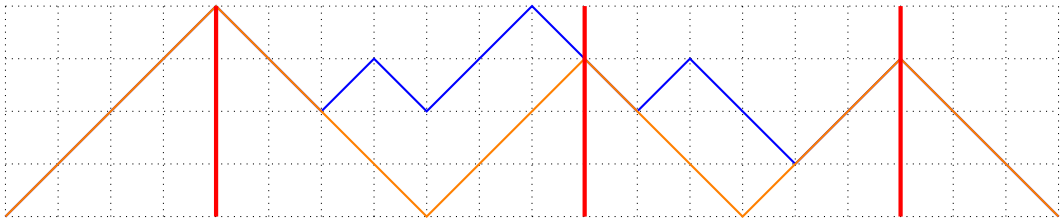
\begin{figure}[H]
    \centering
    \begin{tikzpicture}[scale = 0.7]
        \draw[dotted] (0, 0) grid (20, 4);
        \draw[thick, blue] (0, 0) -- (1, 1) -- (2, 2) -- (3, 3) -- (4, 4) -- (5, 3) -- (6, 2) -- (7, 3) -- (8, 2) -- (9, 3) -- (10, 4) -- (11, 3) -- (12, 2) -- (13, 3) -- (14, 2) -- (15, 1) -- (16, 2) -- (17, 3) -- (18, 2) -- (19, 1) -- (20, 0);

        \draw[thick, orange] (0, 0) -- (1, 1) -- (2, 2) -- (3, 3) -- (4, 4) -- (5, 3) -- (6, 2) -- (7, 1) -- (8, 0) -- (9, 1) -- (10, 2) -- (11, 3) -- (12, 2) -- (13, 1) -- (14, 0) -- (15, 1) -- (16, 2) -- (17, 3) -- (18, 2) -- (19, 1) -- (20, 0);

        \draw[ultra thick, red] (4, 4) -- (4, 0);
        \draw[ultra thick, red] (11, 4) -- (11, 0);
        \draw[ultra thick, red] (17, 4) -- (17, 0);
    \end{tikzpicture}
    \caption{The \textcolor{red}{vertical lines} decompose the Dyck path into its bounce factors.}
    \label{fig:bouncefactors}
\end{figure}

\begin{proof}[Proof of \Cref{md}]
    We will show that $\des (d, \bperm d) = 2n - 1 - \bpk d$. 
    The key observations that will help us prove the result are the following.
    \begin{enumerate}
        \item At any time during \Cref{algo}, if the labels on a subsequence of steps of $d$ form a decreasing sequence, it will remain decreasing throughout the rest of the algorithm. \label{obs1}

        \item At any time during \Cref{algo}, if there exist two down-steps that only have unlabeled up-steps between them and with the first down-step having a larger label than the second, then the part of the Dyck path between these two down-steps (both inclusive) will get a decreasing sequence of labels. \label{obs2}
    \end{enumerate}
    
    We will use these observations to show that when the algorithm ends, in each bounce factor, the labels form a decreasing sequence. 
    This will show that $\des(d, \bperm d) \geq 2n - 1 - \bpk d$, which along with \Cref{upperbound} proves the result.

    By the definition of the algorithm, the first bounce factor will initially be labeled with a decreasing sequence and hence, by observation \ref{obs1} will remain decreasing. 
    Also, once we have labeled all the (up) steps in the first factor, by the rules of the algorithm, we also label all the down-steps in the second factor. 
    Hence, the labels on the down-steps in the second factor form a decreasing sequence.
    
    We now describe why the labels on the \emph{entire} second factor form a decreasing sequence. 
    If all the down-steps appear before all the up-steps in the second factor, this follows from the rules of the algorithm. 
    If not, this follows by repeatedly applying observation \ref{obs2} and noting that the down-steps in the second factor have decreasing labels. 
    The same reasoning can be extended to show that the remaining factors all end up with labels forming decreasing sequences.
\end{proof}

\begin{remark}\label{bpk=ht}
    The distribution of the statistic `number of peaks of the bounce path' on Dyck paths (i.e., $d \mapsto \bpk d$) is equidistributed with the classical `height' statistic. 
    One way to prove this is to note that the Dyck paths $d$ with $\bcomp d = (c_1, \ldots, c_k)$ are in bijection with Dyck paths that have exactly $c_i$ up-steps at height $i$. 
    Hence, the distribution of the statistic `maximum number of descents in a canon labeling' (i.e., $d \mapsto m_d$) is given by \cite[\href{https://oeis.org/A80936}{A80936}]{oeis}.
\end{remark}

\begin{example}\label{eg: non-unique maximazers}
    For $d = UUDUDDUD$, we have $C_d(t) = 4t^2 + 16t^3 + 4t^4$ (see \Cref{fig:symdycks}). 
    Since this Dyck path has semilength $4$ and $\bpk d = 3$, we get that $m_d = 2\cdot 4 - 1 - 3 = 4$, as suggested by $C_d(t)$. 
    Note that $[t^4]C_d(t) \neq 1$, that is, there are permutations other than $\bperm d$ that maximize the number of descents. 
    For example, in this case $\bperm d = 4132$, and we have $\des(d, \sigma) = 4$ for all $\sigma \in \{3421, 4132, 4231, 4321\}$.
\end{example}

In \Cref{sec:valleys}, we will see another method to generate a permutation $\sigma$ such that $\des(d, \sigma) = m_d$. 
This will involve swapping valleys to change from $\bounce d$ to $d$. 
We then extend this method in \Cref{sec: interpret leader} to generate more such permutations, and eventually describe a method to generate \emph{all} permutations that maximize the number of descents. 
This involves a generalized notion of bounce paths and linear extensions of posets.

\section{More distributional properties}\label{sec: more distributional properties}

In this section, we collect some consequences of the results in the previous section, and use them to get new insights on $C_d$. 
The first immediate corollary is the minimum number of descents that a canon permutation associated to a Dyck path can have.

\begin{corollary}\label{mindes}
    For any $d \in \Dyck_n$, we have
    \begin{equation*}
        \min\{\des(d, \sigma) \mid \sigma \in \s_n\} = \bpk d - \lpk d.
    \end{equation*}
\end{corollary}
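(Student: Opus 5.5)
The plan is to combine the palindromicity of \Cref{sym} with the degree computation of \Cref{md}, using the complement involution.

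Set $k = 2n - 1 - \lpk d$. The proof of \Cref{sym} gives, for every $\sigma \in \s_n$,
\[
\des(d,\sigma) + \des(d,\sigma^c) = k .
\]
Because $\sigma \mapsto \sigma^c$ is a bijection of $\s_n$, minimizing $\des(d,\sigma)$ is the same as maximizing $\des(d,\sigma^c)$. Hence
\[
\min_{\sigma}\des(d,\sigma) = k - \max_{\sigma}\des(d,\sigma) = k - m_d .
\]

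Next, substitute $m_d = 2n - 1 - \bpk d$ from \Cref{md}. This gives
\[
(2n-1-\lpk d) - (2n-1-\bpk d) = \bpk d - \lpk d .
\]

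The same conclusion can be phrased through coefficients. By \Cref{sym}, $[t^i]C_d = [t^{k-i}]C_d$ for all $i$. The largest $i$ with a nonzero coefficient is $m_d$, so the smallest one is $k - m_d$.

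There is no real obstacle, since the only nontrivial input is \Cref{md}, which we are allowed to assume. The one point to check is that the identity $\des(d,\sigma) + \des(d,\sigma^c) = k$ holds for every $\sigma$, not merely in distribution. This is immediate from the trichotomy in the proof of \Cref{sym}: each of the $2n-1$ adjacent positions of $\can(d,\sigma)$ is a descent of $\can(d,\sigma)$, a descent of $\can(d,\sigma^c)$, or a plateau, and the plateaus are exactly the low peaks of $d$.
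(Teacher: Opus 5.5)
Your proof is correct and takes the same approach as the paper. The paper's proof is the one-line remark that the corollary follows immediately from \Cref{sym} and \Cref{md}. Your argument spells this out: the complement involution gives a minimum of $(2n-1-\lpk d) - m_d$, and you then substitute $m_d = 2n-1-\bpk d$.
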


\begin{proof}
    This is an immediate consequence of \Cref{sym,md}.
\end{proof}
It immediately follows that the constant term of $C_d$ is $0$, unless $d$ is the Dyck path $(UD)^n$ (which realizes the unique non-decreasing canon permutation $1122 \cdots nn$).

\begin{definition}
    For a Dyck path $d$ of semilength $n$, we set
    \begin{equation*}
        M_d \coloneqq \{\sigma \in \s_n \mid \des(d, \sigma) = m_d\}.
    \end{equation*}
\end{definition}
Hence, $M_d$ is the set of permutations that can be used to label $d$ to obtain canon permutations with the maximum possible number of descents. 
\Cref{algo} chooses a permutation in $M_d$, which we denote $\bperm d$, for each Dyck path $d$. 

As mentioned before, the decreasing permutation need not always be in $M_d$. 
We now characterize the Dyck paths for which this is the case. 
We use $\delta_n$ to denote the decreasing permutation $n(n-1)\cdots 1$ in $\s_n$.

\begin{corollary}\label{cor: descreasing is maximizer}
    For $d \in \Dyck_n$, we have $\delta_n \in M_d$ if and only if $\pk d = \bpk d$. 
    These Dyck paths are counted by \cite[\href{https://oeis.org/A1519}{A1519}]{oeis}.
\end{corollary}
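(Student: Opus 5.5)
We first compute $\des(d,\delta_n)$ exactly and then compare it with $m_d = 2n-1-\bpk d$ from \Cref{md}. Label $d$ with $\delta_n$, so that the $i$-th up-step and the $i$-th down-step both get the label $n+1-i$. Consider each of the $2n-1$ adjacent pairs of steps of $d$ in turn.

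\begin{itemize}
    \item A pair $UU$ carries labels $n+1-i,\, n-i$. This is a descent.
    \item A pair $DD$ is likewise always a descent.
    \item A pair $DU$ (a valley) has the label of the $j$-th down-step followed by the label of the next up-step, which is the $(i+1)$-th up-step, where $i$ is the number of up-steps so far. Since $j \le i < i+1$, the label $n+1-j$ exceeds $n-i$, so a valley is always a descent.
    \item A pair $UD$ (a peak) has the label of the $i$-th up-step followed by that of the $j$-th down-step, with $j \le i$. So the labels are $n+1-i$ and $n+1-j \ge n+1-i$. This is never a descent: it is a plateau when $i=j$ and an ascent otherwise.
\end{itemize}

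Hence $\des(d,\delta_n) = 2n-1-\pk d$. Every peak of $d$ is a non-descent position, consistent with \Cref{upperbound}.

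It then follows that $\delta_n \in M_d$ if and only if $2n-1-\pk d = 2n-1-\bpk d$, that is, if and only if $\pk d = \bpk d$, by \Cref{md}. As a sanity check, this also gives $\pk d \ge \bpk d$ for every $d$. That inequality can be seen directly as well: each peak of $\bounce d$ has its down-step on a down-step of $d$ that is preceded by an up-step of $d$. So each bounce peak determines a peak of $d$, and distinct bounce peaks determine distinct peaks. This check is not needed for the proof.

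For the enumeration claim, we would use the bijection of \Cref{bpk=ht} together with the characterization above. The condition $\pk d = \bpk d$ means that $d$ has exactly one peak inside each bounce factor. Equivalently, $d$ is determined by $\bcomp d$ together with, for each factor, a choice of how the descent into the next factor interleaves. One then matches the resulting count with A1519 via its known generating function. We expect this matching to be the only real obstacle, since the equivalence itself is the short computation above. To carry it out, we would describe the paths with one peak per bounce factor by the height to which $d$ descends between consecutive bounce peaks. This gives a composition together with constrained choices, and we would then verify the recurrence of A1519, perhaps only by reference and computation, as the paper does.
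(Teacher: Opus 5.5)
Your proof of the equivalence is correct and follows the paper's argument. The paper simply asserts that the non-descents of $\can(d,\delta_n)$ are exactly the peaks of $d$ and compares $2n-1-\pk d$ with \Cref{md}; your four-case check supplies the details.

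Your aside, however, is false: a peak of $\bounce d$ need not have its down-step immediately preceded by an up-step of $d$. For $d=UUDUDD$ we have $\bounce d=UUDDUD$, and the down-step of the second bounce peak is the last step of $d$, which is preceded by a down-step. The third bounce peak in \Cref{fig:bouncepath} behaves the same way; whether bounce peaks sit on peaks of $d$ is exactly the nontrivial condition the paper later studies in connection with $|B_d|=1$. The inequality $\pk d\ge\bpk d$ is still true. It follows from your computation, or directly: each of the first $\bpk d$ bounce factors contains an up-step and is followed by a down-step, so its last up-step begins a peak of $d$.

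What is missing is the enumeration, which you leave as a plan. The step you postpone, namely determining how many such paths there are for each bounce composition, is the whole content of that part. First, $d$ has $\bpk d+1$ bounce factors, the last consisting only of down-steps. So the correct form of your condition is that the up-steps of each bounce factor form a single run, rather than ``one peak per factor.'' Now let $\bcomp d=(c_1,\dots,c_k)$. For $2\le i\le k$, the $i$-th factor has $c_{i-1}$ down-steps and $c_i$ up-steps and starts with a down-step. Under the condition it must therefore be $D^aU^{c_i}D^{c_{i-1}-a}$ with $1\le a\le c_{i-1}$, and every such choice yields a Dyck path with the prescribed bounce path. This gives the paper's count $\sum_c c_1c_2\cdots c_{k-1}$ over compositions $c$ of $n$. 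With $G=x/(1-x)$ and $F=x/(1-x)^2$, its generating function is $\sum_{k\ge1}GF^{k-1}=G/(1-F)=x(1-x)/(1-3x+x^2)=(1-2x)/(1-3x+x^2)-1$, which is OEIS A1519 for $n\ge1$. Neither a recurrence check nor the bijection of \Cref{bpk=ht} is needed.
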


\begin{proof}
    It is not difficult to check that the non-descents in $\can(d, \delta_n)$ correspond to the peaks of $d$. 
    Hence, $\des(d, \delta_n) = 2n - 1 - \pk d$. 
    Comparing this with the expression for $m_d$ from \Cref{md}, we get that $\delta_n \in M_d$ if and only if $\pk d = \bpk d$.

    To show that these Dyck paths are counted by the given OEIS sequence, we show that their generating function is
    \begin{equation*}
        \frac{1 - 2x}{1 - 3x + x^2}.
    \end{equation*}
    To do this, we count them based on their bounce paths. 
    Recall that for a Dyck path $d$, we use $\bcomp d$ to denote the heights of the peaks of its bounce path. 
    It is well-known that the \emph{total} number of Dyck paths $d$ with $\bcomp d = (c_1, c_2, \ldots, c_k)$ is
    \begin{equation*}
        \prod_{i = 1}^{k - 1} \binom{c_i + c_{i + 1} - 1}{c_{i + 1}}.
    \end{equation*}
    This is because, given the composition, we only have to choose lattice walks in certain rectangles between peaks of the bounce path to obtain a Dyck path with the corresponding bounce path (see \Cref{fig:dyckfrombounce}).

    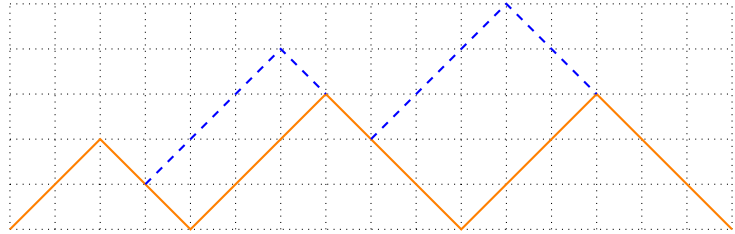
\begin{figure}[H]
        \centering
        \begin{tikzpicture}[scale = 0.6]
            \draw[dotted] (0, 0) grid (16, 5);

            \draw[thick, orange] (0, 0) -- (2, 2) -- (4, 0) -- (7, 3) -- (10, 0) -- (13, 3) -- (16, 0);

            \draw[thick, dashed, blue] (3, 1) -- (6, 4) -- (7, 3);
            \draw[thick, dashed, blue] (8, 2) -- (11, 5) -- (13, 3);            
        \end{tikzpicture}
        \caption{Dyck paths with the given bounce path are those that lie between the \textcolor{orange}{bounce path} and the \textcolor{blue}{boundary above it}.}
        \label{fig:dyckfrombounce}
    \end{figure}

    To make sure that the number of peaks of $d$ matches the number of peaks of $\bounce d$, we require that all the up-steps in each bounce factor of $d$ form a consecutive sequence of steps. 
    Hence, the number of Dyck paths $d$ of semilength $n$ where $\pk d = \bpk d$ is given by
    \begin{equation*}
        \sum_{c = (c_1, c_2, \ldots, c_k)} c_1c_2 \cdots c_{k - 1}
    \end{equation*}
    where the sum is over all compositions $c$ of $n$ (see \Cref{ex:pk=bpk}). 
    It can be checked that these numbers have the required generating function.
\end{proof}

\begin{example}\label{ex:pk=bpk}
    The $2 \cdot 3 = 6$ Dyck paths $d$ with $\bcomp d = (2, 3, 3)$ and $\delta_8 \in M_d$ are shown in \Cref{fig:pk=bpk}.
\end{example}

\begin{figure}[H]
    \centering
    \begin{tikzpicture}[scale = 0.4]
        \draw[thin, dotted] (0, 0) grid (16, 3);

        \draw[thick, blue] (0, 0) -- (2, 2) -- (4, 0) -- (7, 3) -- (10, 0) -- (13, 3) -- (16, 0);

        \draw[orange, dashed] (0, 0) -- (2, 2) -- (4, 0) -- (7, 3) -- (10, 0) -- (13, 3) -- (16, 0);
    \end{tikzpicture}
    \hspace{0.4cm}
    \begin{tikzpicture}[scale = 0.4]
        \draw[thin, dotted] (0, 0) grid (16, 4);

        \draw[thick, blue] (0, 0) -- (2, 2) -- (3, 1) -- (6, 4) -- (10, 0) -- (13, 3) -- (16, 0);

        \draw[orange, dashed] (0, 0) -- (2, 2) -- (4, 0) -- (7, 3) -- (10, 0) -- (13, 3) -- (16, 0);
    \end{tikzpicture}
    \vspace{0.5cm}

    \begin{tikzpicture}[scale = 0.4]
        \draw[thin, dotted] (0, 0) grid (16, 4);

        \draw[thick, blue] (0, 0) -- (2, 2) -- (4, 0) -- (7, 3) -- (9, 1) -- (12, 4) -- (16, 0);

        \draw[orange, dashed] (0, 0) -- (2, 2) -- (4, 0) -- (7, 3) -- (10, 0) -- (13, 3) -- (16, 0);
    \end{tikzpicture}
    \hspace{0.4cm}
    \begin{tikzpicture}[scale = 0.4]
        \draw[thin, dotted] (0, 0) grid (16, 4);

        \draw[thick, blue] (0, 0) -- (2, 2) -- (3, 1) -- (6, 4) -- (9, 1) -- (12, 4) -- (16, 0);

        \draw[orange, dashed] (0, 0) -- (2, 2) -- (4, 0) -- (7, 3) -- (10, 0) -- (13, 3) -- (16, 0);
    \end{tikzpicture}
    \vspace{0.5cm}

    \begin{tikzpicture}[scale = 0.4]
        \draw[thin, dotted] (0, 0) grid (16, 5);

        \draw[thick, blue] (0, 0) -- (2, 2) -- (4, 0) -- (7, 3) -- (8, 2) -- (11, 5) -- (16, 0);

        \draw[orange, dashed] (0, 0) -- (2, 2) -- (4, 0) -- (7, 3) -- (10, 0) -- (13, 3) -- (16, 0);
    \end{tikzpicture}
    \hspace{0.4cm}
    \begin{tikzpicture}[scale = 0.4]
        \draw[thin, dotted] (0, 0) grid (16, 5);

        \draw[thick, blue] (0, 0) -- (2, 2) -- (3, 1) -- (6, 4) -- (8, 2) -- (11, 5) -- (16, 0);

        \draw[orange, dashed] (0, 0) -- (2, 2) -- (4, 0) -- (7, 3) -- (10, 0) -- (13, 3) -- (16, 0);
    \end{tikzpicture}
    \caption{Dyck paths $d$ with $\bcomp d = (2, 3, 3)$ and $\pk d = \bpk d$.}
    \label{fig:pk=bpk}
\end{figure}

\begin{remark}
    There are several other classes of Dyck paths counted by \cite[\href{https://oeis.org/A1519}{A1519}]{oeis}. 
    For example, Dyck paths whose height is at most $3$ (graded by semilength). 
    Another class is Dyck paths that are \emph{nondecreasing}, i.e., the heights of their valleys form a nondecreasing sequence. 
    It would be a fun exercise to find bijections between the class of Dyck paths described in the corollary above and the other classes mentioned in \cite[\href{https://oeis.org/A1519}{A1519}]{oeis}.
\end{remark}

For the class of bounce paths, there is a unique permutation realizing the maximum number of descents. This role is played by the permutation $\delta_n$, and the following stronger statement holds.

\begin{corollary}\label{cor: unique decreasing maximizer}
    For $d \in \Dyck_n$, we have $M_d = \{\delta_n\}$ if and only if $d = \bounce d$.
\end{corollary}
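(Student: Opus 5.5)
We prove the two implications separately. Throughout, $\bcomp d=(c_1,\dots,c_k)$, so $k=\bpk d$, and by \Cref{md} a permutation $\sigma$ lies in $M_d$ exactly when $\can(d,\sigma)$ has only $k$ non-descents.

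\emph{($\Leftarrow$)} Suppose $d=\bounce d$. Then $d=U^{c_1}D^{c_1}\cdots U^{c_k}D^{c_k}$. Cut $\sigma$ into consecutive segments $w_1,\dots,w_k$ of lengths $c_1,\dots,c_k$. Then
\[\can(d,\sigma)=w_1w_1\,w_2w_2\cdots w_kw_k.\]
Each block $w_iw_i$ contains at least one non-descent:
\begin{itemize}
\item if $c_i=1$, the block is a plateau;
\item if $c_i\ge 2$, either $w_i$ has an internal non-descent, or $w_i$ is decreasing, in which case the junction of the two copies (last letter of $w_i$ followed by its first letter) is an ascent.
\end{itemize}
So $\sigma\in M_d$ forces three things. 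Each block contributes exactly one non-descent, so each $w_i$ is decreasing. Every junction between consecutive blocks is a descent, so $\min w_i>\max w_{i+1}$. Together these say $\sigma$ is decreasing, i.e.\ $\sigma=\delta_n$. Since $\pk d=\bpk d$ here, \Cref{cor: descreasing is maximizer} gives $\delta_n\in M_d$, and hence $M_d=\{\delta_n\}$.

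\emph{($\Rightarrow$)} Suppose $d\neq\bounce d$. If $\pk d\neq\bpk d$, then $\delta_n\notin M_d$ by \Cref{cor: descreasing is maximizer}, so $M_d\neq\{\delta_n\}$ and we are done. It remains to treat $\pk d=\bpk d$ with $d\ne\bounce d$. By the description in the proof of \Cref{cor: descreasing is maximizer}, the up-steps of each bounce factor of $d$ are consecutive. Since $d\neq\bounce d$, some up-run of $d$ starts at a valley above the $x$-axis.

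In this case we exhibit a second maximizer of the form $\tau=(j\ \ j{-}1)\circ\delta_n$, i.e.\ swap the values $j$ and $j-1$ in $\delta_n$. Swapping two consecutive values changes the comparison only between positions of $\can(d,\delta_n)$ that are adjacent and carry the labels $j$ and $j-1$. Hence $\des(d,\tau)=\des(d,\delta_n)=m_d$ as soon as no step labeled $j$ is adjacent to a step labeled $j-1$. Write $U_r,D_r$ for the $r$-th up- and down-step; under $\delta_n$ both carry the label $n+1-r$. The condition for $j=n+1-r$ therefore becomes: no two of $U_r,U_{r+1},D_r,D_{r+1}$ carrying different labels are adjacent in $d$. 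Concretely:
\begin{itemize}
\item $U_r$ and $U_{r+1}$ are not consecutive;
\item $D_r$ and $D_{r+1}$ are not consecutive;
\item $D_r$ is not immediately followed by $U_{r+1}$;
\item $U_r$ is not immediately followed by $D_{r+1}$, and $D_{r+1}$ is not immediately followed by $U_r$;
\item $U_{r+1}$ is not immediately followed by $D_r$, and $D_r$ is not immediately followed by $U_{r+1}$.
\end{itemize}

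The key step, and the main obstacle, is choosing $r$ from the structure of $d$. First I would take $U_r$ to be the last up-step of a bounce factor $i$ whose next up-run starts at a valley of height $h>0$. Then $U_r,U_{r+1}$ are separated by down-steps. Moreover, the down-step $D_s$ immediately preceding $U_{r+1}$ has $s=r-h<r$, which rules out the adjacencies "$D_r$ then $U_{r+1}$" and "$D_{r+1}$ then $U_r$". The remaining worry is that $D_r$ and $D_{r+1}$ might be consecutive, or adjacent to $U_{r+1}$ or $U_r$. If that happens, I would instead shift to $r+1,r+2,\dots$ inside the next up-run, or to the first up-step of that run. I would use the consecutive-up-steps structure of the bounce factors to argue that some index in this window separates both the up-steps and the down-steps; this case check is where I expect the real work to lie.

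If such an $r$ cannot always be found, the fallback is to run the argument with $\bperm d$ instead. I would show via the description of \Cref{algo} that $\bperm d\neq\delta_n$ whenever a valley above the axis exists. This would give a maximizer different from $\delta_n$ directly, since $\bperm d\in M_d$ by the proof of \Cref{md}.
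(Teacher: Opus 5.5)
Your $(\Leftarrow)$ direction is correct, and more explicit than the paper's appeal to $U^nD^n$ and primitive factors. The reduction of $(\Rightarrow)$ to the case $\pk d=\bpk d$, $d\neq\bounce d$ via \Cref{cor: descreasing is maximizer} is also valid. The gap is the key step: in that remaining case there is \emph{no} index $r$ satisfying your adjacency conditions, so $\tau$ never qualifies.

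To see this, note that the up-steps of each bounce factor are consecutive. Hence $U_r$ and $U_{r+1}$ are non-adjacent only if $r=c_1+\cdots+c_i$ with $i<k$, so shifting $r$ into an up-run never helps. Let $h$ be the height of the valley just before $U_{r+1}$; the down-step at that valley is $D_{r-h}$.
\begin{itemize}
\item If $h=0$, then $D_r$ is immediately followed by $U_{r+1}$.
\item If $h>0$, consider the down-run right after the up-run of $U_{r+1}$. It starts with $D_{r-h+1}$ and extends at least to $D_{r+1}$. The reason is that $D_{r+1}$ is the down-step of the $(i+1)$-st peak of $\bounce d$, hence the first step of bounce factor $i+2$, and so it precedes all up-steps of that factor. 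Thus $D_r$ and $D_{r+1}$ are consecutive.
\end{itemize}
This is not just a weakness of your sufficient criterion. For $d=U^3DU^3D^5$ (where $\bcomp d=(3,3)$ and $\pk d=\bpk d=2$), none of the five transpositions of consecutive values of $\delta_6$ lies in $M_d$, whereas $\bperm d=621543$ does.

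So the proof rests entirely on your fallback, which is exactly the paper's route ($\bperm d\neq\delta_n$), and you give no argument for it. It can be closed in one line from the proof of \Cref{md}, which shows that $\can(d,\bperm d)$ is decreasing on every bounce factor. If $d\neq\bounce d$, some bounce factor $i\ge 2$ contains an up-step $U_a$ before a down-step $D_b$. Since $D_b$ corresponds to an up-step of factor $i-1$, we have $b<a$. Decreasingness then forces the value of $\bperm d$ at position $a$ to exceed its value at position $b$, so $\bperm d\neq\delta_n$. This makes your reduction to $\pk d=\bpk d$ unnecessary. It is also more explicit than the paper's argument through the dynamics of \Cref{algo}, where non-consecutive labels on the first run of up-steps are said to persist to the end.
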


\begin{proof}
    Let $d = d_1d_2 \cdots d_k$ be the decomposition of $d$ into primitive factors. 
    When maximizing the number of descents, the last step of $d_i$ must get a larger label than the first step of $d_{i + 1}$ for all $i \in [k - 1]$.
    
    Suppose that $d = \bounce d$. 
    If $d = U^nD^n$, then one can check that $M_d = \{\delta_n\}$. 
    In the case $d = U^{c_1}D^{c_1}U^{c_2}D^{c_2} \cdots U^{c_k}D^{c_k}$, the fact that $M_d = \{\delta_n\}$ follows from the case when $k = 1$ and the observation about primitive factors made above.

    Next, suppose that $d \neq \bounce d$. 
    We claim that, in this case, $\bperm d \neq \delta_n$, which will show that $M_d \neq \{\delta_n\}$. 
    We first assume that $d$ is primitive. 
    Note that when applying \Cref{algo}, since $d \neq \bounce d$ and $d$ is primitive, at some point in the labeling, the first string up-steps will get non-consecutive labels (see \Cref{fig:nonconsec}).

    \begin{figure}[H]
        \centering
        \begin{tikzpicture}[scale = 0.9]
            \draw[dotted] (0, 0) grid (7, 4);
            \draw[dotted] (7, 0) -- (7.5, 0);
            \draw[dotted] (7, 1) -- (7.5, 1);
            \draw[dotted] (7, 2) -- (7.5, 2);
            \draw[dotted] (7, 3) -- (7.5, 3);
            \draw[dotted] (7, 4) -- (7.5, 4);
            \draw[thick, blue] (0, 0) -- (1, 1) -- (2, 2) -- (3, 3) -- (4, 4) -- (5, 3) -- (6, 2) -- (7, 3);
            \node at (0 + 0.4, 0 + 0.75) {\scriptsize 8};
            \node at (1 + 0.4, 1 + 0.75) {\scriptsize 7};
            \node[circle, draw = red, inner sep = 1pt] at (2 + 0.4, 2 + 0.75) {\scriptsize 6};
            \node[circle, draw = red, inner sep = 1pt] at (2 + 1 + 0.4, 2 + 1 + 0.75) {\scriptsize 5};
            \node at (3 + 1 + 0.6, 3 + 1 - 0.25) {\scriptsize 8};
            \node at (4 + 1 + 0.6, 2 + 1 - 0.25) {\scriptsize 7};
            \node at (5 + 1 + 0.4, 1 + 1 + 0.75) {\scriptsize \color{red}\textbf{6}};

            \node at (8.75, 2) {$\rightarrow$};
        \end{tikzpicture}
        \hfill
        \begin{tikzpicture}[scale = 0.9]
            \draw[dotted] (0, 0) grid (7, 4);
            \draw[dotted] (7, 0) -- (7.5, 0);
            \draw[dotted] (7, 1) -- (7.5, 1);
            \draw[dotted] (7, 2) -- (7.5, 2);
            \draw[dotted] (7, 3) -- (7.5, 3);
            \draw[dotted] (7, 4) -- (7.5, 4);
            \draw[thick, blue] (0, 0) -- (1, 1) -- (2, 2) -- (3, 3) -- (4, 4) -- (5, 3) -- (6, 2) -- (7, 3);
            \node at (0 + 0.4, 0 + 0.75) {\scriptsize 8};
            \node at (1 + 0.4, 1 + 0.75) {\scriptsize 7};
            \node at (2 + 0.4, 2 + 0.75) {\scriptsize 5};
            \node at (2 + 1 + 0.4, 2 + 1 + 0.75) {\scriptsize 4};
            \node at (3 + 1 + 0.6, 3 + 1 - 0.25) {\scriptsize 8};
            \node at (4 + 1 + 0.6, 2 + 1 - 0.25) {\scriptsize 7};
            \node at (5 + 1 + 0.4, 1 + 1 + 0.75) {\scriptsize 6};
        \end{tikzpicture}
        \caption{A point in the algorithm where the first sequence of up-steps get non-consecutive labels.}
        \label{fig:nonconsec}
    \end{figure}
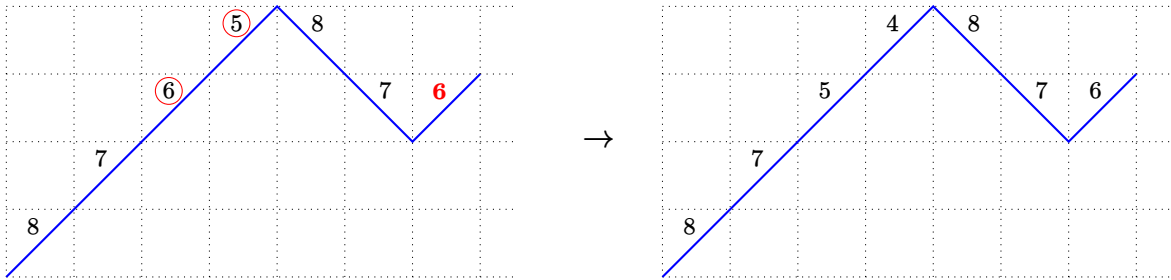
    
    By the nature of the relabeling process, this remains true till the end of the algorithm. 
    Hence, we cannot have $\bperm d = \delta_n$. 
    A similar logic can be applied when $d$ is not primitive by focusing on the first primitive factor that has more than one peak.
\end{proof}


We now show that the polynomials $C_d(t)$ have no \emph{internal zeroes}. 
A polynomial $f(t)$ is said to have an \emph{internal zero} if there exist $0 \leq i < j < k$ such that $[t^i]f(t), [t^k]f(t) \neq 0$ but $[t^j]f(t) = 0$.

\begin{proposition}\label{niz}
    For any Dyck path $d$, the polynomial $C_d(t)$ has no internal zeroes.
\end{proposition}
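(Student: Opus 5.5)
Since $C_d$ is palindromic (Proposition \ref{sym}) and its support lies between $\bpk d - \lpk d$ (Corollary \ref{mindes}) and $m_d = 2n-1-\bpk d$ (Theorem \ref{md}), it suffices to show that every integer in this interval is attained as $\des(d,\sigma)$ for some $\sigma \in \s_n$. The plan is to connect a minimizer to a maximizer by a chain of permutations in which $\des(d,\cdot)$ changes by at most one at each step. A natural choice is to start from $\bperm d$, which attains $m_d$, and end at $(\bperm d)^c$, which by the complement identity in the proof of Proposition \ref{sym} attains $2n-1-\lpk d - m_d = \bpk d - \lpk d$. The chain should consist of adjacent transpositions of values, $\sigma \mapsto s_j\sigma$, meaning we swap the labels $j$ and $j+1$ everywhere in $\can(d,\sigma)$.

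\textbf{Effect of swapping two consecutive values.} The key local computation is that swapping the values $j$ and $j+1$ in $\can(d,\sigma)$ changes the descent set only at positions where $j$ and $j+1$ are adjacent, because comparisons with any other value are unchanged. The four occurrences $j,j,j+1,j+1$ lie in the word of length $2n$, so there are at most two adjacencies between a $j$ and a $j+1$. Hence $|\des(d,s_j\sigma)-\des(d,\sigma)|\le 2$. The task is then to choose a path of swaps on which the change is always at most $1$.

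\textbf{Choosing the path.} The complement is obtained from $\sigma$ by reversing the order of values, which is the longest element $w_0$ acting on the left. It can be written as a product of adjacent value transpositions in many reduced ways, for instance by bubble-sorting the values. Along any reduced word, each pair of values $\{a,b\}$ has its relative order flipped exactly once. Each adjacency in the word contributes a descent or an ascent, except for plateaus, which are fixed. So along the chain the total descent count goes from $m_d$ to $\bpk d - \lpk d$, and the only danger is a single step where $j$ and $j+1$ are adjacent twice with both adjacencies flipping in the same direction, a jump of $\pm 2$. When that happens I would try two remedies. The first is to insert an intermediate permutation, for example swapping via a third value or choosing a different reduced word so that the two flips of that pair occur at different times. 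This fails because both adjacencies involve the same pair, so I would instead insert a cyclic relabeling of three consecutive values. The cleaner remedy is the second one: note that a double adjacency forces the local pattern $j\,(j{+}1)\,j\,(j{+}1)$ or $j\,(j{+}1)(j{+}1)\,j$-type configurations, which are very rigid in $d$. For example, the pattern $a b a b$ as consecutive steps means $UUDD$ would not label that way. Instead it forces up/down steps $U U D D$ labelled $a b a b$, or $U D U D$ with labels $a a$, which is not double-adjacent. I would show that in the double-adjacency case the two adjacencies give one ascent and one descent, or contain a plateau, so the net change is $0$. For instance, $\ldots a\,b\,a\,b\ldots$ from $UUDD$ has adjacencies $ab$, $ba$, $ab$, and swapping $a,b$ changes the count by at most $1$ in absolute value.

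\textbf{Main obstacle.} The main obstacle is exactly this case analysis. I would enumerate the relative positions of the four steps labelled $j,j+1$ using the canon structure: the up-steps appear in the order of $\sigma$, the down-steps likewise, and the $i$-th down-step comes after the $i$-th up-step. The goal is to check that the signed change is always in $\{-1,0,1\}$. If some configuration genuinely gives $\pm2$, I would fall back to an induction on $n$. That argument would remove the label $1$ (both its steps) to get a smaller Dyck path $d'$, use the inductive hypothesis for $C_{d'}$, and show that reinserting $1$ can shift the descent count by controlled amounts that fill the intervals.
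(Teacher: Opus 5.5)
Your reduction is fine as far as it goes. By \Cref{sym,md} it suffices to realize every value between $\bpk d-\lpk d$ and $m_d$, and $\bperm d$ and its complement attain the two ends. The proof fails at the central claim that some chain of value swaps $\sigma\mapsto s_j\sigma$ moves $\des(d,\cdot)$ by at most $1$ per step.

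Swapping $j$ and $j+1$ changes exactly the comparisons between a copy of $j$ and an adjacent copy of $j+1$. Suppose $j,j+1$ occupy positions $p,p+1$ of $\sigma$, the $p$-th and $(p+1)$-th up-steps of $d$ are consecutive steps, and the $p$-th and $(p+1)$-th down-steps are also consecutive. Suppose further that the $(p+1)$-th up-step is not immediately followed by the $p$-th down-step. Then exactly two comparisons change, both comparing $\sigma_p$ with $\sigma_{p+1}$ in the same order. They flip together, so $\des$ jumps by exactly $2$.

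For example, take $d=U^3D^3$: $\can(d,123)=123123$ has one descent, while $\can(d,213)=213213$ has three. This refutes your claim that a double adjacency gives one ascent and one descent. Your count of ``at most two adjacencies'' is also off: $UUDD$ labelled $abab$ has three, though that case only moves $\des$ by $1$.

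No cleverer chain avoids the jump. $\sigma$ and $\sigma^c$ order positions $1,2$ oppositely, and the only value swaps that reverse that order are swaps of the two values sitting there. For $U^3D^3$, every such swap changes $\des$ by $\pm 2$. So the discrete intermediate-value argument fails as designed.

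Your fallback is not yet an argument. Deleting the two steps labelled $1$ does not give a fixed smaller path, since where $1$ sits depends on $\sigma$. And ``controlled amounts that fill the intervals'' is precisely what must be proved.

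The missing idea, which is what the paper uses, is an explicit lift that adds exactly one descent:
\begin{enumerate}
\item Reduce to primitive $d$; labelling one primitive factor entirely below the next makes descents additive.
\item Delete the first up-step and the first down-step to get a fixed $d'\in\Dyck_{n-1}$ with $\bpk d'\in\{\bpk d-1,\bpk d\}$.
\item If $d$ begins with $U^{m+1}D$, write $\can(d',\sigma')=a_1\cdots a_m a_{m+1}\cdots$.
\item If $a_m\le a_{m+1}$, raise all values of $\sigma'$ by one and prepend $1$; otherwise prepend $n$.
\end{enumerate}
In $\can(d,\sigma)$ the new label is inserted at the front and between the letters coming from $a_m$ and $a_{m+1}$. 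In each case this nets exactly one extra descent. By induction, this realizes every value in $[\bpk d+1,\,2n-2-\bpk d]$. The endpoints $\bpk d$ and $2n-1-\bpk d$ come from \Cref{mindes,md}.
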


\begin{proof}
    We prove this by induction on the semilength $n$ of $d$. 
    We can assume that $d$ is primitive. 
    This can be derived from the fact that if $d = d_1d_2$, then we can always find $\sigma \in \s_n$ such that $\des(d, \sigma) = \des(d_1, \sigma_1) + \des(d_2, \sigma_2)$ for any labeling $\sigma_1$ and $\sigma_2$ of $d_1$ and $d_2$ respectively.

    Since $d$ is primitive, by \Cref{mindes}, the minimum power of $t$ in $C_d(t)$ is $\bpk d$. 
    Let $d'$ be the Dyck path obtained by deleting the first up-step of $d$ and its corresponding down-step. 
    Note that this gives us $\bpk d' \in \{\bpk d, \bpk d - 1\}$. 
    We will show that if there is a permutation $\sigma' \in \s_{n - 1}$ such that $\des(d', \sigma') = k$, then there is a permutation $\sigma \in \s_n$ such that $\des(d, \sigma) = k + 1$. 
    This will prove the result since, by induction, we can always find such a $\sigma'$ for any $k$ that satisfies $\bpk d \leq k \leq 2n - 3 - \bpk d$.

    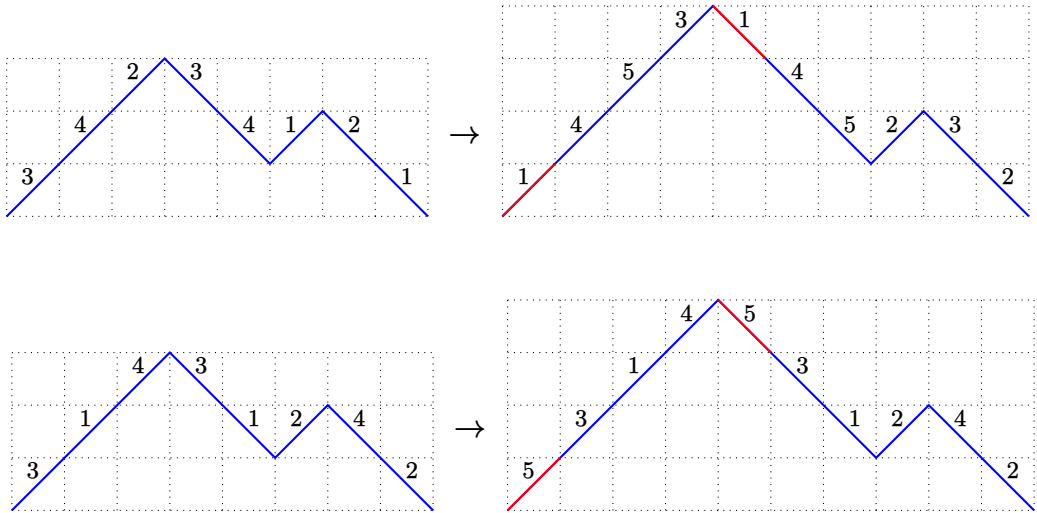
\begin{figure}[H]
        \centering
        \begin{tikzpicture}[scale = 0.7]
            \draw[dotted] (0, 0) grid (8, 3);
            \draw[thick, blue] (0, 0) -- (3, 3) -- (5, 1) -- (6, 2) -- (8, 0);
            \node at (0 + 0.4, 0 + 0.75) {\scriptsize 3};
            \node at (1 + 0.4, 1 + 0.75) {\scriptsize 4};
            \node at (2 + 0.4, 2 + 0.75) {\scriptsize 2};
            \node at (3 + 0.6, 3 - 0.25) {\scriptsize 3};
            \node at (4 + 0.6, 2 - 0.25) {\scriptsize 4};
            \node at (5 + 0.4, 1 + 0.75) {\scriptsize 1};
            \node at (6 + 0.6, 2 - 0.25) {\scriptsize 2};
            \node at (7 + 0.6, 1 - 0.25) {\scriptsize 1};
            
            \node at (8.7, 1.5) {$\rightarrow$};
        \end{tikzpicture}
        \begin{tikzpicture}[scale = 0.7]
            \draw[dotted] (0, 0) grid (10, 4);
            \draw[thick, blue] (0, 0) -- (4, 4) -- (7, 1) -- (8, 2) -- (10, 0);
            \draw[thick, red] (0, 0) -- (1, 1);
            \draw[thick, red] (4, 4) -- (5, 3);
            \node at (0 + 0.4, 0 + 0.75) {\scriptsize 1};
            \node at (1 + 0.4, 1 + 0.75) {\scriptsize 4};
            \node at (2 + 0.4, 2 + 0.75) {\scriptsize 5};
            \node at (3 + 0.4, 3 + 0.75) {\scriptsize 3};
            \node at (4 + 0.6, 4 - 0.25) {\scriptsize 1};
            \node at (5 + 0.6, 3 - 0.25) {\scriptsize 4};
            \node at (6 + 0.6, 2 - 0.25) {\scriptsize 5};
            \node at (7 + 0.4, 1 + 0.75) {\scriptsize 2};
            \node at (8 + 0.6, 2 - 0.25) {\scriptsize 3};
            \node at (9 + 0.6, 1 - 0.25) {\scriptsize 2};
        \end{tikzpicture}
        \vspace{1cm}
        
        \begin{tikzpicture}[scale = 0.7]
            \draw[dotted] (0, 0) grid (8, 3);
            \draw[thick, blue] (0, 0) -- (3, 3) -- (5, 1) -- (6, 2) -- (8, 0);
            \node at (0 + 0.4, 0 + 0.75) {\scriptsize 3};
            \node at (1 + 0.4, 1 + 0.75) {\scriptsize 1};
            \node at (2 + 0.4, 2 + 0.75) {\scriptsize 4};
            \node at (3 + 0.6, 3 - 0.25) {\scriptsize 3};
            \node at (4 + 0.6, 2 - 0.25) {\scriptsize 1};
            \node at (5 + 0.4, 1 + 0.75) {\scriptsize 2};
            \node at (6 + 0.6, 2 - 0.25) {\scriptsize 4};
            \node at (7 + 0.6, 1 - 0.25) {\scriptsize 2};
            
            \node at (8.7, 1.5) {$\rightarrow$};
        \end{tikzpicture}
        \begin{tikzpicture}[scale = 0.7]
            \draw[dotted] (0, 0) grid (10, 4);
            \draw[thick, blue] (0, 0) -- (4, 4) -- (7, 1) -- (8, 2) -- (10, 0);
            \draw[thick, red] (0, 0) -- (1, 1);
            \draw[thick, red] (4, 4) -- (5, 3);
            \node at (0 + 0.4, 0 + 0.75) {\scriptsize 5};
            \node at (1 + 0.4, 1 + 0.75) {\scriptsize 3};
            \node at (2 + 0.4, 2 + 0.75) {\scriptsize 1};
            \node at (3 + 0.4, 3 + 0.75) {\scriptsize 4};
            \node at (4 + 0.6, 4 - 0.25) {\scriptsize 5};
            \node at (5 + 0.6, 3 - 0.25) {\scriptsize 3};
            \node at (6 + 0.6, 2 - 0.25) {\scriptsize 1};
            \node at (7 + 0.4, 1 + 0.75) {\scriptsize 2};
            \node at (8 + 0.6, 2 - 0.25) {\scriptsize 4};
            \node at (9 + 0.6, 1 - 0.25) {\scriptsize 2};
        \end{tikzpicture}
        \caption{Instances of the map used in the proof of \Cref{niz}.}
        \label{fig:niz}
    \end{figure}

    Let $\sigma' \in \s_n$ be such that $\des(d', \sigma') = k$. 
    Suppose that $d$ starts with $m$ up-steps followed by a down-step. 
    Let the first $m$ terms of $\sigma'$ be $a_1, a_2, \ldots, a_m$. 
    Suppose that the first $m + 1$ terms of $\can(d', \sigma')$ are $a_1, a_2, \ldots, a_m, a_{m + 1}$ (note that we might have $a_{m + 1} = a_1$). 
    If $a_m < a_{m + 1}$, then we set $\sigma \in \s_n$ to be the permutation obtained by increasing each term of $\sigma'$ by one and appending $1$ at the beginning (see \Cref{fig:niz}). 
    Note that the first $m + 3$ terms of $\can(d, \sigma)$ are $1, a_1 + 1, a_2 + 1, \ldots, a_m + 1, 1, a_{m + 1} + 1$. 
    It can be verified that $\des(d, \sigma) = k + 1$.

    If $a_m > a_{m + 1}$, we set $\sigma \in \s_n$ to be the permutation obtained by appending $n$ at the beginning of $\sigma'$. 
    It can be similarly be verified that this permutation satisfies our requirements.
\end{proof}

\begin{remark}
    For any $d \in \Dyck_n$, consider the polynomial
    \begin{equation*}
        \tilde{C_d}(t) \coloneqq \sum_{\substack{\sigma \in \s_n\\ \sigma_1 \in \{1, n\}}} t^{\des(d, \sigma)}.
    \end{equation*}
    The proof of the above proposition shows that $[t^i]C_d(t) \neq 0$ if and only if $[t^i]\tilde{C_d}(t) \neq 0$ for any $i \geq 0$. 
    Along with the proof of \Cref{sym}, we get that $C_d$ and $\tilde{C_d}$ have the same degree $m_d$ and are both palindromic with respect to the same degree $2n - 1 - \lpk d$.
\end{remark}

\section{Valley swapping for canon permutations}\label{sec:valleys}

In Section \ref{sec:symdeg}, we have seen one method to choose a permutation from $M_d$, namely $\bperm d$, for any Dyck path $d$. 
In this section, we describe a different way to construct a permutation in $M_d$. 
With this method, we can walk around Dyck paths with the same bounce path, labeled to have the maximum number of descents (and fixed descent positions). 
Starting from a bounce path, we can reach any Dyck path with this underlying bounce path by swapping valleys to peaks. 
To preserve the descent structure in the corresponding canon permutation, we need to relabel the steps of the Dyck paths after each valley swap. 
The following definition introduces such a procedure.

\begin{definition}\label{def: valley swap}
    Let $\can(d, \sigma)$ be a canon permutation represented as a labeled Dyck path. 
    For a valley in the Dyck path, performing the \emph{valley swap} corresponding to that valley results in the canon permutation $\can(d', \sigma')$ where
    \begin{itemize}
        \item $d'$ is the Dyck path obtained by swapping the up and down-steps of the chosen valley, thus turning it into a peak, and
        \item if the valley in $\can(d, \sigma)$ has labels $i, j$ where $i < j$, then $\sigma'$ is obtained from $\sigma$ by replacing $i$ with $j$ and $k$ with $k - 1$ for all $k \in [i + 1, j]$.
    \end{itemize}
\end{definition}

Although we haven't specified this in the definition, we will only be dealing with the case when the valley being swapped has the larger label $j$ on its down-step. 
Note that after the valley swap is applied, the new peak has labels $j$ and $j - 1$.

\begin{example}
    In \Cref{fig:vswapex}, we swap the valley with labels $5$ and $2$. 
    The new peak will obtain labels $5$ and $4$. 
    The circled labels are the ones that change once the swap is done. 
    The circled $2$ is replaced with $5$ and all other circled labels are reduced by one.
\end{example}

\begin{figure}[h]
    \centering
    \begin{tikzpicture}[scale = 0.7]
        \draw[dotted] (0, 0) grid (16, 4);
        \draw[thick, blue] (0, 0) -- (1, 1) -- (2, 2) -- (3, 3) -- (4, 2) -- (5, 3) -- (6, 4) -- (7, 3) -- (8, 2) -- (9, 1) -- (10, 2) -- (11, 1) -- (12, 2) -- (13, 1) -- (14, 0) -- (15, 1) -- (16, 0);
        \draw[ultra thick, dotted, red] (10, 2) -- (11, 3) -- (12, 2);
        \node at (0 + 0.4, 0 + 0.75) {\scriptsize 8};
        \node[circle, draw = red, inner sep = 1pt] at (1 + 0.4, 1 + 0.75) {\scriptsize 4};
        \node[circle, draw = red, inner sep = 1pt] at (2 + 0.4, 2 + 0.75) {\scriptsize 3};
        \node at (3 + 0.6, 3 - 0.25) {\scriptsize 8};
        \node at (4 + 0.4, 2 + 0.75) {\scriptsize 7};
        \node[circle, draw = red, inner sep = 1pt] at (5 + 0.4, 3 + 0.75) {\scriptsize 5};
        \node[circle, draw = red, inner sep = 1pt] at (6 + 0.6, 4 - 0.25) {\scriptsize 4};
        \node[circle, draw = red, inner sep = 1pt] at (7 + 0.6, 3 - 0.25) {\scriptsize 3};
        \node at (8 + 0.6, 2 - 0.25) {\scriptsize 7};
        \node at (9 + 0.4, 1 + 0.75) {\scriptsize 6};
        \node at (10 + 0.6, 2 - 0.25) {\scriptsize 5};
        \node at (11 + 0.4, 1 + 0.75) {\scriptsize 2};
        \node at (12 + 0.6, 2 - 0.25) {\scriptsize 6};
        \node[circle, draw = red, inner sep = 1pt] at (13 + 0.6, 1 - 0.25) {\scriptsize 2};
        \node at (14 + 0.4, 0 + 0.75) {\scriptsize 1};
        \node at (15 + 0.6, 1 - 0.25) {\scriptsize 1};

        \node at (10 + 0.4, 2 + 0.75) {\scriptsize \color{red} 5};
        \node at (11 + 0.6, 3 - 0.25) {\scriptsize \color{red} 4};
    \end{tikzpicture}\\[0.2cm]
    $\downarrow$\\[0.4cm]
    \begin{tikzpicture}[scale = 0.7]
        \draw[dotted] (0, 0) grid (16, 4);
        \draw[thick, blue] (0, 0) -- (1, 1) -- (2, 2) -- (3, 3) -- (4, 2) -- (5, 3) -- (6, 4) -- (7, 3) -- (8, 2) -- (9, 1) -- (10, 2) -- (11, 3) -- (12, 2) -- (13, 1) -- (14, 0) -- (15, 1) -- (16, 0);
        \node at (0 + 0.4, 0 + 0.75) {\scriptsize 8};
        \node at (1 + 0.4, 1 + 0.75) {\scriptsize 3};
        \node at (2 + 0.4, 2 + 0.75) {\scriptsize 2};
        \node at (3 + 0.6, 3 - 0.25) {\scriptsize 8};
        \node at (4 + 0.4, 2 + 0.75) {\scriptsize 7};
        \node at (5 + 0.4, 3 + 0.75) {\scriptsize 4};
        \node at (6 + 0.6, 4 - 0.25) {\scriptsize 3};
        \node at (7 + 0.6, 3 - 0.25) {\scriptsize 2};
        \node at (8 + 0.6, 2 - 0.25) {\scriptsize 7};
        \node at (9 + 0.4, 1 + 0.75) {\scriptsize 6};
        \node at (10 + 0.4, 2 + 0.75) {\scriptsize 5};
        \node at (11 + 0.6, 3 - 0.25) {\scriptsize 4};
        \node at (12 + 0.6, 2 - 0.25) {\scriptsize 6};
        \node at (13 + 0.6, 1 - 0.25) {\scriptsize 5};
        \node at (14 + 0.4, 0 + 0.75) {\scriptsize 1};
        \node at (15 + 0.6, 1 - 0.25) {\scriptsize 1};
    \end{tikzpicture}
    \caption{Example of a valley swap and relabeling.}
    \label{fig:vswapex}
\end{figure}
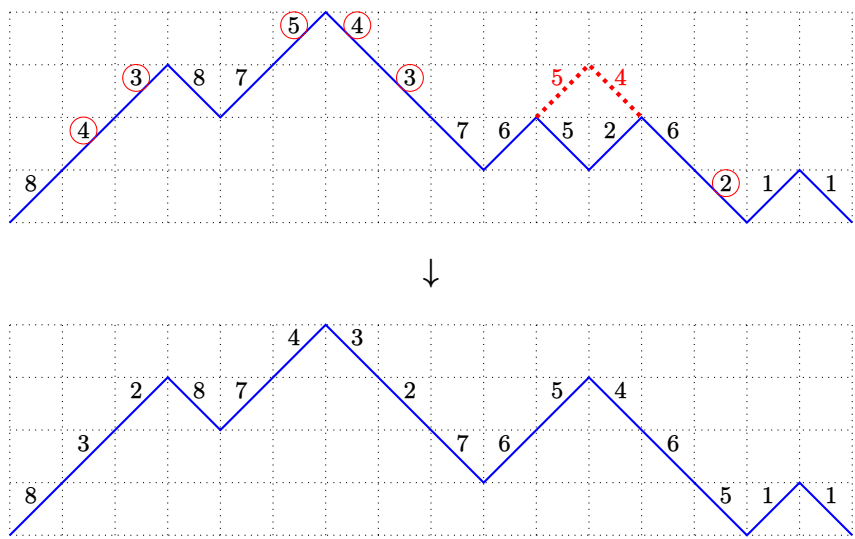

Before describing the algorithm, we need to recall a well-known poset defined on $\Dyck_n$. 
For $d_1, d_2 \in \Dyck_n$, we say that $d_1$ is \emph{under} $d_2$ if $d_1$ lies below $d_2$. 
The cover relations in this poset corresponds to changing valleys to peaks. 
This poset is in fact a lattice and is referred to as the Dyck lattice or the Stanley lattice \cite{stanleyec2}. 
For example, $d_1 = UUDUDDUD$ is under $d_2 = UUUDDUDD$. 
Note that $\bounce d$ is under $d$ for any Dyck path $d$.

\begin{algorithm}\label{valgo}
    Let $d$ be a Dyck path of semilength $n$ and $c = \can(\bounce d, \delta_n)$. 
    Repeat the following until $c$ has $d$ as its underlying Dyck path: 
    Find the first (leftmost) valley in $c$ that can be swapped such that the resulting Dyck path is under $d$. 
    Redefine $c$ to be the canon permutation obtained after performing the valley swap corresponding to this valley (according to Definition \ref{def: valley swap}).
\end{algorithm}

The essence of the algorithm is to move up from $\bounce d$ to $d$ in the Dyck lattice. 
For $\bounce d$, we know that the only permutation that maximizes the number of descents is the decreasing permutation (Corollary \ref{cor: unique decreasing maximizer}). 
Since we also know that $m_d = m_{\bounce d}$, the idea is to maintain the number of descents as we move up in the Dyck lattice. 
We will show that the valley swaps in Algorithm \ref{valgo} do this. In fact, the descent positions are preserved.

The way we draw the algorithm for $d$ is as follows. 
We start with the \textcolor{orange}{`changing path'}; this is initially $\bounce d$ labeled with the decreasing permutation. 
We draw the steps of $d$ that aren't in the \textcolor{orange}{lower} path \textcolor{blue!50}{on top}. 
Hence, at each stage we perform the valley swap to the first (leftmost) valley in the \textcolor{orange}{lower} path for which the resulting Dyck path lies under the blue path (see \Cref{fig:valgoex}).

\begin{definition}
    For any Dyck path $d$, we set $\vperm d$ to be the permutation such that $\can(d, \vperm d)$ is the canon permutation obtained after applying \Cref{valgo} to $d$.
\end{definition}

\begin{example}
    We exhibit a few steps of the algorithm for $d = U^3(DU)^2D^2U^2D^2UD^2$ in \Cref{fig:valgoex}. 
    Hence, $\vperm d = 8 6 1 7 2 5 3 4$.
\end{example}

\begin{figure}
    \centering
    \begin{tikzpicture}[scale = 0.75, yscale = 0.9]
        \draw[dotted] (0, 0) grid (16, 3);
        \draw[thick, blue!50] (0, 0) -- (1, 1) -- (2, 2) -- (3, 3) -- (4, 2) -- (5, 3) -- (6, 2) -- (7, 3) -- (8, 2) -- (9, 1) -- (10, 2) -- (11, 3) -- (12, 2) -- (13, 1) -- (14, 2) -- (15, 1) -- (16, 0);
        \draw[thick, orange] (0, 0) -- (1, 1) -- (2, 2) -- (3, 3) -- (4, 2) -- (5, 1) -- (6, 0) -- (7, 1) -- (8, 2) -- (9, 1) -- (10, 0) -- (11, 1) -- (12, 2) -- (13, 1) -- (14, 0) -- (15, 1) -- (16, 0);
        \node at (0 + 0.4, 0 + 0.75) {\scriptsize 8};
        \node at (1 + 0.4, 1 + 0.75) {\scriptsize 7};
        \node[circle, draw = red, inner sep = 1pt] at (2 + 0.4, 2 + 0.75) {\scriptsize 6};
        \node at (3 + 0.6, 3 - 0.25) {\scriptsize 8};
        \node at (4 + 0.6, 2 - 0.25) {\scriptsize 7};
        \node at (5 + 0.6, 1 - 0.25) {\scriptsize 6};
        \node at (6 + 0.4, 0 + 0.75) {\scriptsize 5};
        \node at (7 + 0.4, 1 + 0.75) {\scriptsize 4};
        \node[circle, draw = red, inner sep = 1pt] at (8 + 0.6, 2 - 0.25) {\scriptsize 5};
        \node at (9 + 0.6, 1 - 0.25) {\scriptsize 4};
        \node at (10 + 0.4, 0 + 0.75) {\scriptsize 3};
        \node at (11 + 0.4, 1 + 0.75) {\scriptsize 2};
        \node at (12 + 0.6, 2 - 0.25) {\scriptsize 3};
        \node at (13 + 0.6, 1 - 0.25) {\scriptsize 2};
        \node at (14 + 0.4, 0 + 0.75) {\scriptsize 1};
        \node at (15 + 0.6, 1 - 0.25) {\scriptsize 1};

        \draw[ultra thick, dotted, red] (5, 1) -- (6, 2) -- (7, 1);
        \node at (5 + 0.4, 1 + 0.75) {\color{red} \scriptsize 6};
        \node at (6 + 0.6, 2 - 0.25) {\color{red} \scriptsize 5};
    \end{tikzpicture}\\[0.2cm]
    $\downarrow$\\[0.4cm]
    \begin{tikzpicture}[scale = 0.75, yscale = 0.9]
        \draw[dotted] (0, 0) grid (16, 3);
        \draw[thick, blue!50] (0, 0) -- (1, 1) -- (2, 2) -- (3, 3) -- (4, 2) -- (5, 3) -- (6, 2) -- (7, 3) -- (8, 2) -- (9, 1) -- (10, 2) -- (11, 3) -- (12, 2) -- (13, 1) -- (14, 2) -- (15, 1) -- (16, 0);
        \draw[thick, orange] (0, 0) -- (1, 1) -- (2, 2) -- (3, 3) -- (4, 2) -- (5, 1) -- (6, 2) -- (7, 1) -- (8, 2) -- (9, 1) -- (10, 0) -- (11, 1) -- (12, 2) -- (13, 1) -- (14, 0) -- (15, 1) -- (16, 0);
        \node at (0 + 0.4, 0 + 0.75) {\scriptsize 8};
        \node at (1 + 0.4, 1 + 0.75) {\scriptsize 7};
        \node at (2 + 0.4, 2 + 0.75) {\scriptsize 5};
        \node at (3 + 0.6, 3 - 0.25) {\scriptsize 8};
        \node at (4 + 0.6, 2 - 0.25) {\scriptsize 7};
        \node at (5 + 0.4, 1 + 0.75) {\scriptsize 6};
        \node at (6 + 0.6, 2 - 0.25) {\scriptsize 5};
        \node at (7 + 0.4, 1 + 0.75) {\scriptsize 4};
        \node at (8 + 0.6, 2 - 0.25) {\scriptsize 6};
        \node at (9 + 0.6, 1 - 0.25) {\scriptsize 4};
        \node at (10 + 0.4, 0 + 0.75) {\scriptsize 3};
        \node at (11 + 0.4, 1 + 0.75) {\scriptsize 2};
        \node at (12 + 0.6, 2 - 0.25) {\scriptsize 3};
        \node at (13 + 0.6, 1 - 0.25) {\scriptsize 2};
        \node at (14 + 0.4, 0 + 0.75) {\scriptsize 1};
        \node at (15 + 0.6, 1 - 0.25) {\scriptsize 1};
    \end{tikzpicture}\\[0.2cm]
    $\downarrow$\\[0.4cm]
    \begin{tikzpicture}[scale = 0.75, yscale = 0.9]
        \draw[dotted] (0, 0) grid (16, 3);
        \draw[thick, blue!50] (0, 0) -- (1, 1) -- (2, 2) -- (3, 3) -- (4, 2) -- (5, 3) -- (6, 2) -- (7, 3) -- (8, 2) -- (9, 1) -- (10, 2) -- (11, 3) -- (12, 2) -- (13, 1) -- (14, 2) -- (15, 1) -- (16, 0);
        \draw[thick, orange] (0, 0) -- (1, 1) -- (2, 2) -- (3, 3) -- (4, 2) -- (5, 1) -- (6, 2) -- (7, 1) -- (8, 2) -- (9, 1) -- (10, 0) -- (11, 1) -- (12, 2) -- (13, 1) -- (14, 0) -- (15, 1) -- (16, 0);
        \node at (0 + 0.4, 0 + 0.75) {\scriptsize 8};
        \node[circle, draw = red, inner sep = 1pt] at (1 + 0.4, 1 + 0.75) {\scriptsize 7};
        \node at (2 + 0.4, 2 + 0.75) {\scriptsize 5};
        \node at (3 + 0.6, 3 - 0.25) {\scriptsize 8};
        \node at (4 + 0.6, 2 - 0.25) {\scriptsize 7};
        \node at (5 + 0.4, 1 + 0.75) {\scriptsize 6};
        \node at (6 + 0.6, 2 - 0.25) {\scriptsize 5};
        \node at (7 + 0.4, 1 + 0.75) {\scriptsize 4};
        \node[circle, draw = red, inner sep = 1pt] at (8 + 0.6, 2 - 0.25) {\scriptsize 6};
        \node at (9 + 0.6, 1 - 0.25) {\scriptsize 4};
        \node at (10 + 0.4, 0 + 0.75) {\scriptsize 3};
        \node at (11 + 0.4, 1 + 0.75) {\scriptsize 2};
        \node at (12 + 0.6, 2 - 0.25) {\scriptsize 3};
        \node at (13 + 0.6, 1 - 0.25) {\scriptsize 2};
        \node at (14 + 0.4, 0 + 0.75) {\scriptsize 1};
        \node at (15 + 0.6, 1 - 0.25) {\scriptsize 1};

        \draw[ultra thick, dotted, red] (4, 2) -- (5, 3) -- (6, 2);
        \node at (4 + 0.4, 2 + 0.75) {\color{red} \scriptsize 7};
        \node at (5 + 0.6, 3 - 0.25) {\color{red} \scriptsize 6};
    \end{tikzpicture}\\[0.2cm]
    $\downarrow$\\[0.2cm]
    $\vdots$\\[0.2cm]
    $\downarrow$\\[0.4cm]
    \begin{tikzpicture}[scale = 0.75, yscale = 0.9]
        \draw[dotted] (0, 0) grid (16, 3);
        \draw[thick, blue!50] (0, 0) -- (1, 1) -- (2, 2) -- (3, 3) -- (4, 2) -- (5, 3) -- (6, 2) -- (7, 3) -- (8, 2) -- (9, 1) -- (10, 2) -- (11, 3) -- (12, 2) -- (13, 1) -- (14, 2) -- (15, 1) -- (16, 0);
        \draw[thick, orange] (0, 0) -- (1, 1) -- (2, 2) -- (3, 3) -- (4, 2) -- (5, 3) -- (6, 2) -- (7, 3) -- (8, 2) -- (9, 1) -- (10, 2) -- (11, 3) -- (12, 2) -- (13, 1) -- (14, 0) -- (15, 1) -- (16, 0);
        \node at (0 + 0.4, 0 + 0.75) {\scriptsize 8};
        \node at (1 + 0.4, 1 + 0.75) {\scriptsize 6};
        \node at (2 + 0.4, 2 + 0.75) {\scriptsize 2};
        \node at (3 + 0.6, 3 - 0.25) {\scriptsize 8};
        \node at (4 + 0.4, 2 + 0.75) {\scriptsize 7};
        \node at (5 + 0.6, 3 - 0.25) {\scriptsize 6};
        \node at (6 + 0.4, 2 + 0.75) {\scriptsize 3};
        \node at (7 + 0.6, 3 - 0.25) {\scriptsize 2};
        \node at (8 + 0.6, 2 - 0.25) {\scriptsize 7};
        \node at (9 + 0.4, 1 + 0.75) {\scriptsize 5};
        \node at (10 + 0.4, 2 + 0.75) {\scriptsize 4};
        \node at (11 + 0.6, 3 - 0.25) {\scriptsize 3};
        \node at (12 + 0.6, 2 - 0.25) {\scriptsize 5};
        \node at (13 + 0.6, 1 - 0.25) {\scriptsize 4};
        \node at (14 + 0.4, 0 + 0.75) {\scriptsize 1};
        \node at (15 + 0.6, 1 - 0.25) {\scriptsize 1};
    \end{tikzpicture}\\[0.2cm]
    $\downarrow$\\[0.4cm]
    \begin{tikzpicture}[scale = 0.75, yscale = 0.9]
        \draw[dotted] (0, 0) grid (16, 3);
        \draw[thick, blue!50] (0, 0) -- (1, 1) -- (2, 2) -- (3, 3) -- (4, 2) -- (5, 3) -- (6, 2) -- (7, 3) -- (8, 2) -- (9, 1) -- (10, 2) -- (11, 3) -- (12, 2) -- (13, 1) -- (14, 2) -- (15, 1) -- (16, 0);
        \draw[thick, orange] (0, 0) -- (1, 1) -- (2, 2) -- (3, 3) -- (4, 2) -- (5, 3) -- (6, 2) -- (7, 3) -- (8, 2) -- (9, 1) -- (10, 2) -- (11, 3) -- (12, 2) -- (13, 1) -- (14, 0) -- (15, 1) -- (16, 0);
        \node at (0 + 0.4, 0 + 0.75) {\scriptsize 8};
        \node at (1 + 0.4, 1 + 0.75) {\scriptsize 6};
        \node[circle, draw = red, inner sep = 1pt] at (2 + 0.4, 2 + 0.75) {\scriptsize 2};
        \node at (3 + 0.6, 3 - 0.25) {\scriptsize 8};
        \node at (4 + 0.4, 2 + 0.75) {\scriptsize 7};
        \node at (5 + 0.6, 3 - 0.25) {\scriptsize 6};
        \node[circle, draw = red, inner sep = 1pt] at (6 + 0.4, 2 + 0.75) {\scriptsize 3};
        \node[circle, draw = red, inner sep = 1pt] at (7 + 0.6, 3 - 0.25) {\scriptsize 2};
        \node at (8 + 0.6, 2 - 0.25) {\scriptsize 7};
        \node at (9 + 0.4, 1 + 0.75) {\scriptsize 5};
        \node[circle, draw = red, inner sep = 1pt] at (10 + 0.4, 2 + 0.75) {\scriptsize 4};
        \node[circle, draw = red, inner sep = 1pt] at (11 + 0.6, 3 - 0.25) {\scriptsize 3};
        \node at (12 + 0.6, 2 - 0.25) {\scriptsize 5};
        \node at (13 + 0.6, 1 - 0.25) {\scriptsize 4};
        \node at (14 + 0.4, 0 + 0.75) {\scriptsize 1};
        \node[circle, draw = red, inner sep = 1pt] at (15 + 0.6, 1 - 0.25) {\scriptsize 1};
        
        \draw[ultra thick, dotted, red] (13, 1) -- (14, 2) -- (15, 1);
        \node at (13 + 0.4, 1 + 0.75) {\color{red} \scriptsize 4};
        \node at (14 + 0.6, 2 - 0.25) {\color{red} \scriptsize 3};
    \end{tikzpicture}\\[0.2cm]
    $\downarrow$\\[0.4cm]
    \begin{tikzpicture}[scale = 0.75, yscale = 0.9]
        \draw[dotted] (0, 0) grid (16, 3);
        \draw[thick, blue] (0, 0) -- (1, 1) -- (2, 2) -- (3, 3) -- (4, 2) -- (5, 3) -- (6, 2) -- (7, 3) -- (8, 2) -- (9, 1) -- (10, 2) -- (11, 3) -- (12, 2) -- (13, 1) -- (14, 2) -- (15, 1) -- (16, 0);
        \node at (0 + 0.4, 0 + 0.75) {\scriptsize 8};
        \node at (1 + 0.4, 1 + 0.75) {\scriptsize 6};
        \node at (2 + 0.4, 2 + 0.75) {\scriptsize 1};
        \node at (3 + 0.6, 3 - 0.25) {\scriptsize 8};
        \node at (4 + 0.4, 2 + 0.75) {\scriptsize 7};
        \node at (5 + 0.6, 3 - 0.25) {\scriptsize 6};
        \node at (6 + 0.4, 2 + 0.75) {\scriptsize 2};
        \node at (7 + 0.6, 3 - 0.25) {\scriptsize 1};
        \node at (8 + 0.6, 2 - 0.25) {\scriptsize 7};
        \node at (9 + 0.4, 1 + 0.75) {\scriptsize 5};
        \node at (10 + 0.4, 2 + 0.75) {\scriptsize 3};
        \node at (11 + 0.6, 3 - 0.25) {\scriptsize 2};
        \node at (12 + 0.6, 2 - 0.25) {\scriptsize 5};
        \node at (13 + 0.4, 1 + 0.75) {\scriptsize 4};
        \node at (14 + 0.6, 2 - 0.25) {\scriptsize 3};
        \node at (15 + 0.6, 1 - 0.25) {\scriptsize 4};
    \end{tikzpicture}
    \caption{\Cref{valgo} applied to a Dyck path.}
    \label{fig:valgoex}
\end{figure}

SageMath \cite{sagemath} code implementing \Cref{valgo} to return $\vperm d$ can be found at
\begin{center}
    \href{https://github.com/KrishnaMenon1998/Bouncing-canons}{\texttt{https://github.com/KrishnaMenon1998/Bouncing-canons}}.
\end{center}

\begin{theorem}\label{thm:vperm}
    For any Dyck path $d$, we have $\vperm d \in M_d$.
\end{theorem}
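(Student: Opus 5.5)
The plan is to run \Cref{valgo} and show that the number of descents never changes along the way. At the start, $c = \can(\bounce d, \delta_n)$. Since $\bounce d$ is its own bounce path and it is labeled decreasingly, its non-descents are exactly its $\bpk d$ peaks, so $\des(c) = 2n-1-\bpk d$. By \Cref{md} this equals $m_d$, because $d$ and $\bounce d$ have the same bounce path. It therefore suffices to prove the following: every valley swap performed by \Cref{valgo} leaves the descent set of the canon permutation unchanged, viewed as a subset of $[2n-1]$. Then, at termination, $\des(d,\vperm d) = m_d$, i.e.\ $\vperm d \in M_d$.

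To prove this I would first make the valley swap explicit. Suppose the swapped valley occupies positions $p,p+1$, with the down-step labeled $j$ and the up-step labeled $i$, where $i<j$. After the swap, the up-step at position $p$ carries $j$ and the down-step at $p+1$ carries $j-1$. The relabeling $\phi$ sends $i\mapsto j$, sends $k\mapsto k-1$ for $k\in[i+1,j]$, and fixes everything else. This $\phi$ is order preserving on $[n]\setminus\{i\}$. Consequently, an adjacent pair at positions $q,q+1$ can change comparison only in two situations. The first is that the pair involves one of the moved positions $p,p+1$. The second is that the pair involves the \emph{other} occurrence of $i$, namely the down-step corresponding to the swapped up-step, which lies to the right, and its neighbour carries a label in $(i,j]$. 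Position $p$ itself remains a descent: before the swap it was $j>i$, and after it is $j>j-1$.

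The content of the proof is then an invariant maintained by \Cref{valgo}, which I would establish by induction on the number of swaps. The invariant I would aim for has two parts.
\begin{enumerate}
\item[(a)] The non-descents of $c$ are exactly the positions of the down-steps of the peaks of $\bounce d$ (minus one). Equivalently, within each bounce factor of $d$ the labels on $c$ decrease, as in the proof of \Cref{md}.
\item[(b)] Every valley of $c$ that is eligible, meaning its swap stays under $d$, has a larger label on its down-step, with $j$ exceeding $i$.
\end{enumerate}
Given (a) and (b), I would check the boundary comparisons at $p-1,p$ and $p+1,p+2$ directly. The label before position $p$ was a down-step label greater than $j$. The label after position $p+1$ was compared with $i$ and must now be compared with $j-1$. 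Here the fact that both neighbours lie in the same bounce factor, and that that factor is decreasing, should settle both comparisons.

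I expect the main obstacle to be the remaining case: the neighbours of the other copy of $i$ must not carry labels in $(i,j]$, for otherwise the descent status at that copy would flip. This is exactly where the leftmost choice of valley in \Cref{valgo} must be used. My first attempt would be to show the following: when the leftmost eligible valley is chosen, every label in $(i,j]$ sits on a step lying between position $p$ and the corresponding down-step of $i$, in a stretch that the decreasing invariant forces to be contiguous. In that situation the neighbours of the down-step labeled $i$ carry labels either greater than $j$ or less than $i$. Verifying this cleanly will likely require tracking where the unswapped part of $d$ lies relative to the current path inside the rectangles of \Cref{fig:dyckfrombounce}.
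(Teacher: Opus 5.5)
\textbf{There is a genuine gap: the decisive case is never proved, and the claim you offer for it is false.}

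Your overall framework is right. The descent set really is preserved by every swap of \Cref{valgo}. Your invariants (a) and (b) coincide with the paper's statements about decreasing bounce factors and about valley labels. Your analysis of which adjacent pairs can change status under $\phi$ is also correct.

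The failure is in the case you flag as the main obstacle. Take the last swap in Figure~\ref{fig:valgoex}: there $j=4$, $i=1$, $p=14$, and the second copy of $1$ is at position $16$. Yet the labels $2,3\in(i,j)$ sit at positions $3,7,8,12$, all to the left of $p$. So labels in $(i,j]$ need not lie between $p$ and the second copy of $i$.

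The same swap also breaks your boundary check. Position $p+2=16$ lies in the next bounce factor, not in the factor of $p+1$, and it is exactly the second copy of $i$. As written, nothing rules out a flip at $(p+1,p+2)$ or at the pairs around the second copy of $i$.

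\textbf{The missing observation.} Let $F_k$ be the bounce factor containing the valley.
\begin{itemize}
\item A valley cannot straddle two factors, because every factor after the first begins with a down-step. In particular, (b) follows from (a).
\item Since $F_k$ is decreasing and contains the adjacent labels $j,i$, none of its labels lies in the open interval $(i,j)$.
\item The leftmost rule processes the bounce factors one at a time, because the current path meets $d$ at every bounce peak. Hence $F_{k+1}$ still has its bounce shape $D^{c_k}U^{c_{k+1}}$.
\item The down-steps of $F_{k+1}$, including the position $q$ of the second copy of $i$, carry the up-step labels of $F_k$ in order. Its up-steps come after $q$, so by (a) they carry labels below $i$.
\end{itemize}
Therefore no label on $F_k\cup F_{k+1}$ lies in $(i,j)$, and $j$ occurs only at $p$ and in $F_{k-1}$.

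The neighbours of $p+1$ and of $q$ all lie in $F_k\cup F_{k+1}$. So none of the pairs $(p+1,p+2)$, $(q-1,q)$, $(q,q+1)$ changes status. When $q=p+2$, the plateau $i,i$ simply becomes the ascent $j-1,j$. The pair $(p-1,p)$ never changes status at all, because $\phi$ fixes labels above $j$ and sends $[1,j]\setminus\{i\}$ below $j$.

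With this, your swap-by-swap induction goes through. It is a more local argument than the paper's, which reduces to $\bpk d=2$ and tracks all labels of a factor through its swaps at once (\Cref{fig:videa2}).
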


\begin{proof}
    We will prove the following three statements:
    \begin{enumerate}
        \item Suppose that $c'$ is obtained by performing a valley swap to a canon permutation $c$ where the down-step has label $j$ and the up-step has label $i$ where $i < j$. 
        The position immediately before the second copy of $i$ in $c$ is the only place where a descent in $c$ might not remain a descent in $c'$.\label{s1}

        \item Any valley being swapped has a larger label on its down-step than its up-step, when performing \Cref{valgo}.\label{s2}

        \item During any stage in \Cref{valgo}, each bounce factor is labeled with a decreasing sequence.\label{s3}
    \end{enumerate}
    Statement \ref{s1} is an easy consequence of the definition of valley swaps. 
    Just as for $\bperm d$, statement \ref{s3} will prove the result. 
    However, we will also need statement \ref{s2} in order to apply statement \ref{s1} in what follows. 
    Also, note that any Dyck path appearing in the course of \Cref{valgo} has the same bounce path.

    During \Cref{valgo}, all required valleys in the second bounce factor are swapped, and then those in the third, and so on (the first bounce factor has no valleys). 
    Also note that any up-step in a bounce factor has its corresponding down-step in the next bounce factor. 
    Hence, if statement \ref{s2} is true, then statement \ref{s1} implies that performing valley swaps in some bounce factor during the algorithm will not change the fact that the previous bounce factors are labeled with decreasing sequences.

    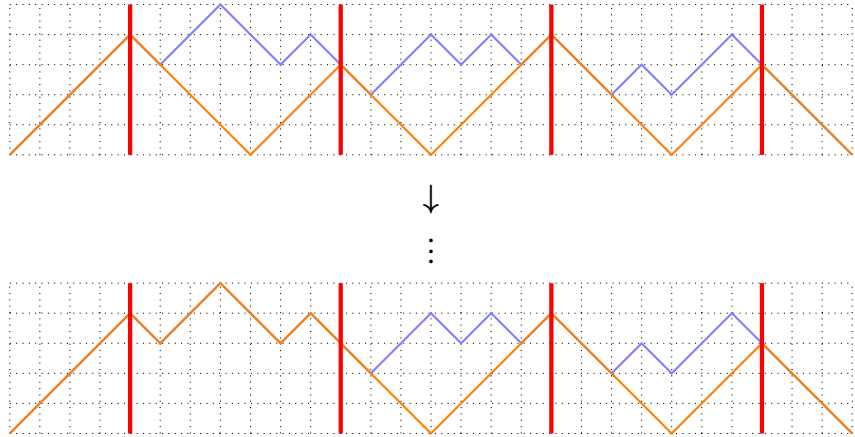
\begin{figure}[H]
        \centering
        \begin{tikzpicture}[scale = 0.4]
            \draw[dotted] (0, 0) grid (28, 5);
            \draw[thick,blue!50] (0, 0) -- (1, 1) -- (2, 2) -- (3, 3) -- (4, 4) -- (5, 3) -- (6, 4) -- (7, 5) -- (8, 4) -- (9, 3) -- (10, 4) -- (11, 3) -- (12, 2) -- (13, 3) -- (14, 4) -- (15, 3) -- (16, 4) -- (17, 3) -- (18, 4) -- (19, 3) -- (20, 2) -- (21, 3) -- (22, 2) -- (23, 3) -- (24, 4) -- (25, 3) -- (26, 2) -- (27, 1) -- (28, 0);
            \draw[thick,orange] (0, 0) -- (1, 1) -- (2, 2) -- (3, 3) -- (4, 4) -- (5, 3) -- (6, 2) -- (7, 1) -- (8, 0) -- (9, 1) -- (10, 2) -- (11, 3) -- (12, 2) -- (13, 1) -- (14, 0) -- (15, 1) -- (16, 2) -- (17, 3) -- (18, 4) -- (19, 3) -- (20, 2) -- (21, 1) -- (22, 0) -- (23, 1) -- (24, 2) -- (25, 3) -- (26, 2) -- (27, 1) -- (28, 0);

            \draw[ultra thick,red] (4, 0) -- (4, 5);
            \draw[ultra thick,red] (11, 0) -- (11, 5);
            \draw[ultra thick,red] (18, 0) -- (18, 5);
            \draw[ultra thick,red] (25, 0) -- (25, 5);
        \end{tikzpicture}\\[0.1cm]
        $\downarrow$\\[0.1cm]
        $\vdots$\\[0.2cm]
        \begin{tikzpicture}[scale = 0.4]
            \draw[dotted] (0, 0) grid (28, 5);
            \draw[thick,blue!50] (0, 0) -- (1, 1) -- (2, 2) -- (3, 3) -- (4, 4) -- (5, 3) -- (6, 4) -- (7, 5) -- (8, 4) -- (9, 3) -- (10, 4) -- (11, 3) -- (12, 2) -- (13, 3) -- (14, 4) -- (15, 3) -- (16, 4) -- (17, 3) -- (18, 4) -- (19, 3) -- (20, 2) -- (21, 3) -- (22, 2) -- (23, 3) -- (24, 4) -- (25, 3) -- (26, 2) -- (27, 1) -- (28, 0);
            \draw[thick,orange] (0, 0) -- (1, 1) -- (2, 2) -- (3, 3) -- (4, 4) -- (5, 3) -- (6, 4) -- (7, 5) -- (8, 4) -- (9, 3) -- (10, 4) -- (11, 3) -- (12, 2) -- (13, 1) -- (14, 0) -- (15, 1) -- (16, 2) -- (17, 3) -- (18, 4) -- (19, 3) -- (20, 2) -- (21, 1) -- (22, 0) -- (23, 1) -- (24, 2) -- (25, 3) -- (26, 2) -- (27, 1) -- (28, 0);
            \draw[ultra thick,red] (4, 0) -- (4, 5);
            \draw[ultra thick,red] (11, 0) -- (11, 5);
            \draw[ultra thick,red] (18, 0) -- (18, 5);
            \draw[ultra thick,red] (25, 0) -- (25, 5);
        \end{tikzpicture}
        \caption{Suppose all valleys in the second bounce factor have been swapped while performing \Cref{valgo} (labels omitted). We have to show that the labels in the third bounce factor still form a decreasing sequence.}
        \label{fig:videa1}
    \end{figure}

    We will show that during \Cref{valgo}, after swapping all the valleys in one bounce factor, the labels in the next bounce factor remain a decreasing sequence. 
    Combining this with the observations above will prove the result (see \Cref{fig:videa1}). 
    Since we only care about the relative order of the labels, we can reduce the problem to proving statements \ref{s2} and \ref{s3} for when $\bpk d = 2$.

    To do this, we note how valley swapping changes the labels on the valley being swapped. 
    Suppose the down-step has label $j$ and the up-step has label $i$ with $i < j$. 
    After the valley swap, the down-step becomes an up-step labeled $j$ and the up-step becomes a down-step labeled $j - 1$. 
    Filling in the grid between $d$ and the bounce path, we can use this idea to see what labels the various steps along the way have during the algorithm (see \Cref{fig:videa2}). 
    We use $b_1, b_2, \ldots$ to label the down-steps in the second bounce factor and use $a_1, a_2, \ldots$ for the up-steps. 
    If $d \in \Dyck_n$, we actually have $b_1 = n, b_2 = n - 1, \ldots$ but we avoid these labels to show that we only use the fact that $b_1 > b_2 > \cdots > a_1 > a_2 > \cdots$ (relative order of the labels).

    \begin{figure}[H]
        \centering
        \begin{tikzpicture}[scale = 1.2]
            \draw[dotted] (4, 0) grid (15, 5);
            \draw[thick,blue!50] (7, 3) -- node[above,sloped] {\color{black}\tiny $b_3$} (8, 4) -- node[above,sloped] {\color{black}\tiny $b_3 - 1$}(9, 5) -- node[above,sloped] {\color{black}\tiny $b_3 - 2$}(10, 4) -- node[above,sloped] {\color{black}\tiny $b_4 - 2$}(11, 5) -- node[above,sloped] {\color{black}\tiny $b_4 - 3$}(12, 4) -- node[above,sloped] {\color{black}\tiny $b_5 - 3$}(13, 3);
            \draw[thick, orange] (4, 4) -- (5, 5) --node[above,sloped] {\color{black}\tiny $b_1$} (6, 4) --node[above,sloped] {\color{black}\tiny $b_2$} (7, 3) --node[above,sloped] {\tiny $b_3$} (8, 2) --node[above,sloped] {\tiny $b_4$} (9, 1) --node[above,sloped] {\tiny $b_5$} (10, 0) --node[above,sloped] {\tiny $a_1$} (11, 1) -- node[above,sloped] {\tiny $a_2$}(12, 2) -- node[above,sloped] {\tiny $a_3$}(13, 3) -- node[above,sloped] {\color{black} \tiny $a_4$}(14, 4) -- (15, 3);
            \draw[thick,dashed,orange] (8, 2) -- node[above,sloped] {\tiny $b_4$}(9, 3) -- node[above,sloped] {\tiny $b_4 - 1$}(10, 4);
            \draw[thick,dashed,orange] (9, 1) -- node[above,sloped] {\tiny $b_5$}(10, 2) -- node[above,sloped] {\tiny $b_5 - 1$}(11, 3) -- node[above,sloped] {\tiny $b_5 - 2$} (12, 4);
            \draw[thick,dashed,orange] (11, 1) -- node[above,sloped] {\tiny $b_5 - 1$}(10, 2) -- node[above,sloped] {\tiny $b_4 - 1$}(9, 3) -- node[above,sloped] {\tiny $b_3 - 1$}(8, 4);
            \draw[thick,dashed,orange] (10, 4) -- node[above,sloped] {\tiny $b_4 - 2$}(11, 3) -- node[above,sloped] {\tiny $b_5 - 2$}(12, 2);
            \draw[ultra thick, red] (5, 0) -- (5, 5);
            \draw[ultra thick, red] (14, 0) -- (14, 5);
        \end{tikzpicture}
        \caption{How the labels in a bounce factor change as the valleys in it are swapped. The final labels are in black}
        \label{fig:videa2}
    \end{figure}

    From \Cref{fig:videa2}, it can be seen that any valley that is swapped during the algorithm must have a larger label on its down-step. 
    Also, the labels in the second bounce factor end up in a decreasing sequence. 
    In particular, the labels on the up-steps in the second bounce factor, which match the sequence of labels of the down-steps in the third bounce factor, are decreasing. 
    This proves statements \ref{s2} and \ref{s3} when $\bpk d = 2$ and hence the theorem.
\end{proof}

An example of a Dyck path $d$ for which $\bperm d \neq \vperm d$ is $d = U(UD)^4D$ where $\bperm d = 52413$ and $\vperm d = 53412$. Even though in this simple example, $\vperm$ can arise from $\bperm$ with a single transposition, the difference between $\bperm$ and $\vperm$ becomes more apparent for larger Dyck paths with more than $2$ components. For instance, for $d = (UUD)^2DUD^2U(UD)^2U^2D^3U(UD)^2D$, we compute $\bperm d = 13\ 1\ 12\ 2\ 11\ 10\ 8\ 9\ 7\ 6\ 5\ 3\ 4$ and $\vperm d = 13\ 9\ 12\ 10\ 11\ 8\ 6\ 7\ 5\ 4\ 3\ 1\ 2$.

In the next section, we will prove that $\bperm d $ and $\vperm d$ are in fact two different linear extensions of an $n$-element poset, which depends on $d$ and $\bounce d$. This fact, together with an extension of the method for computing $\vperm d$ 
 will allow us to characterize the permutations that contribute to the leading coefficient of the descent polynomial $C_d$.

\begin{remark}\label{genbperm}
One could define a generalization of $\bperm$ that covers more than one maximizer to include both $\bperm$ and $\vperm$. 
Instead of labeling the first unlabeled step with $i-1$ in Algorithm \ref{algo}, one could use a label $i-k$ as long as both copies of $i-1,\ldots,i-k+1$ appear to the left of the up-step pending labeling. 
We do not provide a proof of this but we have verified it with code up to semilength $n = 14$.
\end{remark}

\section{The leading coefficient of $C_d$}\label{sec: interpret leader}

So far we have discussed two methods that provide two, often different, canon permutations with a maximum number of descents associated to a given Dyck path $d$. 
It would be natural to wonder if similar methods for different paths under $d$ could provide more maximizers, i.e., permutations that contribute to the leading coefficient of $C_d$. 
One immediate example is the \emph{reverse bounce path} of $d$, which arises by reversing $d$, computing its bounce path and reversing it back. 
Our methods for computing $\bperm d$ and $\vperm d$ could be adjusted to the {reverse bounce path} of $d$ and yield more maximizers. 
Below, we identify a set of Dyck paths under $d$, such that all maximizers of $d$ can arise through methods similar to $\vperm$ and $\bperm$ applied to these paths.

\begin{definition}
    For $d \in \Dyck_n$, define $B_d$ to be the set of $b \in \Dyck_n$ such that
    \begin{itemize}
        \item $b$ is under $d$,
        \item $\pk b = \bpk d$ with each peak of $b$ touching $d$, and
        \item each valley of $b$ is either also a valley of $d$ or is at height $0$.
    \end{itemize}
\end{definition}
Another way to view the Dyck paths in $B_d$ is as follows: 
Dyck paths $b$ under $d$ with $\pk b = \bpk d$ such that if $b$ deviates from $d$, then $b$ must fall to the $x$-axis, and then rise until it touches $d$ again. 
An example of a path in $B_d$ is shown in \Cref{fig:Bdex1}.

\begin{figure}[H]
    \centering
    \begin{tikzpicture}[scale = 0.6]
        \draw[dotted] (0, 0) grid (16, 4);
        \draw[thick, blue!50] (0, 0) -- (1, 1) -- (2, 2) -- (3, 1) -- (4, 2) -- (5, 3) -- (6, 4) -- (7, 3) -- (8, 2) -- (9, 3) -- (10, 2) -- (11, 3) -- (12, 2) -- (13, 1) -- (14, 2) -- (15, 1) -- (16, 0);
        \draw[red] (0, 0) -- (1, 1) -- (2, 2) -- (3, 1) -- (4, 2) -- (5, 3) -- (6, 2) -- (7, 1) -- (8, 0) -- (9, 1) -- (10, 2) -- (11, 1) -- (12, 0) -- (13, 1) -- (14, 2) -- (15, 1) -- (16, 0);
    \end{tikzpicture}
    \caption{A Dyck path $\textcolor{blue}{d}$ and a path $\textcolor{red}{b}\in B_d$ under it.}
    \label{fig:Bdex1}
\end{figure}

Both $\bounce d$ as well as the reverse bounce path of $d$ (which can be the same as $\bounce d$) lie in $B_d$. 
Note that $d$ is in $B_d$ precisely when $\pk d = \bpk d$, i.e., when the decreasing permutation is in $M_d$ (\Cref{cor: descreasing is maximizer}).

\begin{example}\label{ex:Bd}
    For $d = U^2DU^2D^2(UD)^2D$, $B_d$ has $3$ paths, shown in \Cref{fig:Bdex2}.
\end{example}

\begin{figure}[H]
    \centering
    \begin{tikzpicture}[scale = 0.5]
        \draw[dotted] (0, 0) grid (12, 3);
        \draw[thick, blue!50] (0, 0) -- (1, 1) -- (2, 2) -- (3, 1) -- (4, 2) -- (5, 3) -- (6, 2) -- (7, 1) -- (8, 2) -- (9, 1) -- (10, 2) -- (11, 1) -- (12, 0);
        \draw[red] (0, 0) -- (1, 1) -- (2, 2) -- (3, 1) -- (4, 0) -- (5, 1) -- (6, 2) -- (7, 1) -- (8, 0) -- (9, 1) -- (10, 2) -- (11, 1) -- (12, 0);
    \end{tikzpicture}
    \vspace{0.3cm}
    
    \begin{tikzpicture}[scale = 0.5]
        \draw[dotted] (0, 0) grid (12, 3);
        \draw[thick, blue!50] (0, 0) -- (1, 1) -- (2, 2) -- (3, 1) -- (4, 2) -- (5, 3) -- (6, 2) -- (7, 1) -- (8, 2) -- (9, 1) -- (10, 2) -- (11, 1) -- (12, 0);
        \draw[red] (0, 0) -- (1, 1) -- (2, 2) -- (3, 1) -- (4, 2) -- (5, 3) -- (6, 2) -- (7, 1) -- (8, 0) -- (9, 1) -- (10, 2) -- (11, 1) -- (12, 0);
    \end{tikzpicture}
    \hspace{0.2cm}
    \begin{tikzpicture}[scale = 0.5]
        \draw[dotted] (0, 0) grid (12, 3);
        \draw[thick, blue!50] (0, 0) -- (1, 1) -- (2, 2) -- (3, 1) -- (4, 2) -- (5, 3) -- (6, 2) -- (7, 1) -- (8, 2) -- (9, 1) -- (10, 2) -- (11, 1) -- (12, 0);
        \draw[red] (0, 0) -- (1, 1) -- (2, 0) -- (3, 1) -- (4, 2) -- (5, 3) -- (6, 2) -- (7, 1) -- (8, 0) -- (9, 1) -- (10, 2) -- (11, 1) -- (12, 0);
    \end{tikzpicture}
    \caption{The paths in $B_d$ for $d$ as in \Cref{ex:Bd}.}
    \label{fig:Bdex2}
\end{figure}
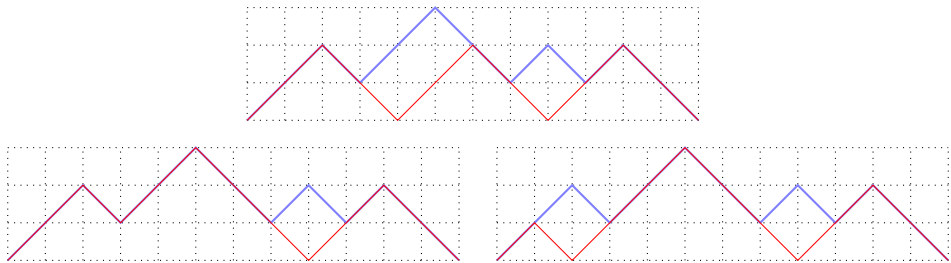

\begin{theorem}\label{Dessetchar}
    For any Dyck path $d \in \Dyck_n$, we have
    \begin{equation*}
        \{\Des(d, \sigma) \mid \sigma \in M_d\} = \{\Des(b,\delta_n) \mid b \in B_d\}.
    \end{equation*}
    That is, the possible descent sets for descent-maximizers of $d$ are the same as the possible peak sets for Dyck paths in $B_d$.
\end{theorem}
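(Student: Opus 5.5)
Both sides of the claimed equality can be described through \emph{non-descent sets}. By \Cref{md}, every $\sigma\in M_d$ has exactly $\bpk d$ positions $p\in[2n-1]$ that are not descents of $\can(d,\sigma)$. For $b\in\Dyck_n$, the non-descents of $\can(b,\delta_n)$ are exactly the peaks of $b$, as in the proof of \Cref{cor: descreasing is maximizer}. Since $\pk b=\bpk d$ for $b\in B_d$, it suffices to prove two inclusions. First, for every $b\in B_d$ there is a $\sigma\in M_d$ whose non-descent positions are the peak positions of $b$. Second, the non-descent positions of any $\sigma\in M_d$ are the peak positions of some $b\in B_d$.

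\textbf{Inclusion $\supseteq$.} I would generalize \Cref{valgo} and \Cref{thm:vperm}, replacing $\bounce d$ by an arbitrary $b\in B_d$. Start from $\can(b,\delta_n)$, whose non-descents are precisely the peaks of $b$. Then perform valley swaps, always choosing the leftmost valley whose swap keeps the path under $d$, until $d$ is reached. The definition of $B_d$ provides the needed structure. The peaks of $b$ split $b$, and hence $d$, into factors that play the role of bounce factors. Inside each factor, $b$ falls to the axis and then rises to touch $d$. Any valley of $b$ not at height $0$ is already a valley of $d$ and is never swapped. So every swap takes place inside a region of exactly the shape in \Cref{fig:videa2}. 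I would then repeat the three statements in the proof of \Cref{thm:vperm} verbatim: the local behaviour of a valley swap, the fact that the swapped down-step always carries the larger label, and the fact that each factor stays decreasing. This shows that descent positions are preserved, and since $\pk b=\bpk d$ the final labeling lies in $M_d$. The one point to check is that the up-steps of a factor correspond to down-steps in the following factor. For bounce factors this is automatic; here I would derive it from the requirement that each peak of $b$ touches $d$.

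\textbf{Inclusion $\subseteq$ (main obstacle).} Fix $\sigma\in M_d$ and let $N=\{p_1<\dots<p_k\}$, with $k=\bpk d$, be its non-descent positions. I would generalize the argument of \Cref{upperbound}. For each label $x$, the stretch of $\can(d,\sigma)$ from the up-step labeled $x$ to the down-step labeled $x$ must contain a non-descent. Hence $N$ is a hitting set for the family of intervals $I_x$ given by (up-step of $x$, down-step of $x$). The greedy hitting set is given by the peaks of $\bounce d$, so $k$ is the minimum size of such a hitting set, and $N$ is a \emph{minimum} hitting set. Between $p_j$ and $p_{j+1}$ the word is strictly decreasing. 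I would then show that each $p_j$ is a $UD$ position of $d$, i.e.\ a peak of $d$, arguing by minimality. If $p_j$ were preceded by a $U$ and followed by another $U$, or in the other configurations, one could slide $p_j$ and obtain a strictly decreasing stretch that contains both copies of some label, which is impossible.

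Given this, I would define $b$ as the unique Dyck path with peaks exactly at $p_1,\dots,p_k$ that agrees with $d$ near each peak and otherwise descends to the axis and climbs back. Concretely, between $p_j$ and $p_{j+1}$, $b$ follows $d$ downward as long as $d$ continues to descend and rises again along $d$ before $p_{j+1}$, dropping to height $0$ wherever the two pieces cannot be joined along $d$. The hardest part will be showing that this construction is well defined and stays under $d$. I would attack it with the hitting-set minimality together with the counting of up-steps against down-steps between consecutive $p_j$: every label whose up-step lies in the decreasing block $(p_j,p_{j+1}]$ must have its down-step after $p_{j+1}$, which forces the heights to balance so that $b$ reaches height $0$ exactly where needed. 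That balance gives the valley condition and shows $b\in B_d$. Finally $\Des(b,\delta_n)=[2n-1]\setminus N=\Des(d,\sigma)$, completing the proof.
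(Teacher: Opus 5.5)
There is a genuine gap in the inclusion $\subseteq$. It starts with your claim that every non-descent position $p_j$ of a maximizer is a $UD$ position of $d$, which is false. Take $d=UUUDUDDD$, so that $\bounce d=UUUDDDUD$, $\bpk d=2$ and $m_d=5$. \Cref{algo} gives $\bperm d=4213$, and $\can(d,4213)=42143213$ has its non-descents at positions $3$ and $7$. Position $7$ lies between two down-steps of $d$. In general the non-descents of $\can(d,\bperm d)$ sit at the peaks of $\bounce d$, which lie on $d$ but need not be peaks of $d$. This is exactly why $B_d$ only requires the peaks of $b$ to \emph{touch} $d$. So no sliding or minimality argument can move the $p_j$ onto peaks of $d$. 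Your recipe for $b$ (agree with $d$ near each peak, drop to the axis where needed) is therefore not the right object. The path $b$ is forced: its peaks are the points of $d$ with abscissae $p_1,\dots,p_k$, and it goes straight down from one peak and straight up to the next. What must be proved is that each valley obtained this way is on the axis or is a valley of $d$.

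Your hitting-set reformulation does settle the existence of $b$. The two lines from consecutive peaks meet at height $\geq 0$ exactly when no interval $I_x$ lies strictly between $p_j$ and $p_{j+1}$, and this is the paper's first step. However, the valley condition does not follow from minimality plus up/down counting. For the same $d$, $\{3,6\}$ is a minimum hitting set of the intervals $[1,3],[2,5],[3,6],[5,7]$. The only up-step in the block $(3,6]$ has its down-step after $6$, and the forced path $b=UUUDDUDD$ exists. Yet its valley $(5,1)$ is neither on the axis nor a valley of $d$. Indeed $\{3,6\}$ is not realized by any $\sigma$: descents at $1,2$ force $\sigma_1>\sigma_2>\sigma_3$, while a non-descent at $6$ forces $\sigma_2<\sigma_3$.

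The missing ingredient is the labels themselves. The paper writes them on the primitive factor of $\bounce d$ between $P$ and $Q$, and splits into cases according to whether an up-step of $F_P$ follows $P$. The decreasing stretches before $P$, between $P$ and $Q$, and after $Q$ then produce either a cycle $a_1>\dots>a_k>a_1$ or a second non-descent in $F_Q$.

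A smaller issue affects $\supseteq$. The property you intend to verify, that up-steps of one factor correspond to down-steps of the next, fails for general $b\in B_d$. For $d=UUDUUDDUDD$ and $b=UUDUDDUUDD$ from \Cref{fig:constBd}, the second up-step lies in the first factor and its down-step in the third. So the proof of \Cref{thm:vperm} cannot be repeated verbatim there, although the paper also only sketches this direction.
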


\begin{proof}
    The valley-swapping algorithm (\cref{valgo}) can be extended from $\bounce d$ to any Dyck path $b$ in $B_d$, where valleys are swapped (from left to right) until $b$ transforms into $d$. 
    It can be seen in a similar way as in \cref{sec:valleys} that the output is again a permutation $\sigma_b$ that lies in $M_d$, and the descent sets of $\can (b, \delta_n)$ and $\can (d, \sigma_b)$ coincide. 
    This shows that $\{\Des(b,\delta_n) \mid b \in B_d\} \subseteq \{\Des(d, \sigma) \mid \sigma \in M_d\}$.
    
    To complete the proof, we show that $\Des(d, \sigma)$ matches the peaks of a path in $B_d$ for any $\sigma \in M_d$. 
    To do this, we start with $\sigma \in M_d$, label $d$ with $\sigma$, mark the non-descents using {(red)} dots, and (try to) construct a {(red)} Dyck path $b$ whose peaks exactly match the dots (see \Cref{fig:constBd}). 
    Note that the labels between any two consecutive dots form a decreasing sequence.

    \begin{figure}[h]
        \centering
        \begin{tikzpicture}[scale = 0.75]
            \draw[dotted] (0, 0) grid (10, 3);
            \draw[thick, blue] (0, 0) -- (1, 1) -- (2, 2) -- (3, 1) -- (4, 2) -- (5, 3) -- (6, 2) -- (7, 1) -- (8, 2) -- (9, 1) -- (10, 0);
            \node at (0 + 0.4, 0 + 0.75) {\scriptsize 4};
            \node at (1 + 0.4, 1 + 0.75) {\scriptsize 3};
            \node at (2 + 0.6, 2 - 0.25) {\scriptsize 4};
            \node at (3 + 0.4, 1 + 0.75) {\scriptsize 2};
            \node at (4 + 0.4, 2 + 0.75) {\scriptsize 5};
            \node at (5 + 0.6, 3 - 0.25) {\scriptsize 3};
            \node at (6 + 0.6, 2 - 0.25) {\scriptsize 2};
            \node at (7 + 0.4, 1 + 0.75) {\scriptsize 1};
            \node at (8 + 0.6, 2 - 0.25) {\scriptsize 5};
            \node at (9 + 0.6, 1 - 0.25) {\scriptsize 1};

            \node [circle, fill = red, inner sep = 2pt] at (2, 2) {};
            \node [circle, fill = red, inner sep = 2pt] at (4, 2) {};
            \node [circle, fill = red, inner sep = 2pt] at (8, 2) {};
        \end{tikzpicture}\\
        $\downarrow$\\[0.2cm]
        \begin{tikzpicture}[scale = 0.75]
            \draw[dotted] (0, 0) grid (10, 3);
            \draw[thick, blue!50] (0, 0) -- (1, 1) -- (2, 2) -- (3, 1) -- (4, 2) -- (5, 3) -- (6, 2) -- (7, 1) -- (8, 2) -- (9, 1) -- (10, 0);
            \node at (0 + 0.4, 0 + 0.75) {\scriptsize 4};
            \node at (1 + 0.4, 1 + 0.75) {\scriptsize 3};
            \node at (2 + 0.6, 2 - 0.25) {\scriptsize 4};
            \node at (3 + 0.4, 1 + 0.75) {\scriptsize 2};
            \node at (4 + 0.4, 2 + 0.75) {\scriptsize 5};
            \node at (5 + 0.6, 3 - 0.25) {\scriptsize 3};
            \node at (6 + 0.6, 2 - 0.25) {\scriptsize 2};
            \node at (7 + 0.4, 1 + 0.75) {\scriptsize 1};
            \node at (8 + 0.6, 2 - 0.25) {\scriptsize 5};
            \node at (9 + 0.6, 1 - 0.25) {\scriptsize 1};

            \node [circle, fill = red, inner sep = 2pt] at (2, 2) {};
            \node [circle, fill = red, inner sep = 2pt] at (4, 2) {};
            \node [circle, fill = red, inner sep = 2pt] at (8, 2) {};

            \draw[red, thin] (0, 0) -- (1, 1) -- (2, 2) -- (3, 1) -- (4, 2) -- (5, 1) -- (6, 0) -- (7, 1) -- (8, 2) -- (9, 1) -- (10, 0);
        \end{tikzpicture}
        \caption{Constructing \textcolor{red}{$b$} from 
    \textcolor{blue}{$d$} using the non-descents of $\can(d,\sigma)$.}
        \label{fig:constBd}
    \end{figure}
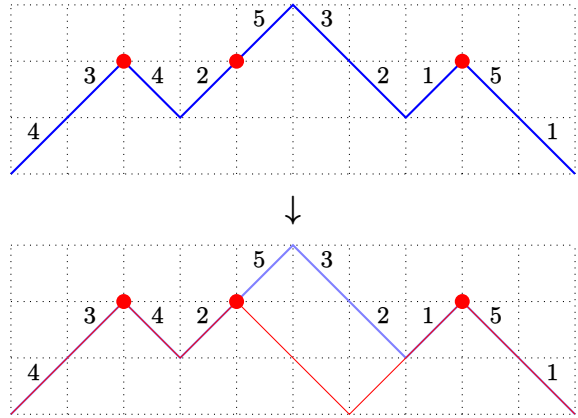

    We need to check that it is possible to construct $b$, and then confirm that it lies in $B_d$. 
    Note that since $\sigma \in M_d$, if $b$ exists, we must have $\pk b = \bpk d$ and $b$ must have exactly one peak in each bounce factor of $d$ except the last bounce factor. 
    More precisely, in terms of the dots corresponding to non-descents, there must be a dot at the end of precisely one of the steps in each bounce factor of $d$, apart from the last factor. 
    We first show that $b$ exists. 
    We only have to show that it is possible to reach from one dot to the next using a sequence of down-steps followed by a sequence of up-steps.

    Let $P$ be a dot corresponding to a non-descent and $Q$ be the dot following it. 
    Let $l_P$ be the sequence of down-steps to the $x$-axis starting from $P$. 
    Also, set $l_Q$ to be the sequence of up-steps that starts at the $x$-axis and ends at $Q$. 
    By the conditions mentioned above, $Q$ cannot lie on $l_P$ and $P$ cannot lie on $l_Q$. 
    Hence, the Dyck path $b$ exists if $l_P$ and $l_Q$ meet at some point. 
    This not being the case corresponds to $l_P$ touching the $x$-axis at a point before $l_Q$ (see \Cref{fig:bexists}). 
    But this would mean that there is an up-step and its corresponding down-step between $P$ and $Q$ (see $\star$ in \Cref{fig:bexists}). 
    This would imply that $\can(d, \sigma)$ will have another non-descent between $P$ and $Q$, which is a contradiction. 
    Hence, there exists a Dyck path $b$ whose peaks match the dots on $d$.

    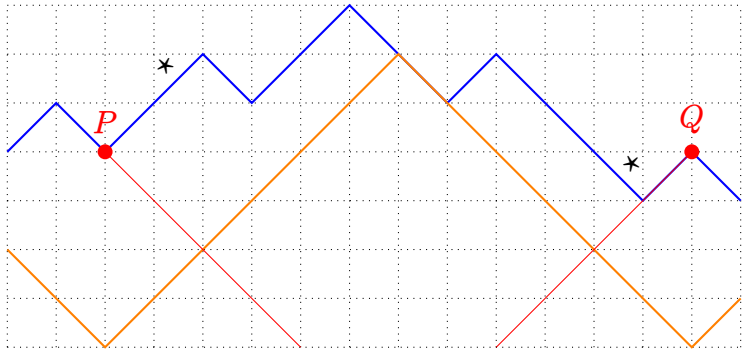
\begin{figure}[H]
        \centering
        \begin{tikzpicture}[scale = 0.65]
            \draw[dotted] (2, -2) grid (17, 5);
            \draw[thick, blue] (2, 2) -- (3, 3) -- (4, 2) -- (5, 3) -- node[above, sloped] {\color{black} $\star$} (6, 4) -- (7, 3) -- (8, 4) -- (9, 5) -- (10, 4) -- (11, 3) -- (12, 4) -- (13, 3) -- (14, 2) -- node[above, sloped] {\color{black} $\star$} (15, 1) -- (16, 2) -- (17, 1);
            \begin{scope}[shift = {(-2,-2)}]
                \draw[thick, orange] (4, 2) -- (5, 1) -- (6, 0) -- (7, 1) -- (8, 2) -- (9, 3) -- (10, 4) -- (11, 5) -- (12, 6) -- (13, 5) -- (14, 4) -- (15, 3) -- (16, 2) -- (17, 1) -- (18, 0) -- (19, 1);
            \end{scope}
            \node[circle, inner sep = 2pt, fill = red, label=above:$\textcolor{red}{P}$] at (4, 2) {};
            \node[circle, inner sep = 2pt, fill = red, label=above:$\textcolor{red}{Q}$] at (16, 2) {};
            \draw[thin, red] (4, 2) -- (8, -2);
            \draw[thin, red] (12, -2) -- (16, 2);
        \end{tikzpicture}
        \caption{If $l_P$ and $l_Q$ do not meet, there is a pair of corresponding steps between $P$ and $Q$ (one such pair is marked using $\star$).}
        \label{fig:bexists}
    \end{figure}

    We now show that $b$ is in $B_d$. 
    Just as before, set $P$ and $Q$ to be consecutive peaks of $b$. 
    We define $l_P$ (resp. $l_Q$) to be the down-steps (resp. up-steps) of $b$ between $P$ and $Q$. 
    We have to show that either $l_P$ and $l_Q$ intersect at the $x$-axis, or that the steps of $l_P$ and $l_Q$ are also steps of $d$. 
    Let $F_P$ be the bounce factor of $d$ containing the step whose endpoint is $P$ and similarly define $F_Q$. 
    We consider two cases based on whether or not there is an up-step of $F_P$ between $P$ and $Q$. 
    We first assume that there is. 
    By way of contradiction, assume that $l_P$ and $l_Q$ meet at a point above the $x$-axis (see \Cref{fig:Pupstepbetween}).

    \begin{figure}[H]
        \centering
        \begin{tikzpicture}[scale = 0.65]
            \draw[dotted] (2, -2) grid (17, 5);
            \draw[thick, blue] (2, 2) -- (3, 3) -- (4, 2) -- (5, 3) -- (6, 4) -- (7, 3) -- (8, 4) -- (9, 5) -- (10, 4) -- (11, 3) -- (12, 4) -- (13, 3) -- (14, 4) -- (15, 3) -- (16, 2) -- (17, 1);
            \begin{scope}[shift = {(-2,-2)}]
                \draw[thick, orange] (4, 2) -- (5, 1) -- (6, 0) -- node[above,sloped] {\color{black}\tiny $a_1$}(7, 1) -- node[above,sloped] {\color{black}\tiny $a_2$}(8, 2) -- node[above,sloped] {\color{black}\tiny $a_3$}(9, 3) -- node[above,sloped] {\color{black}\tiny $a_4$}(10, 4) -- node[above,sloped] {\color{black}\tiny $a_5$}(11, 5) -- node[above,sloped] {\color{black}\tiny $a_6$}(12, 6) -- node[above,sloped] {\color{black}\tiny $a_1$}(13, 5) -- node[above,sloped] {\color{black}\tiny $a_2$}(14, 4) -- node[above,sloped] {\color{black}\tiny $a_3$}(15, 3) -- node[above,sloped] {\color{black}\tiny $a_4$}(16, 2) -- node[above,sloped] {\color{black}\tiny $a_5$}(17, 1) -- node[above,sloped] {\color{black}\tiny $a_6$}(18, 0) -- (19, 1);
            \end{scope}
            \node[circle, inner sep = 2pt, fill = red, label=above:$\textcolor{red}{P}$] at (6, 4) {};
            \node[circle, inner sep = 2pt, fill = red, label=above:$\textcolor{red}{Q}$] at (15, 3) {};
            \draw[thin, red] (6, 4) -- (11, -1) -- (15, 3);
        \end{tikzpicture}
        \caption{Case when there is an up-step of $F_P$ after $P$ and the \textcolor{red}{valley} of $b$ between $P$ and $Q$ is not on the $x$-axis.}
        \label{fig:Pupstepbetween}
    \end{figure}
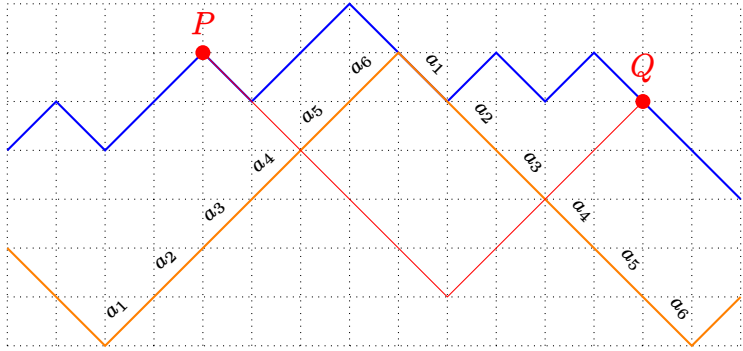

    Instead of writing the labels of $\sigma$ on $d$, we write them on $\bounce d$ as in \Cref{fig:Pupstepbetween}, in order to make arguments in the sequel clearer. 
    Let the labels on the primitive factor of $\bounce d$ whose peak is between $P$ and $Q$ be $a_1, a_2, \ldots, a_k$. 
    Suppose the line $l_P$ breaks the sequence of labels on the up-steps of $\bounce d$ between $a_j$ and $a_{j + 1}$ (such a $j$ exists since there is an up-step of $F_P$ after $P$). 
    Since $l_P$ and $l_Q$ meet above the $x$-axis, the line $l_Q$ must break the sequence of labels (on the down-steps) between $a_i$ and $a_{i + 1}$ for some $i < j$. 
    For the example in \Cref{fig:Pupstepbetween}, we have $i = 3$ and $j = 4$. 
    Since the labels $a_1, a_2, \ldots, a_j$ appear in order before $P$ in $F_P$, we get $a_1 > a_2 > \cdots > a_j$. 
    Similarly, looking at the down-steps after $Q$ in $F_Q$, we get $a_{i + 1} > a_{i + 2} > \cdots > a_k$. 
    This gives us $a_1 > a_2 > \cdots > a_k$. 
    But looking at the part of the canon permutation between $P$ and $Q$ gives us $a_k > a_1$, which is a contradiction.

    \begin{figure}[H]
        \centering
        \begin{tikzpicture}[scale = 0.6]
            \draw[dotted] (3, -3) grid (20, 5);
            \draw[thick, blue] (3, 3) -- (4, 4) -- (5, 3) -- (6, 4) -- (7, 5) -- (8, 4) -- (9, 3) -- (10, 2) -- (11, 1) -- (12, 2) -- (13, 3) -- (14, 2) -- (15, 3) -- (16, 2) -- (17, 1) -- (18, 2) -- (19, 1) -- (20, 0);
            \begin{scope}[shift = {(3, -3)}]
                \draw[thick, orange] (0, 0) -- (1, 1) -- (2, 2) -- (3, 3) -- node[above,sloped] {\color{black}\tiny $a_1$} (4, 4) -- node[above,sloped] {\color{black}\tiny $a_2$} (5, 5) -- (6, 6) -- (7, 5) -- (8, 4) -- (9, 3) -- node[above,sloped] {\color{black}\tiny $a_1$} (10, 2) -- node[above,sloped] {\color{black}\tiny $a_2$} (11, 1) -- (12, 0) -- node[above,sloped] {\color{black}\tiny $a_3$} (13, 1) -- (14, 2) -- (15, 3) -- (16, 4) -- node[above,sloped] {\color{black}\tiny $a_3$} (17, 3);
            \end{scope}
            \node [circle, fill = red, inner sep = 2pt, label = $\textcolor{red}{P}$] at (8, 4) {};
            \node [circle, fill = red, inner sep = 2pt, label = $\textcolor{red}{Q}$] at (16, 2) {};
            \draw[thin, red] (8, 4) -- (13, -1) -- (16, 2);
        \end{tikzpicture}
        \caption{Case when there are no up-steps of $F_P$ after $P$ and the \textcolor{red}{valley} of $b$ between $P$ and $Q$ is neither on the $x$-axis nor a valley of~$d$.}
        \label{fig:Pnoupstepbetween}
    \end{figure}
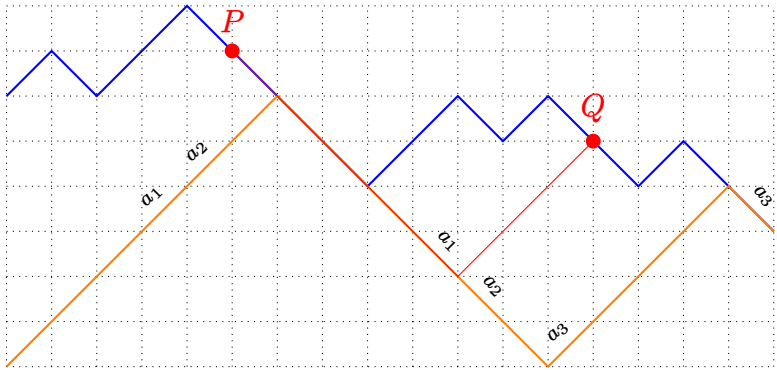

    Next, suppose there are no up-steps of $F_P$ between $P$ and $Q$. 
    By way of contradiction, assume that $l_P$ and $l_Q$ meet above the $x$-axis and there is a peak of $d$ between $P$ and $Q$. 
    Let $a_1, a_2$ be the labels where $l_Q$ splits the down-steps of the primitive factor of $\bounce d$ between $P$ and $Q$ (see \Cref{fig:Pnoupstepbetween}). 
    Let $a_3$ be the label of the first up-step in $F_Q$. 
    Looking at the steps before $P$ in $F_P$, we get $a_1 > a_2$. 
    Looking at the steps between $P$ and $Q$, we get $a_3 > a_1$. 
    However, the part of the canon permutation after $Q$ and before the next peak of $b$ has a copy of $a_2$ before one of $a_3$. 
    Since $a_2 < a_3$, there is a non-descent in $F_Q$ apart from $Q$, which is a contradiction.
\end{proof}

The theorem above characterizes the possible descent sets of $\can(d, \sigma)$ for $\sigma \in M_d$. 
We can use this to view the permutations in $M_d$ as linear extensions of certain posets. 
For any $d \in \Dyck_n$ and $b \in B_d$, we construct the poset $P = P_{d, b}$ on the symbols $\{a_1, \ldots, a_n\}$. 
We start by labeling $d$ using the identity permutation to get $\can(d, 12 \cdots n)$ and replacing the label $i$ with $a_i$. 
We now set $a_i >_P a_j$ if the labels $a_i$ and $a_j$ appear between two consecutive peaks of $b$ with $a_i$ appearing before (i.e., to the left of) $a_j$. 
The partial order on $P$ is the transitive closure of this relation. 
Note that $P$ is a valid poset by \Cref{Dessetchar}.

\begin{example}
    For $d = U(UD)^4D$ and $b = U^2D^2UDU^2D^2 \in B_d$, the order relations of $P = P_{d, b}$ imposed by peaks of $b$ are
    \begin{equation*}
        a_1 >_P a_2, \ \ a_1 >_P a_3 >_P a_2, \ \ a_4 >_P a_3 >_P a_5, \text{ and } a_4 \mathrel{{}_P{>}} a_5.
    \end{equation*}
    The Hasse diagram of $P$ is given in \Cref{fig:posetex}.
\end{example}

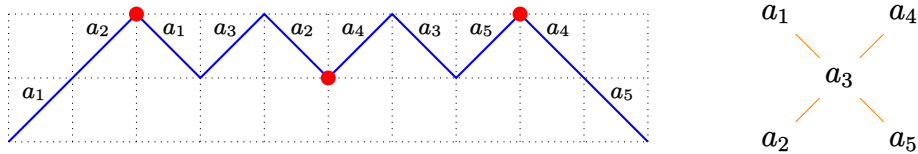
\begin{figure}[H]
    \centering
    \begin{tikzpicture}[scale = 0.85]
        \draw[dotted] (0, 0) grid (10, 2);
        \draw[thick, blue] (0, 0) -- (1, 1) -- (2, 2) -- (3, 1) -- (4, 2) -- (5, 1) -- (6, 2) -- (7, 1) -- (8, 2) -- (9, 1) -- (10, 0);
        \node[circle, fill = red, inner sep = 2pt] at (2, 2) {};
        \node[circle, fill = red, inner sep = 2pt] at (5, 1) {};
        \node[circle, fill = red, inner sep = 2pt] at (8, 2) {};

        \node at (0 + 0.4, 0 + 0.75) {\scriptsize $a_1$};
        \node at (1 + 0.4, 1 + 0.75) {\scriptsize $a_2$};
        \node at (2 + 0.6, 2 - 0.25) {\scriptsize $a_1$};
        \node at (3 + 0.4, 1 + 0.75) {\scriptsize $a_3$};
        \node at (4 + 0.6, 2 - 0.25) {\scriptsize $a_2$};
        \node at (5 + 0.4, 1 + 0.75) {\scriptsize $a_4$};
        \node at (6 + 0.6, 2 - 0.25) {\scriptsize $a_3$};
        \node at (7 + 0.4, 1 + 0.75) {\scriptsize $a_5$};
        \node at (8 + 0.6, 2 - 0.25) {\scriptsize $a_4$};
        \node at (9 + 0.6, 1 - 0.25) {\scriptsize $a_5$};

        \node (2) at (12, 0) {$a_2$};
        \node (3) at (13, 1) {$a_3$};
        \node (5) at (14, 0) {$a_5$};
        \node (1) at (12, 2) {$a_1$};
        \node (4) at (14, 2) {$a_4$};
        \draw[orange] (2) -- (3) -- (1);
        \draw[orange] (5) -- (3) -- (4);
    \end{tikzpicture}
    \caption{The Dyck path \textcolor{blue}{$d$} with \textcolor{red}{peaks of $b$} marked, and the associated poset \textcolor{orange}{$P_{d, b}$} on the right.}
    \label{fig:posetex}
\end{figure}

A \emph{linear extension} of a poset $P$ on $\{a_1, \ldots, a_n\}$ is a permutation $\sigma \in \s_n$ such that $\sigma_i < \sigma_j$ whenever $a_i <_P a_j$. 
We use $\mathcal{L}(P)$ to denote the set of linear extensions of $P$. 
For the poset $P$ from \Cref{fig:posetex}, we have $\mathcal{L}(P) = \{4 1 3 5 2, 4 2 3 5 1, 5 1 3 4 2, 5 2 3 4 1\}$.

For $d \in \Dyck_n$ and $b \in B_d$, the definition of $P_{d, b}$ gives us that
\begin{equation*}
    \mathcal{L}(P_{d, b}) = \{\sigma \in M_d \mid \Des(d, \sigma) = \Des(b, \delta_n)\}.
\end{equation*}
Also, since a Dyck path in $B_d$ is determined by the position of its peaks, these sets are mutually disjoint (and non-empty by \Cref{Dessetchar}). 
This gives us the following.

\begin{corollary}\label{LEplusLB}
    For any Dyck path $d$, we have the partition
    \begin{equation*}
        M_d = \bigsqcup_{b \in B_d} \mathcal{L}(P_{d, b}).
    \end{equation*}
    This gives the lower bound $m_d \geq |B_d|$.
\end{corollary}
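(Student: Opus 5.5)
The plan is to derive the partition directly from \Cref{Dessetchar} together with the displayed identity $\mathcal{L}(P_{d,b}) = \{\sigma \in M_d \mid \Des(d,\sigma) = \Des(b,\delta_n)\}$. I will then read off the lower bound, which should concern the leading coefficient $[t^{m_d}]C_d(t) = |M_d|$. The written inequality "$m_d \geq |B_d|$" is evidently meant as $|M_d| \geq |B_d|$.

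\emph{Covering.} Take any $\sigma \in M_d$. By \Cref{Dessetchar} there is some $b \in B_d$ with $\Des(d,\sigma) = \Des(b,\delta_n)$, and by the displayed identity this gives $\sigma \in \mathcal{L}(P_{d,b})$. Conversely, each $\mathcal{L}(P_{d,b})$ is contained in $M_d$ by that same identity. Hence $M_d = \bigcup_{b\in B_d}\mathcal{L}(P_{d,b})$.

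\emph{Disjointness.} Suppose $\sigma$ lies in both $\mathcal{L}(P_{d,b})$ and $\mathcal{L}(P_{d,b'})$. Then $\Des(b,\delta_n) = \Des(d,\sigma) = \Des(b',\delta_n)$. The descent set of $\can(b,\delta_n)$ is the complement in $[2n-1]$ of the set of peak positions of $b$, since the non-descents of $\can(b,\delta_n)$ are exactly the peaks, as in the proof of \Cref{cor: descreasing is maximizer}. So $b$ and $b'$ have the same peak positions. A Dyck path is determined by its set of peak positions together with the requirement that it descends and re-ascends between consecutive peaks. Hence $b = b'$.

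\emph{Nonemptiness and the bound.} This is the only genuinely substantive step. For each $b \in B_d$, the first half of the proof of \Cref{Dessetchar} (the valley-swapping extension of \Cref{valgo} starting from $b$) produces $\sigma_b \in M_d$ with $\Des(d,\sigma_b) = \Des(b,\delta_n)$, so $\mathcal{L}(P_{d,b}) \neq \emptyset$. The disjoint union then gives $|M_d| = \sum_{b\in B_d} |\mathcal{L}(P_{d,b})| \geq |B_d|$, which is the claimed lower bound on the leading coefficient. The main point to double-check is that $P_{d,b}$ is a genuine poset, i.e.\ that its transitive closure has no cycles. This is guaranteed precisely because $\sigma_b$ realizes all the imposed strict inequalities, so nonemptiness and validity of the poset come together.
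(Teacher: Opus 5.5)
Your route matches the paper's. You get covering from \Cref{Dessetchar} together with the identity $\mathcal{L}(P_{d,b}) = \{\sigma \in M_d \mid \Des(d,\sigma) = \Des(b,\delta_n)\}$. You get nonemptiness, and with it the validity of $P_{d,b}$, from the valley-swapping half of \Cref{Dessetchar}. You are also right that the bound must be read as $|M_d| \geq |B_d|$, since Section 5 uses $m_d$ for the leading coefficient.

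The gap is in your disjointness step. The claim that a Dyck path is determined by its set of peak positions is false. For instance, $UUDDUDUUDD$ and $UUDUUDDUDD$ both have peaks exactly at positions $2,5,8$. So $\Des(b,\delta_5) = \{1,3,4,6,7,9\}$ for both choices of $b$, yet the paths differ. Your clause about descending and re-ascending between consecutive peaks does not rescue this, since every Dyck path does that. What the positions fail to fix is the height of each peak, equivalently how the gap between consecutive peak positions splits into down-steps and up-steps.

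The missing ingredient is the defining condition of $B_d$ that every peak of $b$ touches $d$. Take $b, b' \in B_d$ with the same peak positions. Their peak apexes are then the same points of $d$, because the height at each position is read off from $d$. A Dyck path is determined by its peak points. Between consecutive apexes $(x,h)$ and $(x',h')$ the path is $D^yU^z$ with $y+z = x'-x$ and $z-y = h'-h$, which forces $y$ and $z$. The path rises straight to the first apex and falls straight from the last one to the axis. This is exactly what the paper's phrase ``a Dyck path in $B_d$ is determined by the position of its peaks'' relies on. With this repair your argument is complete.
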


From the proofs of \Cref{md,thm:vperm}, we have that for any Dyck path $d$, both $\bperm d$ and $\vperm d$ are in $\mathcal{L}(P_{d, \bounce d})$.

\begin{example}
    For $d = U^2DUD^2U^2D^2$, we have $B_d = \{b_1, b_2, d\}$ where
    \begin{equation*}
        b_1 = UD(U^2D^2)^2 \text{ and } b_2 = U^2D^2UDU^2D^2.
    \end{equation*}
    The corresponding posets are shown in \Cref{fig:LEpartex} and we have
    \begin{align*}
        M_d &= \mathcal{L}(P_{d, b_1}) \sqcup \mathcal{L}(P_{d, b_2}) \sqcup \mathcal{L}(P_{d, d})\\
        &= \{4 5 3 2 1\} \sqcup \{5 1 4 3 2, 5 2 4 3 1, 5 3 4 2 1\} \sqcup\{5 4 3 2 1\}.
    \end{align*}
\end{example}

\begin{figure}[H]
    \centering



    \begin{tikzpicture}[scale = 0.9]
        \node (5) at (12, -1) {$a_5$};
        \node (4) at (12, 0) {$a_4$};
        \node (3) at (12, 1) {$a_3$};
        \node (1) at (12, 2) {$a_1$};
        \node (2) at (12, 3) {$a_2$};

        \draw[orange] (5) -- (4) -- (3) -- (1) -- (2);

        \node at (12, -2) {$P_{d, b_1}$};
    \end{tikzpicture}\hspace{1cm}
    \begin{tikzpicture}[scale = 0.9]
        \node (5) at (12, -1) {$a_5$};
        \node (4) at (12, 0) {$a_4$};
        \node (3) at (11, 1) {$a_3$};
        \node (1) at (11, 2) {$a_1$};
        \node (2) at (10, 0) {$a_2$};

        \draw[orange] (5) -- (4) -- (3) -- (2);
        \draw[orange] (3) -- (1);

        \node at (11, -2) {$P_{d, b_2}$};
    \end{tikzpicture}\hspace{1cm}
    \begin{tikzpicture}[scale = 0.9]
        \node (5) at (12, -1) {$a_5$};
        \node (4) at (12, 0) {$a_4$};
        \node (3) at (12, 1) {$a_3$};
        \node (1) at (12, 2) {$a_2$};
        \node (2) at (12, 3) {$a_1$};

        \draw[orange] (5) -- (4) -- (3) -- (1) -- (2);

        \node at (12, -2) {$P_{d, d}$};
    \end{tikzpicture}
    \caption{Posets $P_{d, b}$ for all $b \in M_d$.}
    \label{fig:LEpartex}
\end{figure}
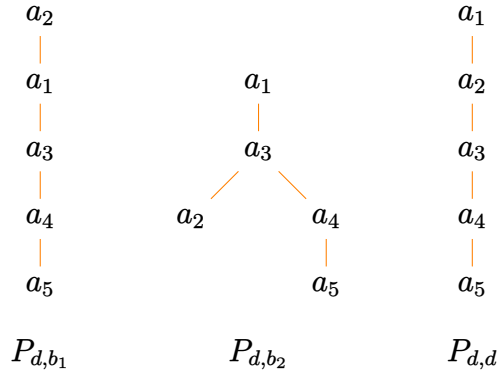

\begin{remark}
    Even though \Cref{LEplusLB} gives a method to generate the permutations in $M_d$, it is not easy to implement in practice. 
    This is mainly because we have to construct posets and compute their linear extensions. 
    Hence, it would still be preferable to find more direct algorithms such as \Cref{algo,valgo} to generate the permutations in $M_d$.
\end{remark}

An interesting problem is to characterize those Dyck paths $d$ such that $m_d = 1$. 
Although we do not solve this (see item \ref{md=1} in \Cref{sec:future}), we do characterize a related class of Dyck paths. 
Note that by \Cref{Dessetchar}, if $m_d = 1$, then we must have $|B_d| = 1$ (in fact $B_d = \{\bounce d\}$). 
We characterize when this happens.

\begin{proposition}\label{|Bd|=1}
    For any Dyck path $d$, we have $|B_d| = 1$ if and only if each peak of $\bounce d$ coincides with a peak of $d$. 
    The number of such Dyck paths graded by semilength is given by \cite[\href{https://oeis.org/A287709}{A287709}]{oeis}.
\end{proposition}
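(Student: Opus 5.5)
We plan to prove both directions of the characterization directly from the definition of $B_d$, using that $\bounce d \in B_d$ always. Write the bounce factors of $d$ as $F_1, \dots, F_k$ with $k = \bpk d$, and let the $i$-th peak of $\bounce d$ have its down-step $s_i$, which is a down-step of $d$ ending $F_i$ and preceded in $\bounce d$ by an up-step.

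\emph{If some peak of $\bounce d$ is not a peak of $d$.} Take such an $i$ with $i<k$ (the last peak of $\bounce d$ ends a maximal run of down-steps and must be a peak of $d$ whenever the preceding bounce factor's construction forces it; we will check this carefully, and if not we use the reverse bounce path or the leftmost failure). Then the step of $d$ immediately before $s_i$ is a down-step, since $s_i$ is the first down-step encountered on that ascending run of $\bounce d$ only in the sense of the bounce path, while $d$ itself descends there. We construct a second path $b \in B_d$: follow $\bounce d$ up to its $(i-1)$-st return to the axis, then instead of climbing to the peak at $s_i$, climb to touch $d$ at an earlier peak of $d$ inside $F_i$ (one exists, since $d$ has a peak before $s_i$ within the factor, namely the last peak of $d$ before $s_i$), descend to the axis, and continue with $\bounce d$. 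We must verify the three conditions: $b$ stays under $d$ (climbing from the axis along a straight up-run toward a point of $d$ stays below $d$ by the choice of the last peak and the bounce construction), the peak count is unchanged so $\pk b=\bpk d$ with all peaks touching $d$, and all new valleys are at height $0$. Then $b \neq \bounce d$, so $|B_d|\ge 2$. The main obstacle is ensuring the new up-run from the axis actually meets $d$ exactly at a peak and that the subsequent descent reaches height $0$ before the next up-run of $\bounce d$; an alternative cleaner choice is to shift the peak \emph{later}: note a peak of $d$ exists at or after $s_i$ in $F_{i+1}$, and we may instead take the rightmost valid touching point. We will pick whichever of these makes the verification immediate.

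\emph{If every peak of $\bounce d$ is a peak of $d$.} Let $b\in B_d$ with peaks $P_1,\dots,P_k$. By the argument in the proof of \Cref{Dessetchar} (via $\Des(b,\delta_n)$ being a maximizer's descent set), each $P_j$ is the endpoint of a step in a distinct bounce factor among $F_1,\dots,F_{k-1}$, plus a final one. We show by induction on $j$ that $P_j$ is the $j$-th bounce peak: since $b$ is under $d$ and its peaks touch $d$, and from the axis $b$ climbs straight up, the first point of $d$ reached by an up-run from the previous return is exactly the bounce peak; a peak of $b$ strictly before it inside $F_j$ would need $d$ to have a peak there reachable from the axis, which contradicts the bounce peak being a peak of $d$ where the up-run first meets $d$; a peak later would force the valley condition to fail (a valley not on the axis and not a valley of $d$). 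Hence $b=\bounce d$.

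Finally, for the enumeration we use the product formula from the proof of \Cref{cor: descreasing is maximizer}: with $\bcomp d=(c_1,\dots,c_k)$, the condition fixes the step before each bounce peak to be an up-step, reducing each rectangle to lattice paths with one fewer step, giving $\sum_c \prod_{i=1}^{k-1}\binom{c_i+c_{i+1}-2}{c_{i+1}-1}$; we would then match its generating function with \cite[\href{https://oeis.org/A287709}{A287709}]{oeis}.
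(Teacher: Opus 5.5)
Both directions of your argument have genuine gaps; only the counting formula matches the paper.

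\textbf{Necessity.} The construction you describe is geometrically impossible. Along $d$ the quantity $x-h$ is weakly increasing: it is constant on up-steps and increases by $2$ on down-steps. Suppose the $i$-th peak $p_i$ of $\bounce d$ is not a peak of $d$. Then the step of $d$ into $p_i$ is a down-step, so the straight up-run of $\bounce d$ from its $(i-1)$-st return lies strictly below $d$ and touches it \emph{only} at $p_i$. Hence no earlier peak of $d$ can be reached from that return point. Climbing past $p_i$ leaves the region under $d$, and a later touching point would need a later return. Either way, moving one peak forces moving the previous one, and so on.

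Your parenthetical about the last peak is also backwards. The first peak of $\bounce d$ is always a peak of $d$, whereas the last one can fail. For $d=UUDUDD$ one has $\bounce d=UUDDUD$, whose only peak not at a peak of $d$ is the last one, $(5,1)$. So no $i<k$ is available, and the up-run from $(4,0)$ cannot reach the peak $(4,2)$ of $d$; here $B_d=\{UUDDUD,\,UUDUDD,\,UDUUDD\}$.

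The fallback you mention, the reverse bounce path, is exactly the paper's argument. However, it needs two facts you do not supply.
\begin{enumerate}
\item It lies in $B_d$. This requires its number of peaks to be $\bpk d$. Reversing and then complementing $\can(d,\sigma)$ gives a canon permutation of the reversed path $\bar d$ with the same number of descents. So $m_{\bar d}=m_d$, and \Cref{md} gives $\bpk \bar d=\bpk d$.
\item It differs from $\bounce d$. A peak common to both is followed by a down-step of $d$ (by the bounce construction) and preceded by an up-step of $d$ (by the reverse construction), hence is a peak of $d$.
\end{enumerate}

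\textbf{Sufficiency.} Your induction assumes that $b$ returns to the axis between consecutive peaks, which ignores valleys of $b$ that are valleys of $d$ at positive height. It also asserts that the up-run from the previous return first meets $d$ exactly at the bounce peak. Under your hypothesis this is false. When $p_j$ is a peak of $d$, the up-run can meet $d$ earlier and then run along the up-run of $d$ into $p_j$, and a peak of $b$ only needs to touch $d$, not be a peak of $d$. For example, $d=U^3D^2UDU^2D^3$ satisfies the hypothesis with $\bounce d=U^3D^3U^3D^3$. Yet the up-run from $(6,0)$ already meets $d$ at $(7,1)$, and $b$ could descend from $(3,3)$ only as far as the valley $(5,1)$ of $d$. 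The same issue affects your base case $P_1=p_1$.

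What excludes these configurations is not a local argument but the paper's global propagation argument. The quantity $x+h$ is also weakly increasing along $d$, and $p_j$ is preceded by an up-step of $d$. So a peak $P_j$ of $b$ lying strictly before $p_j$ has strictly smaller $x+h$. Then the next up-run of $b$ lies on a line $x-h=\mathrm{const}$ with a smaller constant than that of $\bounce d$. This again forces $P_{j+1}$ to have smaller $x+h$ than $p_{j+1}$, and so on. Eventually $P_k$ cannot descend to $(2n,0)$, so $b$ needs a $(k+1)$-st peak, contradicting $\pk b=\bpk d$.

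\textbf{Enumeration.} Your product formula agrees with the paper's, but the identification with A287709 is only announced. The paper obtains it by matching the count with Dyck paths whose maximum height is attained at the first peak and which have $c_i$ up-steps at height $i$, as in \Cref{bpk=ht}.
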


\begin{proof}
    To prove the necessity of the condition on peaks of $\bounce d$, we consider the reverse bounce path associated to $d$. 
    As mentioned before, this is the reverse of the bounce path associated to the reverse of $d$. 
    It can also be viewed as constructing the bounce path from the end of $d$ instead of the beginning. 
    If there is a peak of $\bounce d$ that doesn't coincide with a peak of $d$, then the reverse bounce path will be different from the bounce path. 
   The reverse bounce path of $d$ also lies in $B_d$. 
    Hence, if $|B_d| = 1$, we must have that the peaks of $\bounce d$ coincide with (some) peaks of $d$.

    In terms of bounce factors, the condition that each peak of $\bounce d$ coincides with a peak of $d$ corresponds to the fact that each bounce factor, apart from the last one, should end with an up-step. 
    Using similar ideas to the proof of \Cref{Dessetchar}, this implies that each peak of a Dyck path $b$ in $B_d$ must be at the end of a bounce factor (and hence $b = \bounce d$). 
    If there is a peak of $b$ not at the end of a bounce factor, it can be shown that $b$ will also have a peak in the last bounce factor, which would imply that it has more than $\bpk d$ peaks. 
    This contradicts the condition of lying in $B_d$.
    
    We now turn to counting these Dyck paths. 
    Note that for any $c = (c_1, c_2, \ldots, c_k)$ it can be shown that
    \begin{equation*}
        \prod_{i = 1}^{k - 1} \binom{c_i + c_{i + 1} - 2}{c_{i + 1} - 1}
    \end{equation*}
    is the number of Dyck paths $d$ such that $\bcomp d = c$ and each peak of $\bounce d$ matches a peak of $d$ (using the same ideas as in \Cref{cor: descreasing is maximizer}). 
    It can also be shown that this matches the number of Dyck paths where the maximum height in the path is attained by the first peak and there are $c_i$ up-steps at height $i$ for all $i \in [k]$ (see \Cref{bpk=ht}). 
    This proves that the Dyck paths we consider are counted by \cite[\href{https://oeis.org/A287709}{A287709}]{oeis} since the first comment in the OEIS entry states that the $n$-th term of the sequence counts Dyck paths of semilength $n$ where the maximum height in the path is attained by its first peak.
\end{proof}

Even though $m_d = 1$ implies that $|B_d| = 1$, the converse is not true. 
For example, $d = U(UD)^5D$ satisfies $|B_d|=1$ (since its bounce path has peaks that match peaks of $d$). 
However, $m_d = 2$ since $M_d = \{635241, 645231\}$.

There is another question which is related to determining the Dyck paths for which $m_d = 1$. 
For a given $d \in \Dyck_n$ and $b \in B_d$, when is $P_{d, b}$ a chain? 
That is, when is there a unique $\sigma \in M_d$ such that $\Des(d, \sigma) = \Des(b, \delta_n)$? 
Since we have already determined when $|B_d| = 1$, a solution to this question would determine when $m_d = 1$.



\section{Future directions}\label{sec:future}

In this section, we gather some questions that would be of interest to explore.

\begin{enumerate}
    \item The first obvious direction would be to extend the results in this paper to general canon permutations \cite{sergi2}. 
    Just as Dyck paths can be used to capture the ``shuffle order'' for a canon permutation with $m = 2$ copies of each entry. 
    For general $m$, this is done by rectangular standard Young tableaux with $m$ columns (see \cite[Figure 2]{sergi2}). 
    Given a standard Young tableau $T$ with $n$ rows and $m$ columns and a permutation $\sigma \in \s_n$, we construct the canon permutation $\can(T, \sigma)$ by placing $\sigma_i$ at the indices specified by the $i$-th row of $T$.

    We can now try to obtain similar results for polynomials of the form
    \begin{equation*}
        C_T(t) \coloneqq \sum_{\sigma \in \s_n} t^{\des(\can(T, \sigma))}.
    \end{equation*}
    The symmetry for such polynomials follows just as for the $m = 2$ case. 
    However, these polynomials are \emph{not} free of internal zeroes in general. 
    For example, if
    \begin{center}
        \begin{tikzpicture}[scale = 0.8]
            \draw (0, 0) grid (3, -3);
            \node at (-0.75, -1.5) {$T=$};
            \begin{scope}[shift = {(0.5, -0.5)}]
                \node at (0, 0) {$1$};
                \node at (1, 0) {$4$};
                \node at (2, 0) {$7$};
                \node at (0, -1) {$2$};
                \node at (1, -1) {$5$};
                \node at (2, -1) {$8$};
                \node at (0, -2) {$3$};
                \node at (1, -2) {$6$};
                \node at (2, -2) {$9$};
            \end{scope}
            \node at (3.25, -1.5) {,};
        \end{tikzpicture}
    \end{center}
    then $C_T(t) = t^2 + 2t^3 + 2t^5 + t^6$, which has an internal zero.
    
    What can be said about the degree of $C_T$? 
    A first step might be to find the appropriate generalization of (peaks of) bounce paths for rectangular standard Young tableaux.

    \item For a Dyck path $d \in \Dyck_n$, we have described two main methods (and generalizations of them) to choose a permutation from $M_d$. 
    We gather some observations about the permutations generated by these methods.

    For any $n$, we define
    \begin{align*}
        \CanDy(n) &\coloneqq \bigcup_{d \in \Dyck_n} M_d\\[0.1cm]
        \CanDy_b(n) &\coloneqq \{\bperm d \mid d \in \Dyck_n\}\\[0.2cm]
        \CanDy_v(n) &\coloneqq \{\vperm d \mid d \in \Dyck_n\}.
    \end{align*}
    One property that the second and third classes seem to satisfy is
    \begin{equation*}
        |\CanDy_b(n)| = |\CanDy_v(n)| = |\Dyck_{n - 1}|.
    \end{equation*}
    It also seems that $\bperm$ (respectively, $\vperm$) has different outputs for different primitive Dyck paths. 
    The above statements would imply that any Dyck path has the same image under $\bperm$ (respectively, $\vperm)$ as some primitive Dyck path. 
    For the class $\CanDy(n)$, the sequence of sizes does not seem to be in the OEIS \cite{oeis}. 
    The first few terms of the sequence are $1, 1, 3, 9, 34, 152, 771, 4371, \ldots$.
    
    Also, the number of Dyck paths $d$, graded by semilength, where $\bperm d = \vperm d$ seems to match \cite[\href{https://oeis.org/A5773}{A5773}]{oeis}. 
    Note that the generalization of \Cref{algo} mentioned in \Cref{genbperm} seems to give a relation between $\bperm d$ and $\vperm d$.
    
    \item \label{md=1}As mentioned in \Cref{sec: interpret leader}, one could try to characterize the Dyck paths $d$ for which $m_d = 1$. 
    These are Dyck paths that have a unique way to label them to obtain a canon permutation with the maximum possible descents. 
    The number of such Dyck paths graded by semilength seems to match \cite[\href{https://oeis.org/A88456}{A88456}]{oeis} (verified for semilength $\leq 9$). 
    A bijective proof could prove insightful (we expect that bounce paths would correspond to the non-decreasing sequences of the type counted by \cite[\href{https://oeis.org/A88456}{A88456}]{oeis}). 
    Characterizing when $m_d=1$ is also interesting because the sequence of coefficients could potentially arise, for example, as an Ehrhart $h^*$-polynomial, similarly to the descent polynomial of canon permutations \cite{danai}.

    \item As mentioned earlier, Elizalde proved \cite[Theorem 2.6]{sergi1} that for any $\sigma \in \s_n$,
    \begin{equation*}
    \sum_{d \in \Dyck_n} t^{\des(d, \sigma)} = t^{\des(\sigma)} N_n(t)
    \end{equation*}
    where $N_n(t) = \sum\frac{1}{n}\binom{n}{r}\binom{n}{r + 1}t^r$ is a Narayana polynomial. 
    Are there similar expressions for $C_d(t)$ in terms of well-known polynomials? 
    For the `simplest' Dyck paths, we obtain expressions in terms of Eulerian polynomials. 
    Setting $A_n(t) = \sum_{\sigma \in \s_n} t^{\des(\sigma)}$, it is straightforward to check that $C_{(UD)^n}(t) = A_n(t)$. 
    To compute $C_{U^nD^n}(t)$, we consider the multivariate polynomial
    \begin{equation*}
        \tilde{A}_n(t, u) \coloneqq \sum_{\substack{\sigma \in \s_n\\\sigma_n < \sigma_1}} t^{\des(\sigma)} + u\sum_{\substack{\sigma \in \s_n\\\sigma_n > \sigma_1}} t^{\des(\sigma)}.
    \end{equation*}
    This polynomial has been studied in \cite[Proposition 6.4]{cyclic} where it is shown that
    \begin{equation*}
        \tilde{A}_n(t, u) = t^{n - 1}f(t^{-1}) + uf(t) \text{ where }f(t) \coloneqq \frac{d}{dt}tA_{n - 1}(t).
    \end{equation*}
    Since $C_{U^nD^n}(t) = \tilde{A}_n(t^2, t)$, the result above shows that this polynomial too can be expressed in terms of Eulerian polynomials.

\end{enumerate}

\section*{Acknowledgements}
We thank Emil Verkama and Rob Ezno for their valuable commments and providing us with CanDy. 
DD was partially supported by the
Wallenberg Autonomous Systems and Software Program (WASP) funded by the Knut and Alice
Wallenberg Foundation, by the Spanish--German
project COMPOTE (AEI $PCI2024-155081-2$ and DFG $541393733$), and by the
Spanish project PID$2022-137283$NB$\-\ $C$21$ of MCIN/AEI/$10.13039/501100011033$. 
KM is supported by the Göran Gustafsson Foundation and the Verg Foundation. 
We are also grateful to FindStat \cite{FindStat}, OEIS \cite{oeis}, and SageMath \cite{sagemath}.

\bibliographystyle{abbrv} 
\bibliography{refs} 

\end{document}